\numberwithin{equation}{section}
\newtheorem{theo}{Theorem}[section]
\newtheorem{lemme}[theo]{Lemma}
\newtheorem{prop}[theo]{Proposition}
\newtheorem{corol}[theo]{Corollary}
\newtheorem{defi}[theo]{Definition}
\newtheorem{remark}[theo]{Remark}
\newtheorem{example}[theo]{Example}
\newcommand{\ind}{\ensuremath{\mathds{1}}\xspace}
\newcommand{\E}{\ensuremath{\mathbb{E}}\xspace}
\newcommand{\N}{\ensuremath{\mathbb{N}}\xspace}
\newcommand{\R}{\ensuremath{\mathbb{R}}\xspace}
\newcommand{\CC}{\ensuremath{\mathcal{C}}\xspace}
\newcommand{\EE}{\ensuremath{\mathcal{E}}\xspace}
\newcommand{\FF}{\ensuremath{\mathcal{F}}\xspace}
\newcommand{\GG}{\ensuremath{\mathcal{G}}\xspace}
\newcommand{\HH}{\ensuremath{\mathcal{H}}\xspace}
\newcommand{\MM}{\ensuremath{\mathcal{M}}\xspace}
\newcommand{\NN}{\ensuremath{\mathcal{N}}\xspace}
\newcommand{\PP}{\ensuremath{\mathcal{P}}\xspace}
\renewcommand{\SS}{\ensuremath{\mathcal{S}}\xspace}
\newcommand{\TT}{\ensuremath{\mathcal{T}}\xspace}
\def\and{{\mathrm{{\rm and}}}}
\newcommand{\vf}[1]{\ensuremath{\stackrel{#1\rightharpoonup}{v}}\xspace}
\newcommand{\vb}[1]{\ensuremath{\stackrel{\leftharpoonup#1}{v}}\xspace}
\def\mathpal#1{\mathop{\mathchoice{\text{\rm #1}}%
   {\text{\rm #1}}{\text{\rm #1}}%
   {\text{\rm #1}}}\nolimits}
\def\Div{\mathpal{div}}
\def\id{\mathpal{id}}
\newenvironment{keywords}
 {\par \noindent\textit{Keywords:}\ \ignorespaces}
 {\par\vspace{3mm}} 
\title{\textsc{Brenier-Schrödinger problem on compact manifolds with boundary}}
\date{}
\author{\textsc{David Garc\'ia-Zelada and Baptiste Huguet}}
\begin{document}
\maketitle

%
%
%

%
%

\begin{abstract}\noindent
We consider
 the Brenier-Schrödinger problem on compact manifolds with boundary. In the spirit of
 a work by Arnaudon, Cruzeiro, Léonard and
 Zambrini, we study the kinetic property of 
 regular solutions and obtain 
 a link to the Navier-Stokes equations
with an impermeability condition. We also enhance the class of models for which the problem 
 admits a unique solution. 
This involves a method of taking quotients
by reflection groups for which we give
several examples.
\end{abstract}
\begin{keywords}
Brenier-Schrödinger, entropy, manifold with boundary, reflected Brownian motion, Navier-Stokes
equations.
\end{keywords}


\section{Introduction}\label{Section1}
\setcounter{equation}0

In mechanics, there are two classical
 dual descriptions of every phenomenon. 
 The first one is Newton's laws of motion (or
 Hamilton's equations). 
 They characterise the evolution 
 of a system by differential equations. The second one is the principle of least action. It characterises 
 the motion as the 
minimiser of a functional
constructed from
the kinetic and the potential energy. 
Applied to the evolution of perfect fluids, the first approach leads to the Euler equations
while the second approach sees
the evolution as a geodesic 
in the space of volume preserving diffeomorphisms
and it was developed by Arnold \cite{Arn}.
A relaxation of this problem
was proposed by 
Brenier \cite{Bre1}
where,
instead of seeking a flow, he looks for a measure
on the space of trajectories. His new problem is the minimisation of an average kinetic energy. The incompressibility constraint 
(i.e., volume preserving condition) 
 becomes a constraint on
 the marginals and the final endpoint condition becomes an endpoints measure constraint. 
 Brenier showed the accuracy of his problem 
by relating the solutions of the Euler equations 
with the solutions of his problem.

The problem treated in this article is the 
\emph{Brenier-Schrödinger problem}. This has been introduced in \cite{AAC} as a perturbation of Brenier's problem where the kinetic energy 
to be minimised is defined using 
a stochastic notion of velocity. 
This notion of velocity allows us
to think the problem as 
an entropy minimisation under marginal and endpoint constraints and allows us to use convex optimisation approaches.
It has been studied by several authors
  \cite{ACF,ACLZ,Bar2,Bar,BarL,BCN,Nen}.  In 
  the present article, we study this problem on compact manifolds with boundary and we work on 
the following two questions. 
\begin{itemize}[leftmargin=5mm]
\item The first one is the kinetics of 
the solutions. While 
Brenier's problem is linked to the Euler equations, 
the Brenier-Schrödinger problem is linked to the 
Navier-Stokes equations for which the viscosity term is a perturbation of the Euler equations. 
We prove that
the backward stochastic velocity of a regular
solution of the 
Brenier-Schrödinger problem is a solution of the Newtonian part of the Navier-Stokes equations.
This generalises to compact manifolds with
boundary the result of 
\cite{ACLZ} on the Euclidean space and the tori.
The main difference in our framework is the behavior of the solutions at the boundary. 
This can be found in Section
\ref{sec:ResultsKinematic}.
	\item The second question is about
the existence of a solution.  
We give a necessary and sufficient
condition for the existence of a unique
solution to the incompressible
Brenier-Schrödinger problem on homogeneous spaces.
This generalises the result given for the tori
in \cite{ACLZ}.
Moreover, we develop a method
to transport this result to
quotients by reflection groups.
Finally, we mention an additional 
example on the Euclidean space 
in a non-incompressible setting.
These results can be found in Section 
\ref{sec:ResultsExistence}.
\end{itemize}

Let us describe the model.
From now on, we fix a compact Riemannian manifold
$M$  with boundary $\partial M$,
interior $\mathring{M}$
and normalised Riemannian volume measure
$vol$, i.e., such that $vol(M) = 1$. 
In the present article,
we are interested 
in a minimisation problem
in the set $\mathcal P(\Omega)$
of probability measures
on the path space  
\[\Omega
= \{\omega \in M^{[0,1]}:
\, \omega 
\mbox{ is continuous}\},\]
which is endowed with the compact-open topology.
This minimisation problem
will be related to
the 
\emph{reflected Brownian motion}
on $M$ (restricted
to the time interval $[0,1]$).
This is the Markov process $\left(\beta_t\right)_{t \in [0,1]}$ 
whose generator is
the Laplacian on $M$ 
with a properly chosen domain. More precisely,
it satisfies that, for every 
$C^2$ function
$f: M \to \mathbb R$ such that
$\mathrm d f_x \cdot \nu_x = 0$
at every $x \in \partial M$, 
\[f(\beta_t) - \frac{1}{2} \int_0^t 
\Delta f (\beta_s) \mathrm d s\]
is a martingale
with respect to the filtration 
$\sigma\big((\beta_s)_{s \in [0,t]  }\big)$. 
See
also
\cite{AL}
 for an equivalent formulation,
 also recalled in
Section \ref{sec:Girsanov}.
Let us call $R \in \mathcal P(\Omega)$,
the law of 
the reflected Brownian motion
on $M$ whose initial position
follows the law $vol$
and $R^x \in \mathcal P(\Omega)$,
the law of 
the reflected Brownian motion
on $M$ whose
initial position is~${x \in \mathring M}$.

The object
to be minimised
is the so-called relative entropy
whose general definition is
the following.
For any measurable space $E$, 
the \textit{relative entropy}
of a probability
measure~${\mu\in \mathcal P(E)}$ 
with respect
to 
a probability measure $\nu \in \mathcal P(E)$
is 
\[H(\mu|\nu) = \int_E \rho \log \rho \,
\mathrm d \nu\]
if $\mathrm d \mu = \rho \, \mathrm d \nu$
and $H(\mu|\nu) = \infty$ if $\mu$ is not
absolutely continuous with
respect to $\nu$.

Now, 
let $\TT$ be a measurable subset of $[0,1]$, $(\mu_t)_{t\in\TT}$ a family of probability
measures on~$M$ indexed by $\TT$  and $\pi\in\PP(M\times M)$. 
We are interested in 
minimising $H(Q|R)$ among
all~$Q \in \mathcal P(\Omega)$
with the following constraints.
For any $t \in \mathcal T$, 
we ask that
$Q_t=\mu_t$, where
$Q_t$ is the image
measure (or pushforward) of $Q$
by the canonical map 
${X_t : \omega \in \Omega \mapsto 
\omega(t) \in M}$.
Additionally,
we ask that $Q_{01}=\pi$, where $Q_{01}$
is the
image measure of
$Q$ by the endpoints map~${(X_0,X_1) : \omega \in \Omega \mapsto 
\big(\omega(0),\omega(1)\big) 
\in M \times M}$.
This minimisation problem is 
 called the 
 \emph{Brenier-Schrödinger (or Brödinger, or Bredinger) problem}, it will be denoted by $\eqref{BS}$ in 
 this article
 and can be summarised as follows.
\begin{equation}\label{BS}
H(Q|R)\to \min,\, Q\in\PP(\Omega),\, [Q_t = \mu_t,\, \forall t\in\TT],\, Q_{01}=\pi.\tag{BS}
\end{equation}
It is a strictly convex problem with  convex constraints. 
Then,
the problem (BS) admits a unique solution
 if and only if there exists $Q\in\PP(\Omega)$ such that $Q_t=\mu_t$ for all $t\in\TT$,~$Q_{01}=\pi$ and $H(Q|R)<\infty$. 

A particular case is the \emph{incompressible
Brenier-Schrödinger problem}, denoted
by \eqref{iBS}. 
This is the case where
$\TT = [0,1]$, 
$\mu_t = vol$ for every $t \in [0,1]$ and $\pi$
has both marginals equal to $vol$.
\begin{equation}\label{iBS}
H(Q|R)\to \min,\, Q\in\PP(\Omega),\, [Q_t = vol,\, \forall t\in [0,1]],\, Q_{01}=\pi.\tag{iBS}
\end{equation}
We give some examples
where a solution to \eqref{iBS} exists
in Section \ref{sec:ResultsExistence}. 

A particular class
of solutions to \eqref{BS} are
introduced in \cite{ACLZ}.
By using the dual
maximisation problem, they have shown that
 if $P \in \mathcal P(\Omega)$
can be written as
\[\mathrm d P(X)=\exp\left(\eta(X_0,X_1) + \sum_{s\in\SS}\theta_s(X_s)+ \int_\TT p_t(X_t)\,
\mathrm dt\right) 
\mathrm dR(X)\]
for some bounded measurable
$\eta :M \times M\to\R$, $p:\TT\times M\to\R$ and $\theta :\SS \times M\to\R$, then~$P$
is a solution of \eqref{BS}
for the set $\TT \cup \SS$ and 
the marginals $\mu_t = P_t$. In fact,
\cite{ACLZ} describes the general form
of a solution by
relating the dual
maximisation problem and the primal 
minimisation problem. The existence of such solutions is proved in \cite{BarL} in the particular case of discrete problems, i.e for $\TT=\emptyset$.
In Section \ref{sec:ResultsKinematic}, we 
show that
 solutions of this form that are regular, 
 in a sense to be precised there, give rise
to solutions of the Navier-Stokes equations.

Let us summarise this article. In Section
\ref{sec:ResultsKinematic},
we present the result on the description of
regular solutions of \eqref{BS}
via the Navier-Stokes equations.
In Section \ref{sec:ResultsExistence}, we present the results on the existence of solution of \eqref{iBS} on compact manifolds and 
a non-incompressible version 
\eqref{E.gamma} on
$\mathbb R^n$ defined there.
In Section \ref{sec:Girsanov}, we develop the Girsanov theory to define the velocity of solutions. This makes a link between entropy minimisation and kinetic energy minimisation, see Remark 
\ref{rem:EntropyKineticEnergy}.
In Section \ref{sec:kin}, we give the proof of the kinetic results 
from Section \ref{sec:ResultsKinematic}.
Finally,
Section \ref{ex:sec.2}, 
Section \ref{ex:sec.3} and 
Section \ref{ex:sec.4} are dedicated to the proofs of 
the
results
about the existence from Section \ref{sec:ResultsExistence}.

\section{Results on the kinetics behavior}
\label{sec:ResultsKinematic}

Let $\TT$ be an
open subset of $[0,1]$ 
which is
a finite union of intervals
 and let $\SS$ be  a finite subset of $(0,1)$
such that $\TT \cap \SS=\emptyset$.
Following \cite{ACLZ}, we say that
 $P \in \mathcal P(\Omega)$
is a \emph{regular solution}
of \eqref{BS}
if it can be written as 
\begin{equation}
\label{eq:RegularSolutionForm}
\mathrm d P(X)=\exp\left(\eta(X_0,X_1) + \sum_{s\in\SS}\theta_s(X_s)+ \int_\TT p_r(X_r)\, \mathrm dr\right) 
\mathrm dR(X).
\end{equation}
for some 
regular enough functions
$\eta :M \times M\to\R$, $p:\TT\times M\to\R$ and $\theta :\SS \times M\to\R$. The regularity is such that
all equations in the theorem below makes sense 
($C^2$ would be enough, for instance,
but we are not interested
in attaining the least possible regularity).
As already explained,
from a dual-primal problem argument
explained in \cite{ACLZ}, a regular solution
is an actual solution of \eqref{BS}.

Now, let us introduce some 
further notation and comments before stating
the results.
\vspace{2mm}\\
\textbf{Forward regular solution.}
We will
say that a regular solution
$P$ is
a
\emph{forward regular solution}
of \eqref{BS} if, 
for every $x \in M$, the function
 $\psi^x: [0,1] \times M \to \mathbb R$ given by
\begin{equation}\label{eq:psi}
\psi^x_t(z) = \log\E_{R^x}\left[\left. \exp\left(\eta(x,X_1) + \sum_{s\in\SS 
\cap (t,1] }\theta_s(X_s) +\int_{\TT\cap (t,1]}p_r(X_r)\, \mathrm dr\right)
\right| X_t=z\right]
\end{equation}
is  $C^2$ in $z \in M$ and
$C^1$ in $t \in [0,1]\setminus \SS$ such that the function and its space first
derivatives are (jointly) càdlàg in $t$.
The times in 
$\SS$ are sometimes called \textit{shock times}.
These are the times where
$t \mapsto \psi^x_t(z)$ is discontinuous.
On the other hand,
the times in
$\TT$ are called
\textit{regular times}. 
The function $p$ can be thought of as a 
  \textit{pressure field} 
while the functions $\theta_s$ can be thought of as
  \textit{shock potentials}. 
\vspace{2mm}\\
\textbf{Backward regular solution.}
We will say
that a regular
solution~$P$ is a 
\emph{backward
regular solution}
of \eqref{BS} if
the time reversal of $P$
is a forward regular solution.
Equivalently, $P$ is a backward regular solution
of \eqref{BS} if, for
every $y \in M$, 
the function~${\varphi^y: [0,1] \times M \to \mathbb R}$ given by
\begin{equation}\label{eq:phi}
\varphi^y_t(z) =
 \log\E_{R^y}\left[\left. \exp\left(\eta(X_1,y) + \hspace{-5mm}
 \sum_{s\in\SS
 \cap [0,1-t)}
 \hspace{-3mm}
 \theta_s(X_{1-s}) +\int_{\TT\cap[0,1-t)}
 \hspace{-5mm}
 p_r(X_{1-r})\, \mathrm dr\right)
\right| X_t=z\right]
\end{equation}
is  $C^2$ in $z \in M$ and
$C^1$ in $t \in [0,1]\setminus \SS$
such that the function and its space first
derivatives are (jointly) càdlàg in $t$.
\vspace{2mm}\\
\textbf{Disintegration by the final position.}
We will be interested
in
the family $(\accentset{\leftharpoonup}{P}^y
)_{y \in M}$ 
of probability measures on $\Omega$ that satisfy
\begin{equation}\label{eq:Pbackward}
P=\int_M 
\accentset{\leftharpoonup}{P}^y \mathrm d P_1(y) 
\quad \mbox{ and }
\quad \accentset{\leftharpoonup}{P}^y
\left(\{\omega \in \Omega: 
\omega(1)=y\}\right) = 1
\mbox{ for every } y \in M.
\end{equation}
These can be thought of as the conditional laws
of $P$ given the final position
and are uniquely defined except for $x$
on a set
of $P_0$-measure zero.
\vspace{2mm}\\
\textbf{Logarithm.}
Suppose that $P$ has finite entropy
with respect to $R$. We will use the 
notion of \emph{stochastic velocity}
(or mean derivative) of $P$ introduced
initially by
Nelson  in \cite{Nel} for real processes. 
A presentation of the generalisation to manifold can be found in \cite{Gli}. 
Recall
that there exists
an open set $\mathcal N$ of 
$T\mathring{M}$
that contains the zero section
and
such that the
 exponential map~${\exp:\mathcal N \to \mathring{M}}$ 
is well-defined
and its restriction
$\exp_x:
\mathcal N \cap T_xM
\to \mathring{M}$
is a diffeomorphism onto
an open subset
$U(x) \subset M$.
Then, 
for $x \in M$
and $y \in U(x)$,
define
\[\overrightarrow{xy}=\log_x(y)\]
as the unique element of
$\mathcal N \cap T_xM$
such that 
$\exp(\overrightarrow{xy})
= y$.
Since $P$ is absolutely
continuous with respect to $R$,
we can see that,
for any $t$,
$X_t\notin \partial M$  for 
$P$-almost every $X$
since this also happens
for $R$-almost every $X$. 
The random times
\begin{equation}
\accentset{\leftharpoonup}{\tau_t} =
\frac{1}{2}
\inf 
\left\{h\leq 0 : X_{t-s}\in U(X_t)
\mbox{ for every }
s \in [h,0]
\right\}
\end{equation}
are strictly negative.
They allow us to define mean derivatives in a manifold.
\vspace{2mm}\\
\textbf{Covariant derivative and Laplacian.}
We denote
by $\nabla_u$ the
covariant derivative
in the direction of $u$
and by $\square$
the (negative definite) 
de Rham-Hodge-Laplace operator.
More precisely,
if 
$\delta$ the adjoint of the exterior differential from
the space of one-forms 
to the space of 
two-forms, then 
the de Rham-Hodge-Laplace operator
on one forms
is defined as~$-(\mathrm d\delta + \delta \mathrm d)$. 
The operator
$\square$ we need
is obtained
if we think
vector fields
as one-forms by using
the metric.
\vspace{2mm}\\
\textbf{Navier-Stokes equations.}
By the \emph{Navier-Stokes equations} on $M$, we refer to the differential system of unknown $(v,p)$,
together with an initial condition $v_0$,
\[
\left\{\begin{aligned}
&\left(\partial_t  +\nabla_{v_t}\right) v_t =\frac{1}{2}\square v_t-\nabla p_t, && 
t \in [0,1), \\
&\Div(v_t)=0, && t\in[0,1),\\
&\langle v(z),\nu_z\rangle = 0, && z\in\partial M,\\
\end{aligned}\right.
\]
where $\nu_z$ is the inward-pointing unit
normal to $z$.
Remark that our viscosity $\frac{1}{2}\square$ differs from \cite{ACF} where the Laplace operator on vector field is the divergence of the deformation tensor. However, in flat spaces, where Ricci curvature vanishes, both Laplace operators coincide. 
\vspace{2mm}\\
Now, we are ready to state one of our main results,
which is a generalisation of the Euclidean and tori results
\cite[Theorem 5.4]{ACLZ}.

\begin{theo}[Backward stochastic velocity and the Navier-Stokes
equations]
\label{theo:NavierStokes}

Suppose~$P$
is a backward regular solution
of \eqref{BS} with associated function $\varphi$.
 Then, 
for $P_1$-almost every
$y$, 
there exists
a measurable function called
the \emph{backward stochastic velocity}
\[\vb{y}_{\hspace{-1mm}}
\hspace{1mm}:[0,1]\times \Omega
\to TM\]
such that
$t \mapsto
\vb{y}_{\hspace{-1mm}t}
\hspace{-1mm}
(\omega)$ is 
left-continuous and has right
limits for every
$\omega \in \Omega$ and
such that
for every $t \in [0,1]$
we have
that, for
$P$-almost every $X$,
\begin{equation*}
\lim_{h\to 0^+}\frac{1}{h}\E_{\accentset{\leftharpoonup}{P}^y}
\left[\left. -
\overrightarrow{
X_{t}X_{t-h 
\wedge  \accentset{\leftharpoonup}{\tau}_t}}
\right|X_{[t,1]}\right]
=\vb{y}_{\hspace{-1mm}t}
\hspace{-1mm}(X).
\end{equation*}
Let $U_t^y(z) = - 
\nabla \varphi_{1-t}^y(z)$. 
Then, for $P_1$-almost every
$y \in M$,
 \[\vb{y}_t = U_t^y(X_t)
 \quad 
 P\text{-almost surely}.\]
Moreover,
for $P_1$ almost every $y\in M$,
the time-dependent
vector field
$U^y$ satisfies 
\begin{equation}
\label{eq:NavierStokes}
\left\{\begin{aligned}
&\left(\partial_t  +\nabla_{U_t^y}\right) U_t^y =\frac{1}{2}\square U_t^y-\ind_\TT(t)\nabla p_t, && 
t \in [0,1)\setminus 
\SS, \\
&U^y_{t^+}- U^y_{t} = 
\nabla \theta_t, && t\in S,\\
&\langle U^y(z),\nu_z\rangle = 0, && z\in\partial M,\\
&U^y_0=-\nabla\big(\eta(\cdot,y)\big), && 
t=0.\\
\end{aligned}\right.
\end{equation}

\end{theo}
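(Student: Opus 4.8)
The plan is to analyse, for each terminal point $y$, the conditional measure $\accentset{\leftharpoonup}{P}^y$ through its time reversal, which turns out to be an explicit Feynman--Kac ($h$-)transform of the reflected Brownian motion started at $y$; Girsanov theory then identifies its drift as $\nabla\varphi^y$, and a Hamilton--Jacobi analysis of $\varphi^y$ produces \eqref{eq:NavierStokes}. As a regular solution, $P$ is a genuine solution of \eqref{BS}, so $H(P\,|\,R)<\infty$ and the velocity machinery of Section \ref{sec:Girsanov} applies. Disintegrate $P=\int_M\accentset{\leftharpoonup}{P}^y\,\mathrm dP_1(y)$ by the final position and, for $P_1$-a.e. $y$, let $\tilde P^y$ be the image of $\accentset{\leftharpoonup}{P}^y$ under the reversal $\omega\mapsto(t\mapsto\omega(1-t))$, a measure on paths issued from $y$. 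Using \eqref{eq:RegularSolutionForm} and the reversibility of $R$,
\[
\frac{\mathrm d\tilde P^y}{\mathrm dR^y}(X)=\frac{1}{Z^y}\exp\Big(\eta(X_1,y)+\sum_{s\in\SS}\theta_s(X_{1-s})+\int_\TT p_r(X_{1-r})\,\mathrm dr\Big),\qquad Z^y>0,
\]
and, comparing with \eqref{eq:phi}, $e^{\varphi^y_t(z)}$ is exactly the $R^y$-conditional expectation, given $X_t=z$, of the part of this exponential carried by times in $(t,1]$; equivalently, $\mathrm d\tilde P^y/\mathrm dR^y$ restricted to $\sigma(X_{[0,t]})$ is $(Z^y)^{-1}e^{\varphi^y_t(X_t)}$ times the potential accumulated on $[0,t]$, a (local) $R^y$-martingale. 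Note that $\SS$ being finite in $(0,1)$ contributes the reversed shock times $1-\SS\subset(0,1)$.

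Next I would run Girsanov on this density. Off the reversed shock times, differentiating the logarithm of the above martingale shows that under $\tilde P^y$ the canonical process is a reflected Brownian motion with added drift $\nabla\varphi^y_t$. Since $\accentset{\leftharpoonup}{P}^y$ is the reversal of $\tilde P^y$, the backward mean derivative in the sense of the displayed limit of the theorem --- computed with the exponential-map increments $\overrightarrow{X_t X_{t-h}}$ and the truncation $\accentset{\leftharpoonup}{\tau}_t$ keeping the increment in a normal neighbourhood --- of the canonical process under $\accentset{\leftharpoonup}{P}^y$ at time $t$ equals minus the forward drift of $\tilde P^y$ at time $1-t$, i.e. $-\nabla\varphi^y_{1-t}(X_t)=U^y_t(X_t)$. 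This produces $\vb{y}$ with the asserted left-continuity and right limits, inherited from the càdlàg-in-$t$ regularity of $\varphi^y$ and the continuity of $X$, together with $\vb{y}_t=U^y_t(X_t)$; reassembling through $P=\int\accentset{\leftharpoonup}{P}^y\,\mathrm dP_1(y)$ upgrades ``$\accentset{\leftharpoonup}{P}^y$-a.s.'' to ``$P$-a.s., for $P_1$-a.e.\ $y$''.

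For the equations, apply Itô's formula to the $R^y$-martingale above for the reflected Brownian motion. Off the reversed shock times, the vanishing of the finite-variation part yields the Hamilton--Jacobi identity $\partial_t\varphi^y_t+\tfrac12\Delta\varphi^y_t+\tfrac12|\nabla\varphi^y_t|^2+\ind_\TT(1-t)\,p_{1-t}=0$, the boundary local-time term dropping because $\varphi^y_t$ satisfies $\mathrm d(\varphi^y_t)_z\cdot\nu_z=0$ on $\partial M$ --- which holds since $z\mapsto e^{\varphi^y_t(z)}$ is produced by the reflecting semigroup of $R$, whose range lies in the Neumann domain of $\Delta$. Setting $w_t(z)=\varphi^y_{1-t}(z)$, so that $U^y_t=-\nabla w_t$, this becomes $\partial_t w_t=\tfrac12\Delta w_t+\tfrac12|\nabla w_t|^2+\ind_\TT(t)\,p_t$ on $[0,1)\setminus\SS$; applying $-\nabla$ and using $\nabla(\Delta w)=\square(\nabla w)$ --- which is exactly why the de Rham--Hodge--Laplace operator enters, since $\square\,\mathrm dw=-\mathrm d\delta\,\mathrm dw=\mathrm d(\Delta w)$ --- together with $\tfrac12\nabla|\nabla w|^2=\nabla_{\nabla w}\nabla w$ (symmetry of the Hessian) gives $(\partial_t+\nabla_{U^y_t})U^y_t=\tfrac12\square U^y_t-\ind_\TT(t)\nabla p_t$. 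The impermeability condition $\langle U^y_t(z),\nu_z\rangle=0$ on $\partial M$ is the Neumann condition just used; the shock relation at $t\in\SS$ and the initial condition follow by taking $-\nabla$ of the jump of $\varphi^y$ at time $1-t$ (the future term $\theta_t(X_t)$ being dropped there) and of the terminal value $\varphi^y_1(\cdot)=\eta(\cdot,y)$ read off from \eqref{eq:phi}.

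The main obstacle, and the genuinely new point compared with the Euclidean and toric setting of \cite{ACLZ}, is the reflecting boundary: one must run the Girsanov and velocity construction of Section \ref{sec:Girsanov} in the presence of the boundary local time, check that the Neumann condition propagates from $R$ to the conditional cost-to-go functions $\varphi^y$ and hence to $U^y$, and verify that the boundary term in Itô's formula vanishes --- which is precisely what becomes the impermeability condition $\langle U^y,\nu\rangle=0$ rather than a no-slip condition. The remaining ingredients (the martingale identification, the Itô computation, the identity $\nabla\Delta=\square\nabla$, and the measure-theoretic reassembly in $y$) are then routine.
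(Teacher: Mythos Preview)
Your proposal is correct and follows essentially the same route as the paper: the paper first proves the forward version (Corollary~\ref{cor:ForwardNavierStokes}) via Lemma~\ref{lem:ZetaPsiRelation} and Theorem~\ref{theo:Hamilton}, and then obtains Theorem~\ref{theo:NavierStokes} by applying a global time reversal $P\mapsto P^*=\mathrm{rev}_*P$, which is exactly your reversal $\accentset{\leftharpoonup}{P}^y\mapsto\tilde P^y$ read fibrewise. The one organisational difference is the impermeability condition: you invoke it \emph{a priori} from the Neumann property of the reflecting semigroup, whereas the paper extracts it \emph{a posteriori} from the Doob--Meyer decomposition (the $\mathrm dL$-term in It\^o's formula for $\psi^x_t(X_t)$ must vanish, forcing $\langle\nabla\psi^x_t,\nu\rangle=0$ on $\partial M$); the latter is a bit more self-contained since it avoids appealing to regularity of Feynman--Kac semigroups.
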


The first equation in \eqref{eq:NavierStokes}
is the Newtonian part of the Navier-Stokes
equations 
while 
the second equation describes the evolution at 
the shock times. The third equation 
tells us 
the behavior at the boundary of the domain,
it says that the
stochastic velocity  
satisfies the impermeability condition. 
The fourth is the initial condition of the problem. 
Nevertheless, the velocity
does not seem to satisfy any continuity equation. 
The same approach on forward velocity results on a time reversed Navier-Stokes equation (or Navier-Stokes equation with negative viscosity)
which is stated in Corollary \ref{cor:ForwardNavierStokes}.

A continuity equation
is satisfied for
a combination of
averaged forward and
backward velocities,
$\stackrel{\hookrightarrow}{v}$ and $\stackrel{\hookleftarrow}{v}$.
These are defined by
\begin{equation}
\stackrel{\hookrightarrow}{v}_t(z) = 
\E_{P}\left[\left. 
\vf{X_0}_{\hspace{-2mm}t}
\hspace{2mm} \right|X_t = z\right] \quad\text{and}\quad\stackrel{\hookleftarrow}{v}_t(z) = 
\E_P\left[\left. 
\vb{X_1}_{\hspace{-2mm}t}
\hspace{2mm} \right|X_t = z\right],
\end{equation}
where $\hspace{-1mm}
\vf{x}_{\hspace{-1mm}t}
$ and 
$\hspace{-1mm} \vb{y}_{\hspace{-1mm}t}$
are defined in
Theorem 
\ref{theo:NavierStokes}
and Corollary 
\ref{cor:ForwardNavierStokes}. They are also shown to be measurable in $x$ and $y$ respectively
in these results.
The \emph{current velocity}
is defined as
\begin{equation}
v_{cu}
=\frac{1}{2}
\big( \hspace{-1mm}
\stackrel{\hookrightarrow}{v}_t+ \stackrel{\hookleftarrow}{v}_t \hspace{-1.2mm}
\big)
,\, \forall t\in[0,1]
\end{equation}
and it satisfies
the following continuity equation
which generalise 
\cite[Theorem 5.4]{ACLZ}.
\begin{theo}[Continuity 
equation]
\label{theo:ContinuityEquation}

Assume that (BS) admits a forward and backward regular solution $P$.
Then $v_{cu}$ satisfies
\begin{equation}
\partial_t \mu_t +\Div(\mu_t v_{cu}) = 0,\, \forall t\in\TT.
\end{equation}
\end{theo}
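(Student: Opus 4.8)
The plan is to derive the continuity equation for $v_{cu}$ from the forward and backward descriptions of $P$ provided by Theorem~\ref{theo:NavierStokes} and Corollary~\ref{cor:ForwardNavierStokes}. The key observation is that the forward stochastic velocity $\vf{X_0}$ and the backward stochastic velocity $\vb{X_1}$ are the forward and backward Nelson mean derivatives of the process under $P$, and that the half-sum of the corresponding drifts governs the evolution of the marginal laws $\mu_t = P_t$ without any second-order (diffusion) term. Concretely, I would first recall from the Girsanov theory of Section~\ref{sec:Girsanov} that, writing $P$ locally as a diffusion with generator $\tfrac12\Delta + \langle b^+_t, \nabla\cdot\rangle$ in the forward filtration (so $b^+_t(z) = \stackrel{\hookrightarrow}{v}_t(z)$ after conditioning, using \eqref{eq:psi}) and with generator $\tfrac12\Delta + \langle b^-_t,\nabla\cdot\rangle$ in the backward filtration (so $b^-_t(z) = \stackrel{\hookleftarrow}{v}_t(z)$, using \eqref{eq:phi}), the densities $\rho_t$ of $\mu_t$ with respect to $vol$ satisfy the forward Fokker--Planck equation $\partial_t \rho_t = \tfrac12\Delta\rho_t - \Div(\rho_t\, b^+_t)$ and the backward Fokker--Planck equation $\partial_t\rho_t = -\tfrac12\Delta\rho_t - \Div(\rho_t\, b^-_t)$, both in the weak sense on the open set $\TT$.

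The next step is the classical Nelson duality computation: averaging the two Fokker--Planck equations makes the $\pm\tfrac12\Delta\rho_t$ terms cancel, yielding
\begin{equation*}
\partial_t\rho_t + \Div\!\left(\rho_t\, \tfrac12\big(b^+_t + b^-_t\big)\right) = 0
\end{equation*}
in the weak sense on $\TT$, which is exactly $\partial_t\mu_t + \Div(\mu_t v_{cu}) = 0$ once we identify $\tfrac12(b^+_t + b^-_t)$ with $v_{cu}$. To make this identification rigorous I would use the explicit forms from Theorem~\ref{theo:NavierStokes} (and its forward analogue): there the backward velocity is shown to be $\vb{y}_t = U^y_t(X_t) = -\nabla\varphi^y_{1-t}(X_t)$, $P$-a.s., and symmetrically the forward velocity is of the form $\vf{x}_t = \nabla\psi^x_t(X_t)$, so that $\stackrel{\hookrightarrow}{v}_t$ and $\stackrel{\hookleftarrow}{v}_t$ are obtained by conditioning these gradients on $X_t = z$ and integrating against the disintegrations $(\accentset{\rightharpoonup}{P}{}^{x})$ and $(\accentset{\leftharpoonup}{P}{}^{y})$. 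The fact that these conditional expectations coincide with the Girsanov drifts $b^+_t$ and $b^-_t$ is precisely the content of the stochastic-velocity identification carried out in Section~\ref{sec:Girsanov}, so I would invoke it rather than reprove it.

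The main obstacle I anticipate is handling the regularity and the boundary/shock structure carefully enough that the weak Fokker--Planck equations and their sum are legitimate on $\TT$. Because $\TT$ is open and disjoint from the shock set $\SS$, and because on $\TT$ the functions $\psi^x$ and $\varphi^y$ are $C^1$ in time and $C^2$ in space by the forward/backward regularity hypotheses, the drifts are continuous there and the usual Itô/Dynkin argument applies to smooth test functions on $M$; the impermeability condition $\langle U^y(z),\nu_z\rangle = 0$ on $\partial M$ (and its forward counterpart) is what allows the divergence terms to be interpreted without boundary contributions, since the reflected Brownian motion's generator comes with the Neumann-type domain. I would therefore (i) fix a smooth $f\colon M\to\mathbb R$, (ii) apply the forward martingale characterization to get $\frac{d}{dt}\E_P[f(X_t)] = \E_P[\tfrac12\Delta f(X_t) + \langle \nabla f(X_t), \vf{X_0}_t\rangle]$ for $t\in\TT$, (iii) apply the time-reversed statement to get $\frac{d}{dt}\E_P[f(X_t)] = \E_P[-\tfrac12\Delta f(X_t) + \langle\nabla f(X_t),\vb{X_1}_t\rangle]$, (iv) average, condition on $X_t$, and recognize the result as the weak form of the claimed equation, and (v) conclude by the fundamental lemma of the calculus of variations that it holds pointwise (in the distributional sense) for $t\in\TT$. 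The only genuinely delicate point is justifying the time-reversal formula with the correct backward drift on a manifold with boundary, but this is supplied by the backward regular solution hypothesis together with the reflection symmetry of $R$.
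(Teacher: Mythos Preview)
Your overall strategy---derive a forward and a backward evolution equation for $\E_P[f(X_t)]$ and average so that the $\pm\tfrac12\Delta f$ terms cancel---is exactly the paper's approach. However, there is a genuine gap in your handling of the boundary.

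You claim that the impermeability condition $\langle U^y(z),\nu_z\rangle=0$ (and its forward analogue) is what makes boundary contributions disappear. This is not the relevant mechanism. The It\^o formula for the \emph{reflected} process applied to a generic test function $f\in\CC^\infty(M)$ produces an extra term $\int_0^t\langle \mathrm df,\nu_{X_s}\rangle\,\mathrm dL_s$, where $L$ is the boundary local time; this term depends on the normal derivative of the \emph{test function} $f$, not of the velocity field. For a general $f$ (one not in the Neumann domain) it does not vanish, so your step~(ii),
\[
\frac{\mathrm d}{\mathrm dt}\E_P[f(X_t)]
= \E_P\!\left[\tfrac12\Delta f(X_t)+\langle\nabla f(X_t),\vf{X_0}_t\rangle\right],
\]
is false as written: it is missing a surface term $\text{\underbar{$P$}}_t(\langle\nu,\mathrm df\rangle)$, and the analogous backward identity in step~(iii) is missing $-\text{\underbar{$P$}}_t(\langle\nu,\mathrm df\rangle)$.

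The paper makes this explicit: it keeps the local-time integrals, rewrites them via the $\varepsilon$-tubular approximation as surface-measure pairings, and observes that the forward equation carries $+\text{\underbar{$P$}}_t(\langle\nu,\mathrm df\rangle)$ while the backward equation carries $-\text{\underbar{$P$}}_t(\langle\nu,\mathrm df\rangle)$. It is this \emph{cancellation upon averaging}---not the impermeability of $U^y$ or $V^x$---that removes the boundary contribution and yields $\partial_t P_t(f)=P_t(\langle v_{cu},\mathrm df\rangle)$. Your final averaged equation is therefore correct, but for a reason you have not identified; to make the argument sound you must retain the local-time terms in (ii) and (iii) and then show they cancel in (iv).
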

Actually, this result is not particular to bi-regular solutions, nor to solution. In fact, the proof uses only that $P$ is a semi-martingale measure with finite entropy with respect to the reflected Brownian motion. There, the average velocity shall be defined using the drift given by Girsanov theorem, more precisely, by applying Theorem \ref{theo:Girsanov} to $P$ and $R$ instead of $P^x$ and $R^x$. 
This equation has to be understood in distribution sense,
i.e., 
for all $f\in\CC^\infty(M)$ and every $t\in [0,1]$,
\begin{equation}
 \mu_t(f) + \int_0^t\mu_s(\langle 
 \mathrm df, v_{cu}\rangle)\mathrm ds = 0.
\end{equation}
In the 
incompressible case
\eqref{iBS}, where $\mu_t = vol$, the continuity equation becomes 
the incompressibility condition
$\Div(v_{cu}) = 0$.
Nevertheless, $v_{cu}$ does not satisfy the
Navier-Stokes equations.

\section{Results on the existence
of solutions}
\label{sec:ResultsExistence}

Let $M$ be a homogeneous compact Riemannian manifold,
i.e., one where the isometries act transitively.
We consider the incompressible Brenier-Schrödinger problem $\eqref{iBS}$. We may
write explicitly the dependence on $\pi$ and $M$
by $\eqref{iBS}_{M,\pi}$.

\begin{theo}[Existence for
homogeneous spaces]
\label{theo:Homogeneous}
The $\eqref{iBS}_{M,\pi}$ problem on a homogeneous
compact manifold $M$
admits a unique solution if and only if 
$H(\pi |vol\otimes vol)<\infty$.
\end{theo}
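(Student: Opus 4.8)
The statement has the form "problem admits a unique solution iff a finiteness condition holds." As noted right after the definition of \eqref{BS}, the problem has a unique solution exactly when there exists a competitor $Q$ with the prescribed marginals and finite entropy $H(Q|R)$. So the real content is: on a homogeneous compact manifold, one can find a $Q\in\PP(\Omega)$ with $Q_t = vol$ for all $t\in[0,1]$, $Q_{01}=\pi$, and $H(Q|R)<\infty$, precisely when $H(\pi|vol\otimes vol)<\infty$. The "only if" direction is the easy one: if $Q$ is any competitor with $H(Q|R)<\infty$, then since $(X_0,X_1)$ is measurable, the data-processing inequality for relative entropy gives $H(Q_{01}|R_{01})\le H(Q|R)<\infty$, and $R_{01}$ has a density with respect to $vol\otimes vol$ bounded above and below (it is $p_1(x,y)$, the reflected heat kernel at time $1$, which on a compact manifold is bounded and bounded away from $0$), so $H(\pi|vol\otimes vol)<\infty$ follows.

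The substance is the "if" direction: constructing the competitor. The plan is to build $Q$ as a mixture of suitably time-changed reflected Brownian bridges, exploiting homogeneity to keep all one-dimensional marginals equal to $vol$. Concretely, let $G$ be the isometry group acting transitively on $M$ with Haar probability measure $m$. Fix the "reference" competitor obtained by the following recipe: take a reflected Brownian bridge from $x$ to $y$ over a short time, insert it into a longer window, and average over the group so that at every intermediate time the law is $vol$. The cleanest implementation: let $Q^{x,y}$ be the law of the reflected Brownian bridge tied at $X_0=x$, $X_1=y$ (this is $R$ conditioned on its endpoints, which has finite entropy with respect to $R$ when $p_1(x,y)>0$, with $H(Q^{x,y}|R) = -\log p_1(x,y)$ plus the entropy of the bridge relative to the pinned reference — actually $H(Q^{x,y}|R)=\log(1/p_1(x,y))$ is not quite right, but $H(\int Q^{x,y}\,\d\pi(x,y)\,|\,R)$ decomposes via the chain rule into $H(\pi|R_{01})$ plus an averaged bridge term that is uniformly controlled). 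The key point is that if $\pi$ has both marginals $vol$ but the bridge's intermediate marginals are NOT $vol$, one must correct them. Homogeneity is what allows the correction: one averages over $G$, or more precisely one uses a $G$-equivariant coupling, so that the intermediate-time marginal becomes the $G$-average of the bridge marginal, which by transitivity and uniqueness of the invariant measure is $vol$. The standard trick (as in \cite{ACLZ} for tori) is: pick the solution of the incompressible problem for $\pi_0 = vol\otimes vol$ first — e.g. $R$ itself restricted appropriately, or a Brownian-motion-with-random-time-reversal construction — note it has finite entropy and all marginals $vol$; then to hit a general $\pi$, reweight only the endpoint law, i.e. set $\d Q = \frac{\d\pi}{\d(vol\otimes vol)}(X_0,X_1)\,\d Q^{(0)}(X)$ where $Q^{(0)}$ is that reference measure with $Q^{(0)}_{01}=vol\otimes vol$ and $Q^{(0)}_t=vol$ for all $t$. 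This reweighting fixes the endpoints to $\pi$; it preserves the intermediate marginals if $Q^{(0)}$ has the property that conditioning on $(X_0,X_1)$ and then integrating against any measure with marginals $vol$ still yields $vol$ at each time — which again is arranged by a group-averaging symmetry in the construction of $Q^{(0)}$. Finally $H(Q|R) = H(\pi|vol\otimes vol) + H(Q^{(0)}|R)$ by the chain rule, and $H(Q^{(0)}|R)<\infty$ is checked by construction, so $H(Q|R)<\infty$.

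The steps in order: (1) record the "unique solution $\iff$ existence of a finite-entropy competitor" reduction; (2) prove "only if" via data processing and boundedness of the reflected heat kernel $p_1$ on the compact $M$; (3) for "if," construct a reference measure $Q^{(0)}\in\PP(\Omega)$ with $Q^{(0)}_t=vol$ for all $t\in[0,1]$, $Q^{(0)}_{01}=vol\otimes vol$, $H(Q^{(0)}|R)<\infty$, \emph{and} the invariance property that $\int \accentset{\leftharpoonup}{Q}{}^{(0)}_{x,y}\,\d\pi(x,y)$ has all time-marginals $vol$ whenever $\pi$ has marginals $vol$ — here is where homogeneity enters, by averaging the bridge over the isometry group to symmetrize the interior marginals; (4) define $Q$ by the endpoint reweighting $\d Q/\d Q^{(0)} = \d\pi/\d(vol\otimes vol)$, verify its constraints, and compute $H(Q|R) = H(\pi|vol\otimes vol) + H(Q^{(0)}|R) < \infty$ by the entropy chain rule; (5) invoke strict convexity to conclude uniqueness. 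The main obstacle is step (3): getting a reference $Q^{(0)}$ that simultaneously has uniform intermediate marginals $vol$ \emph{and} a conditional structure (given endpoints) that survives endpoint-reweighting without disturbing those marginals. This is precisely the role of transitivity of the isometry group — on a general manifold there is no reason such $Q^{(0)}$ exists, and I expect the proof to spend most of its effort constructing it explicitly (for instance via a random rotation/translation of a fixed bridge, or via the "Brownian motion run for a random time then reflected" device used in \cite{ACLZ}) and verifying the entropy bound and the marginal-preservation lemma.
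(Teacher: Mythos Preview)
Your outline has the right skeleton (reduce to existence of a finite-entropy competitor; ``only if'' by data processing plus positivity and boundedness of $p_1$; ``if'' by constructing a competitor), but step~(3) contains a genuine gap. Averaging the Brownian bridge $R^{xy}$ over the isometry group $G$ does nothing: the bridge is already $G$-equivariant, $g_* R^{xy}=R^{gx,gy}$, so $\int_G g_* R^{g^{-1}x,g^{-1}y}\,m(\mathrm dg)=R^{xy}$. And the raw bridge does \emph{not} satisfy your invariance property. Writing the density of $R^{xy}_t$ as $p_t(x,z)p_{1-t}(z,y)/p_1(x,y)$, one checks that $z\mapsto\int \frac{p_t(x,z)p_{1-t}(z,y)}{p_1(x,y)}\,\pi(\mathrm dx\,\mathrm dy)$ is $G$-invariant (hence constant, hence identically $1$) precisely when $\pi$ is invariant under the diagonal $G$-action. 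A generic bistochastic $\pi$ is not diagonally $G$-invariant, so your reweighted measure will \emph{not} have time-marginals $vol$. (A side issue: your formula $H(Q|R)=H(\pi|vol\otimes vol)+H(Q^{(0)}|R)$ is also incorrect, since the second term should be $\int\log\frac{\mathrm dQ^{(0)}}{\mathrm dR}\,\mathrm dQ$, not $\mathrm dQ^{(0)}$; this is minor because that integrand is bounded on a compact manifold.)

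The paper's construction is different and sidesteps the trap: it inserts an \emph{independent uniform midpoint} at time $1/2$, setting
\[
Q=\int_{M^3} R(\,\cdot\mid X_0=x,\,X_{1/2}=z,\,X_1=y)\,\pi(\mathrm dx\,\mathrm dy)\,vol(\mathrm dz).
\]
The point is that this decouples the two halves. For $t\le 1/2$ the marginal $Q_t$ depends only on the joint law of $(X_0,X_{1/2})$, which is $\pi_0\otimes vol=vol\otimes vol$ \emph{regardless of the coupling in $\pi$}; the endpoint $y$ has been integrated out. One then shows $\int R_t(\,\cdot\mid X_0=x,X_{1/2}=z)\,vol(\mathrm dx)\,vol(\mathrm dz)$ is $G$-invariant --- and \emph{this} is where homogeneity is actually used --- hence equals $vol$. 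Entropy is handled by the chain rule disintegrating along $(X_0,X_{1/2},X_1)$: one gets $H(Q|R)=H(\pi|R_{01})+\int H(vol\mid R^{xy}_{1/2})\,\pi(\mathrm dx\,\mathrm dy)$, and the second term is uniformly bounded because $p_{1/2}(x,z)p_{1/2}(z,y)/p_1(x,y)$ is continuous and positive on the compact $M^3$.
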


We will show that the property of
existence of solutions
is preserved under 
nice quotients.
Our setting will be the following.
Suppose that $M$ is  a connected
compact Riemannian manifold 
(without boundary)
and that
 $G$ is a finite group of isometries
of~$M$. 
For~${x \in M}$, consider
the stabiliser group
$G_x = \left\{ g  \in G:\, g(x) = x \right\}$,
and the induced subgroup~$\mathbb G_x=\{\mathrm d g_x \in O(T_x M):\,
g \in G_x\}$
of the orthogonal group of $T_x M$.
Let $R_x$ be the set of reflections in 
$\mathbb G_x$, i.e., $T \in \mathbb G_x$ 
belongs to $R_x$ if and only if
$\{u \in T_xM:\, Tu = u\}$  has
codimension one as a subspace of $T_xM$.

\begin{defi}[Reflection group]
We shall say that $G$
is a reflection group (of 
isometries) if  $\mathbb G_x$ is
the group generated by $R_x$
for every $x \in M$.
\end{defi}

We will be interested in the set
$N =M/G$
which has a topological structure
induced by the quotient map
$q:M \to N$.
Suppose that $G$ is a reflection group. We shall make of~$N$ a manifold with
corners. But first, let us recall 
the definition.

\begin{defi}[Manifold with
corners]

Let $N$ be a Hausdorff
second countable 
topological space and let 
$n > 0$ be a positive integer. 
Suppose that we have a family
$\{\varphi_\lambda \}_{\lambda \in \Lambda}$ of
homeomorphisms
\[\varphi_{\lambda}:
U_\lambda \subset N \to V_\lambda \subset 
[0,\infty)^n\]
where $U_\lambda$ 
(respectively $V_\lambda$)
is an open subset of
$N$ (respectively of $[0,\infty)^n$).
We say that the family 
$\left(\varphi_\lambda \right)_{\lambda \in \Lambda}$ 
is a \emph{smooth atlas
with corners}
if
\[\bigcup_{\lambda \in \Lambda} 
U_\lambda = N\]
and, for every $\mu,\nu \in \Lambda$,
\[\varphi_{\mu}^{\ }
\circ
\varphi_{\nu}^{-1}:
\varphi_\nu
\left(U_{\mu} \cap U_{\nu}\right)
\to
\varphi_\mu
\left(U_{\mu} \cap U_{\nu}\right)
\]
has a smooth extension to an open subset of $\R^n$.
We will refer to
$(N,\left(\varphi_\lambda \right)_{\lambda \in \Lambda})$ as a 
\emph{manifold
with corners}.

\end{defi}

It will be useful to have in
mind triangles (and
squares)  as the prototypical examples, the smooth atlas being
given by the set of all
diffeomorphisms
from an open subset
of the triangle (or square) to the open subsets
of $[0,\infty)^n$.
The notions of
tangent bundle, Riemannian metric, Levi-Civita connection 
and stochastic differential equations
can be carried
over to manifolds with corners.

\begin{lemme}[Quotient differentiable structure] \label{lem:RiemannianMwithCorners}
Suppose that $G$
is a reflection group of isometries
of $M$.
Then, 
\[N = M/G
 \mbox{ has a (unique) structure
of a Riemannian manifold with corners }\]
such that,
for every $x \in M$, there exists
a neighborhood $U \subset N$ 
of $q(x)$
together with
 an isometric immersion
$s: U \to M$
that is a local inverse of $q$, i.e.,
such that
\[q \circ s(x) = x \mbox{ for every }
x \in U.\]
\end{lemme}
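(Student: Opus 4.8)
The plan is to build the smooth atlas with corners on $N = M/G$ chart by chart, using the slice theorem for the finite group action together with the classification of finite reflection groups acting on a Euclidean vector space. First I would fix $x \in M$ and look at the stabiliser $G_x$ and the associated linear group $\mathbb{G}_x \subset O(T_x M)$. Since $G$ is a reflection group, $\mathbb{G}_x$ is a finite reflection group on the Euclidean space $T_x M$, so by the Chevalley--Shephard--Todd theorem (or more elementarily, by the explicit description of fundamental domains for finite reflection groups) there is a fundamental chamber $C \subset T_x M$ which is a simplicial cone, and the quotient map $T_x M \to T_x M / \mathbb{G}_x$ restricts to a homeomorphism $C \to T_x M/\mathbb{G}_x$; moreover a neighbourhood of the apex in $C$ is diffeomorphic (via a chart straightening the walls) to an open subset of $[0,\infty)^n$, where $n = \dim M$. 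I would then invoke the equivariant tubular neighbourhood / slice theorem: there is a $G_x$-invariant open ball $B \subset T_x M$ around the origin such that $\exp_x: B \to M$ is a $G_x$-equivariant diffeomorphism onto a $G_x$-invariant neighbourhood $W$ of $x$, and $W$ meets no other orbits than those it must, i.e. $q^{-1}(q(W))$ is the disjoint union of the $g \cdot W$ for $g$ ranging over coset representatives of $G_x$ in $G$. Consequently $q(W) \cong W/G_x \cong B/\mathbb{G}_x$, and composing with the chamber chart above gives a homeomorphism $\varphi_x : q(W) \to$ (open subset of $[0,\infty)^n$). This is the chart around $q(x)$.

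Next I would check that these charts are smoothly compatible. Given two points $x, x'$ with overlapping charts, on the overlap the transition map $\varphi_{x'} \circ \varphi_x^{-1}$ is, upstairs, given by composing the relevant exponential maps with an isometry of $M$ (some element $g \in G$ carrying one slice into the other), all of which are smooth; the only subtlety is that the quotient maps $B \to B/\mathbb{G}_x$ are not themselves diffeomorphisms, so one must argue that the induced map on quotients extends smoothly across the walls. For this I would use the local section: away from the walls $q$ is a local diffeomorphism, so the transition map is smooth there, and then invoke continuity of the derivatives up to the boundary (the straightening chart on the chamber was chosen precisely so that an equivariant smooth map descends to a smooth map with corners — again this is the content of the reflection-group quotient being smooth, essentially Schwarz's theorem on invariants or a direct computation with the elementary symmetric-type invariants generating $\R[T_xM]^{\mathbb{G}_x}$). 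This gives the smooth atlas with corners.

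For the Riemannian structure: on the open dense set $N^\circ = q(\{x : G_x = \{e\}\})$ — more precisely the locus where $q$ is a local diffeomorphism, which is $N$ minus the image of the walls — the metric $g$ on $M$ descends unambiguously to a metric on $N^\circ$ because $G$ acts by isometries. I would then argue that in each corner chart $\varphi_x$ this descended metric extends smoothly (with corners) to all of $q(W)$: pulling back by $\varphi_x^{-1}$ and then by the chamber-straightening map, we are looking at the pushforward of the flat metric on $B \subset T_x M$ under $B \to B/\mathbb{G}_x$, expressed in the invariant coordinates, and this is a classical smooth object. Uniqueness of the whole structure follows because any two such structures agree on the dense open set $N^\circ$ and corner-smooth tensors agreeing on a dense open set agree everywhere.

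Finally, for the local isometric section: given $x \in M$, take $U = q(W)$ as above and define $s : U \to M$ by sending $q(y) \in U$ to the unique representative of the $G_x$-orbit of $y$ lying in $\exp_x(\overline{C} \cap B)$ (the closed fundamental chamber intersected with the slice). On the open part this is the inverse of the local diffeomorphism $q$, hence an isometric immersion; the chamber chart shows it extends smoothly with corners to the walls, and $q \circ s = \mathrm{id}_U$ by construction. The identity $q \circ s(x) = x$ in the statement is then immediate since $x$ is the apex. I expect the main obstacle to be the compatibility/smoothness argument at the walls and corners — verifying that equivariant smooth data on $T_x M$ descends to genuinely smooth-with-corners data on $T_x M/\mathbb{G}_x$ — which is where the reflection-group hypothesis is essential and where one must be careful to quote the right invariant-theory statement (that the ring of invariants is a polynomial ring and the orbit map realises the chamber as a manifold with corners) rather than hand-wave.
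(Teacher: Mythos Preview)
Your approach is the paper's: linearise the $G_x$-action via $\exp_x$, identify the local quotient with $B/\mathbb G_x$, and use that the closed chamber of the finite reflection group $\mathbb G_x$ is a simplicial cone to get corner charts; you are in fact more thorough than the paper, which does not discuss chart compatibility, uniqueness, or the construction of the section $s$ at all.

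One caveat worth flagging: Chevalley--Shephard--Todd and Schwarz's theorem endow $T_xM/\mathbb G_x$ with a smooth structure via the basic invariants, and this is \emph{not} the manifold-with-corners structure required here. Already for $\mathbb Z/2$ acting on $\mathbb R$ the invariant coordinate is $t^2$ while the chamber coordinate is $t$; the section $s$ is smooth for the latter but not for the former. So for both the charts and their compatibility you should stay with the direct chamber argument (your parenthetical ``more elementarily'', and the paper's citation of \cite{Hum}): near any overlap point the transition map is the restriction to $C_x$ of a single smooth map $\exp_{x'}^{-1}\!\circ g\circ\exp_x$ for a fixed $g\in G$, which sends walls to walls and hence $C_x$ into $C_{x'}$, so it is smooth with corners without any appeal to invariant theory.
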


We fix some notation about
the boundary of $N$.
The set of points that, by some 
chart~$\varphi_{\lambda}$, correspond
to points of the
(topological) boundary of 
$[0,\infty)^n \subset \mathbb R^n$
will be called the \emph{boundary}
and will be denoted by
$\partial N$.
The points that
correspond
to the singular points of
the boundary of 
$[0,\infty)^n$ will be called
the \emph{corner points} and the set
consisting of them will be denoted by
$\mathcal CN$. 
A boundary point $x$ that
is not a corner point will be called
a \emph{regular boundary point}
and there is a 
well-defined unit inward-pointing normal
vector~$\nu_x \in T_xN$ at $x$.
The complement of $\partial N$,
called the interior
of $N$, will be denoted by~$\mathring{N}$.

We will be interested on the 
reflected Brownian motion on these
manifolds. Similarly to the case of
manifolds with boundary, \emph{the reflected
Brownian motion} on $N$ is
a continuous
stochastic process $\left(\beta_t\right)_{t \in [0,1]}$
on $N \setminus \mathcal CN$
that satisfies the following condition. For every
$C^2$ function
$f: N \to \mathbb R$
such that
$\mathrm d f_x \cdot \nu_x = 0$
at every regular boundary point $x$,
we have that
\[f(X_t) - \int_0^t 
\Delta f (X_s) \mathrm d s\]
is a martingale
with respect to the filtration
$
\sigma(\left(\beta_s\right)_{s \in [0,t]})$.

\begin{theo}[Existence for quotients]
\label{th:ToQuotient}
Let $N$ be the quotient of $M$
by a reflection group and denote by
$q:M \to N$ its quotient map.
Let $\pi$ be a probability
measure on $M\times M$
with both marginals equal to $vol$
and such that
$H(\pi|vol \otimes vol ) < \infty$.
Then,
\begin{center}
\eqref{iBS}$_{M,\pi}$
admits a solution
$\Rightarrow$
\eqref{iBS}$_{N,(q\times q)_* \pi}$
admits a solution.
\end{center}
Moreover, if \eqref{iBS}$_{M,\pi}$ admits
a solution for every $\pi$ with finite entropy, then
\eqref{iBS}$_{N,\widetilde \pi}$ admits
a solution for every $\widetilde \pi$ with finite entropy.
\end{theo}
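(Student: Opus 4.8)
The plan is to transport a solution of \eqref{iBS}$_{M,\pi}$ to $N$ by pushing it forward along the quotient map, and then to invoke the abstract existence criterion for \eqref{BS}. Write $R_M\in\PP(\Omega_M)$ for the law of Brownian motion on $M$ issued from $vol$ (since $\partial M=\emptyset$ this is the reflected Brownian motion), $R_N\in\PP(\Omega_N)$ for the law of the reflected Brownian motion on $N$, and $q_\Omega:\Omega_M\to\Omega_N$, $\omega\mapsto q\circ\omega$, which is well defined and continuous because $q$ is, and satisfies $X_t\circ q_\Omega=q\circ X_t$ for every $t$ as well as $(X_0,X_1)\circ q_\Omega=(q\times q)\circ(X_0,X_1)$.

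\textbf{Step 1 (the projection of Brownian motion is the reflected Brownian motion on $N$).} I would first prove $(q_\Omega)_*R_M=R_N$ through the martingale problem characterising $R_N$. Let $f\in C^2(N)$ satisfy the Neumann condition $\d f_x\cdot\nu_x=0$ at every regular boundary point $x$. Using the isometric local sections provided by Lemma~\ref{lem:RiemannianMwithCorners}, one checks that $f\circ q$ is a $G$-invariant $C^2$ function on $M$ with $\Delta(f\circ q)=(\Delta f)\circ q$: the point is that near a wall $q$ reads, in suitable coordinates, as $x_1\mapsto|x_1|$, and the Neumann condition on $f$ is precisely what makes $x_1\mapsto f(|x_1|)$ twice continuously differentiable, while $\Delta$ commutes with the isometric sections. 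Hence $f\circ q$ is an admissible test function for Brownian motion on $M$ and, pushing the resulting martingale forward by $q_\Omega$, the process $q\circ\beta$ solves the martingale problem defining $R_N$. Moreover $(q_\Omega)_*R_M$ gives full mass to paths avoiding the corner set, because $\mathcal CN$ is the image under $q$ of the set of points lying on two distinct walls, which is a finite union of submanifolds of codimension at least two and is therefore almost surely never visited by Brownian motion on $M$. By well-posedness of the martingale problem, $(q_\Omega)_*R_M=R_N$. The same local description also gives $q_*vol=vol$ on $N$: the quotient being $|G|$-to-one off a null set and $q$ locally isometric, both measures equal the normalised Riemannian volume of $N$.

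\textbf{Step 2 (a finite-entropy competitor on $N$, and conclusion of the first implication).} Let $Q^\ast$ be a solution of \eqref{iBS}$_{M,\pi}$, which exists by hypothesis, and put $\widetilde Q=(q_\Omega)_*Q^\ast$. From $X_t\circ q_\Omega=q\circ X_t$ and $q_*vol=vol$ we get $\widetilde Q_t=q_*(Q^\ast_t)=q_*vol=vol$ for every $t\in[0,1]$; from $(X_0,X_1)\circ q_\Omega=(q\times q)\circ(X_0,X_1)$ we get $\widetilde Q_{01}=(q\times q)_*\pi$; and since relative entropy does not increase under a common measurable pushforward and $(q_\Omega)_*R_M=R_N$,
\[
H\big(\widetilde Q\mid R_N\big)=H\big((q_\Omega)_*Q^\ast\mid(q_\Omega)_*R_M\big)\le H\big(Q^\ast\mid R_M\big)<\infty .
\]
Thus $\widetilde Q$ satisfies all the constraints of \eqref{iBS}$_{N,(q\times q)_*\pi}$ and has finite entropy with respect to $R_N$. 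Since \eqref{iBS}$_{N,\cdot}$ is an instance of the strictly convex problem \eqref{BS} on the Riemannian manifold with corners $N$ (its reflected Brownian motion, normalised volume and boundary data having been supplied by Step 1 and Lemma~\ref{lem:RiemannianMwithCorners}), the criterion recalled after \eqref{BS}---existence and uniqueness of the minimiser as soon as the convex constraint set contains one measure of finite entropy---provides a (unique) solution of \eqref{iBS}$_{N,(q\times q)_*\pi}$.

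\textbf{Step 3 (the ``moreover'' part) and the main difficulty.} Assume now that \eqref{iBS}$_{M,\pi}$ is solvable for every $\pi$ of finite entropy, and let $\widetilde\pi$ on $N\times N$ have both marginals equal to $vol$ and $H(\widetilde\pi\mid vol\otimes vol)<\infty$. I would lift it to $M\times M$: fix a measurable section $z\mapsto x_z\in q^{-1}(z)$ and set $\lambda_z=\frac1{|G|}\sum_{g\in G}\delta_{g\cdot x_z}$, a $G$-invariant probability on the fibre, independent of the chosen $x_z$ and measurable in $z$, with $q_*\lambda_z=\delta_z$ and $\int_N\lambda_z\,vol(\d z)=vol$ (the latter because $G$ permutes isometrically the $|G|$ sheets of $q$ off a null set). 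Define $\pi:=\int_{N\times N}\lambda_z\otimes\lambda_w\,\widetilde\pi(\d z\,\d w)\in\PP(M\times M)$. Both marginals of $\pi$ equal $\int_N\lambda_z\,vol(\d z)=vol$, and $(q\times q)_*\pi=\int_{N\times N}(q_*\lambda_z)\otimes(q_*\lambda_w)\,\widetilde\pi=\widetilde\pi$. Writing $\Psi:(z,w,g,g')\mapsto(g\cdot x_z,g'\cdot x_w)$ from $N\times N\times G\times G$ to $M\times M$ and $u_G$ for the uniform law on $G$, one has $\pi=\Psi_*\big(\widetilde\pi\otimes u_G\otimes u_G\big)$ and $vol\otimes vol=\Psi_*\big(vol\otimes vol\otimes u_G\otimes u_G\big)$, so contraction of relative entropy under $\Psi$ gives $H(\pi\mid vol\otimes vol)\le H(\widetilde\pi\mid vol\otimes vol)<\infty$. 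By assumption \eqref{iBS}$_{M,\pi}$ has a solution, hence by the first implication so does \eqref{iBS}$_{N,(q\times q)_*\pi}=$\,\eqref{iBS}$_{N,\widetilde\pi}$. The routine ingredients here (pushforwards, disintegration, contraction of relative entropy, the variational criterion) are soft; the one genuinely delicate step is Step~1, namely that the projection of Brownian motion really is the reflected Brownian motion on the manifold with corners $N$---the care being in the regularity of the pulled-back test functions at walls and corners, in the appearance of exactly the Neumann boundary condition, and in excluding time spent on $\mathcal CN$---all of which rests on Lemma~\ref{lem:RiemannianMwithCorners} and on $G$ acting by isometries.
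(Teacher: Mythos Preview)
Your proposal is correct and follows essentially the same route as the paper: push forward a competitor via $q_\Omega$ using entropy contraction together with $(q_\Omega)_*R_M=R_N$ (which the paper isolates as Lemma~\ref{lem:QuotientBrownian} and the lemma following it), and for the ``moreover'' part lift $\widetilde\pi$ by averaging over $G$---your fibre measures $\lambda_z=|G|^{-1}\sum_g\delta_{g\cdot x_z}$ yield exactly the paper's $\pi=|G|^{-2}\sum_{(g,h)}(q|_{gU}\times q|_{hU})^{-1}_*\widetilde\pi$, written with a measurable section in place of the fundamental domain $U$ of Lemma~\ref{lem:Distinguished} (the paper in fact obtains $H(\pi\mid\sigma\otimes\sigma)=H(\widetilde\pi\mid\tilde\sigma\otimes\tilde\sigma)$, but your inequality via contraction under $\Psi$ is all that is needed). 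The one place where the paper is more careful than your sketch is Step~1 at the corners: rather than asserting $f\circ q\in C^2(M)$ outright, the paper only claims $C^2$ on $M\setminus q^{-1}(\mathcal CN)$, replaces $F=f\circ q$ by a genuine $C^2$ function $F^\varepsilon$ agreeing with $F$ outside an $\varepsilon$-neighbourhood of $q^{-1}(\mathcal CN)$, runs the martingale identity up to the hitting time $T_\varepsilon$ of that neighbourhood, and then lets $\varepsilon\downarrow0$ using $T_\varepsilon\uparrow\infty$ (since $q^{-1}(\mathcal CN)$ is a finite union of submanifolds of codimension $\ge2$)---this sidesteps the second-order regularity check of $f\circ q$ at corner preimages that your sketch leaves implicit.
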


We end this section with a more exotic example.
 Let $\Omega$ the space of paths from~$[0,1]$ to~$M=\R^n$. 
Choose any probability measure $\chi 
\in \mathcal P(\mathbb R^n)$ and
let $R$ 
be
the Brownian motion whose initial position
has law $\chi$. 
We are looking to the following problem 
\begin{equation}\label{E.gamma}
H(P|R)\to \min; \big[P_t=\NN(0,1/4 \id), \forall 
t \in [0,1] \big], P_{01}=\pi,\tag{$\text{BS}_\gamma$}
\end{equation}
where $\pi$ is a probability measure 
on $M^2$. Through this example, we intend to challenge the assumptions of compactness and incompressibility.

\begin{theo}[Existence for Gaussian marginals]\label{theo:Gaussian}
The Brenier-Schrödinger problem \ref{E.gamma} admits a unique solution if and only if $H(\pi|R_{01})<\infty$.
\end{theo}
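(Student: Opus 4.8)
The plan is to use the general criterion recalled after \eqref{BS}: being a strictly convex problem with convex constraints, \eqref{E.gamma} admits a unique solution if and only if its feasible set contains some $P$ with $H(P|R)<\infty$. Write $\gamma=\NN\!\big(0,\tfrac14\,\id\big)$. For the feasible set to be nonempty one needs $\pi$ to have both marginals equal to $\gamma$, which we assume throughout (this is implicit in the formulation of \eqref{E.gamma}). It then remains to prove that \emph{a feasible $P$ with finite entropy exists if and only if $H(\pi|R_{01})<\infty$}. The ``only if'' direction is immediate: the endpoint map $(X_0,X_1)\colon\Omega\to\R^n\times\R^n$ is continuous, pushes $P$ onto $\pi$ and $R$ onto $R_{01}$, and relative entropy does not increase under pushforwards, so $H(\pi|R_{01})\le H(P|R)<\infty$.

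For the ``if'' direction I would exhibit an explicit feasible competitor. Let $(\xi,\xi')$ have law $\pi$, let $(W_t)_{t\in[0,1]}$ be an independent standard Brownian bridge from $0$ to $0$ in $\R^n$, and let $P$ be the law on $\Omega$ of the continuous path
\[
X_t=(1-2t)^+\,\xi+(2t-1)^+\,\xi'+W_t,\qquad t\in[0,1].
\]
Then $X_0=\xi$ and $X_1=\xi'$, so $P_{01}=\pi$; and for each $t$ the vector $X_t$ is a sum of independent centred Gaussians — here one crucially uses that the marginals of $\pi$ are the \emph{Gaussian} measure $\gamma$ — with covariance $\big(\tfrac14(1-2t)^2+t(1-t)\big)\,\id=\tfrac14\,\id$ for $t\in[0,\tfrac12]$, and $\tfrac14\,\id$ for $t\in[\tfrac12,1]$ by the symmetry $t\leftrightarrow 1-t$, $\xi\leftrightarrow\xi'$. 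Hence $X_t\sim\gamma$ for every $t$, so $P$ is feasible.

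It remains to check that $H(P|R)<\infty$, and I would do so by disintegrating $P$ and $R$ along the triple $(X_0,X_{1/2},X_1)$. Conditionally on $(X_0,X_{1/2},X_1)=(x,z,y)$, both $P$ and $R$ reduce to the \emph{same} measure — the concatenation of a Brownian bridge from $x$ to $z$ on $[0,\tfrac12]$ with an independent Brownian bridge from $z$ to $y$ on $[\tfrac12,1]$: under $R$ this is the Markov property of Brownian motion; under $P$ it holds because conditioning on $X_{1/2}=z$ forces $W_{1/2}=z$, and adding the affine function $t\mapsto(1-2t)x$ to a Brownian bridge from $0$ to $z$ produces a Brownian bridge from $x$ to $z$ (and symmetrically on $[\tfrac12,1]$, for $\xi'$ and $y$), the two halves being conditionally independent. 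Hence a version of $\mathrm dP/\mathrm dR$ depends only on $(X_0,X_{1/2},X_1)$, so $H(P|R)=H(P_{0,1/2,1}|R_{0,1/2,1})$. Now $X_{1/2}=W_{1/2}\sim\gamma$ is independent of $(X_0,X_1)$, so $P_{0,1/2,1}=\pi\otimes\gamma$, whereas $R_{0,1/2,1}$ — the law of $(X_0,X_{1/2},X_1)$ under $R$ — is $\chi$ followed by two Gaussian $\NN(\cdot,\tfrac12\,\id)$ heat steps; since all these Gaussian densities are strictly positive, the Radon--Nikodym derivative of $\pi\otimes\gamma$ with respect to $R_{0,1/2,1}$ is $\frac{\mathrm d\pi}{\mathrm dR_{01}}(x,y)$ times a ratio of Gaussian densities, and therefore $H(P|R)$ equals $H(\pi|R_{01})$ plus the $(\pi\otimes\gamma)$-integral of a function quadratic in $(x,z,y)$. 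That integral is finite because $\gamma$ and $\pi$ (whose marginals are $\gamma$) have finite second moments, and $H(\pi|R_{01})<\infty$ by hypothesis; thus $H(P|R)<\infty$, which completes the proof.

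The main obstacle is the ``if'' direction, and within it the choice of competitor: one must produce a \emph{continuous} process whose one-time marginals are \emph{all} equal to $\gamma$ while its endpoint law $\pi$ is essentially arbitrary. The construction above works precisely because $\pi$ has Gaussian marginals and because routing the path through an independent, $\gamma$-distributed midpoint simultaneously (i) makes every conditional law coincide with that of $R$, so that no Girsanov correction enters, and (ii) collapses the entropy computation to a three-point, finite-dimensional one. A Markovian interpolation through intermediate times cannot meet both the all-time marginal constraint and an arbitrary (non-Gaussian) endpoint law, which is why the non-Markovian pivot is essential.
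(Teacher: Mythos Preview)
Your proof is correct and follows essentially the same route as the paper. Your competitor $P$ is precisely the measure the paper calls $Q$: the paper defines it by disintegration, $Q=\int R(\cdot\mid X_0=x,X_{1/2}=z,X_1=y)\,\pi(\mathrm dx\,\mathrm dy)\,\gamma_{1/4}(\mathrm dz)$, while you build the same object pathwise via $X_t=(1-2t)^+\xi+(2t-1)^+\xi'+W_t$ and then observe that the conditional law given $(X_0,X_{1/2},X_1)$ coincides with that of $R$; the marginal check and the entropy reduction to $H(\pi\mid R_{01})$ plus a quadratic integral are then the same computation in both versions.
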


\section{Girsanov theorem}\label{sec:gene}
\label{sec:Girsanov}
\setcounter{equation}0

This section contains 
a version of Girsanov theorem
which is a 
translation to a manifold setting of the results from \cite{leo12b}.
This will make a link between entropy and kinetic energy and
will be useful for the
proof of Theorem \ref{theo:NavierStokes} and Theorem \ref{theo:ContinuityEquation}.

We need to use a 
different but equivalent
description
of the 
reflected Brownian motion.
Consider any vector field
 $\nu:M\to TM$ 
such that $\nu|_{\partial M}$ is 
the inward-pointing unit normal vector field. 
By using an embedding of $M$ into 
an Euclidean space, we may construct
a smooth family
$(\sigma_x)_{x \in M}$ of linear maps
$\sigma_x:\mathbb R^p \to T_x M$
such that $\sigma_x \sigma_x^* = \mathrm{id}_{T_xM}$.
By smooth we mean that
 the map $\sigma:M \times \mathbb R^p
\to TM$ defined by $\sigma(x,w) = \sigma_x(w)$
is smooth.
The reflected Brownian motion on $M$
can be 
defined as a semi-martingale
$(\beta_t)_{t \in [0,1]}$ on $M$  that
 solves the following Skorokhod problem.  
There exists a Brownian motion in $\R^p$ 
and a non-decreasing process 
$(L_s)_{s\in[0,1]}$ such that 
\[\mathrm d\beta_t =
\sigma(\beta_t) 
\mathrm dW_t + \nu_{\beta_t}\mathrm dL_t
\quad \mbox{ and }
\int_0^1{\ind_{\mathring{M}}(\beta_s)
\mathrm dL_s} = 0.\]
More information can be found
in \cite{AL}.
We may notice that the process $L$ is the local time of~$\beta$ at $\partial M$ and that $\beta$
is a reflected Brownian motion 
in the sense defined in the introduction, Section
\ref{Section1}.

Now, we are interested in the description of 
solutions whenever they exist. 
Recall that $R$ denotes
the law of $(\beta_t)
_{t \in [0,1]}$
whose initial position
follows the law $vol$.
Since the reference measure $R$ is a semi-martingale measure, the classical Girsanov theory implies 
that a solution $P$ of (BS) will also be a semi-martingale. Moreover, using the finite entropy condition, we have stronger boundedness properties on the Girsanov velocity vector field. 
Theorem \ref{theo:Girsanov} and Theorem 
\ref{theo:GirsanovDensity} 
below are adaptations
of results from \cite{leo12b}
to a manifold setting. They use 
a variational viewpoint 
of the entropy to improve Girsanov theorem under a finite entropy condition. We give here sketches of the proofs and objects in a manifold language. For a wider view, see \cite{leo12b} for the $\R^n$ setting and \cite{Hug2} for manifolds.
Let $B$ be the drift, defined on $1$-form valued processes by
\begin{equation}
\label{eq:DriftDefinition}
B_t(\alpha, \omega) = \int_0^t\langle\alpha_s(\omega_s), \nu_{\omega_s}\rangle\, \mathrm dL_s(\omega),
\end{equation}
where $\alpha_t\in\Gamma(T^*M)$ for all $t\in[0,1]$ and $\omega\in\Omega$. Notice that,
since $L$ is a function of bounded variation
uniquely defined except for a set of $R$-measure zero,
$B$ from 
\eqref{eq:DriftDefinition} is also 
uniquely defined except for a set of $R$-measure zero.
Let $A$ be the quadratic variation defined on bilinear form valued processes by 
\begin{equation}
\label{eq:QuadraticVariationDefinition}
A_t(h,\omega)= \int_0^t \left\langle 
\mathrm{Tr}\big(h_s(\omega_s)\big)
\right\rangle\, \mathrm ds,
\end{equation}
where $h_t\in\Gamma(T^*M\otimes T^*M)$ for all $t\in[0,1]$. 
Here $\langle \mathrm{Tr}(h)\rangle$
denotes the contraction of~$h$
using the Riemannian metric, so that
\eqref{eq:QuadraticVariationDefinition} is
well-defined everywhere and not just almost everywhere.
With this notation, the measure $R$ satisfies the martingale problem
$\MM\PP(B,A)$, i.e., $R$ is the unique 
probability measure such that
\begin{equation*}
M^f_t := f(X_t) - f(X_0) - B_t(\mathrm df) -
\frac{1}{2}A_t(\mathrm{Hess} f),
\end{equation*}
is an $R$-local martingale
for every $f\in\CC^\infty(M)$. 
We denote by 
$\mathrm dX$ its Itô derivative and by 
$\mathrm d^R_mX$ the martingale part of $\mathrm dX$ with respect to $R$ (see \cite[Definition 7.33]{Eme}). Both are infinitesimal vector fields.
The problem $\MM\PP(B,A)$ implies
 that
\begin{equation*}
\mathrm dX_t = 
\mathrm d^R_mX_t + \mathrm dB_t,\, R\text{-almost surely},
\end{equation*}
and 
\begin{equation*}
\mathrm d[X,X]_t = \mathrm dA_t,\, R\text{-almost surely}.
\end{equation*}
For the version
of Girsanov theorem we are interested
in, we will use the space
 $\GG$ of measurable functions $g : [0,1]\times \Omega \to T^*M$ such that $g_t(\omega)\in T^*_{\omega_t}M$
for every $t\in[0,1]$ and
for every $\omega\in\Omega$. 
For any probability measure $Q$ on $\Omega$, 
we define the semi-norm on $\GG$ 
\begin{equation*}
\|g\|_{Q}=\E_Q\left[\int_0^1 
\|g_t\|_{T^* M}^2 \mathrm d t\right]^{1/2}\hspace{-2mm}
= \E_Q\left[A_1(g\otimes g)\right]^{1/2}.
\end{equation*}
Identifying functions by using the semi-norm $\|.\|_{Q}$, we 
define the Hilbert spaces 
\begin{equation*}
\GG(Q)=\left\{g\in\GG : \|g\|_{Q}
<+\infty\right\}\quad\text{and}\quad \HH(Q) = \{g\in\GG(Q) : g\, \text{adapted}\}.
\end{equation*} 
Adapted means here that,
for every $t \in [0,1]$, the map
$\omega \mapsto g(t,\omega)$ is 
measurable with respect to the
completion of $\sigma((X_s)_{s \in [0,t]})$
using the Brownian motion law $R$,
where the map $X_s:\Omega \to M$
is the projection map
$X_s(\omega) = \omega_s$.

The following result is Girsanov theorem
for the family $(P^x)_{x \in M}$
of probability measures on $\Omega$ that satisfy
\[P=\int_M P^x \mathrm d P_0(x) 
\quad \mbox{ and }
\quad P^x\left(\{\omega \in \Omega: 
\omega(0)=x\}\right) = 1
\mbox{ for every } x \in M.\]
By taking a time reversal, it would
tell us something about
the family
$(\accentset{\leftharpoonup}{P}^y
)_{y \in M}$, defined in~\eqref{eq:Pbackward}, but we will not use this until
later.

\begin{theo}[Girsanov theorem]
\label{theo:Girsanov}
Let $P$ be such that $H(P|R)<\infty$.
Then,  for $P_0$-almost every $x \in M$,
the probability measure
$P^x$ 
is 
the law of a semi-martingale and there exists an adapted process $\zeta^x \in\HH(P^x)$ such that 
\[P^x\in\MM\PP(B+\hat{B}^x, A)
\quad \mbox{ and } \quad 
\hat{B}^x = 
 A(\zeta^x\otimes \cdot).\]

\end{theo}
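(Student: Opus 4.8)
The plan is to reduce the statement to the known Euclidean/manifold Girsanov theorem of \cite{leo12b,Hug2} applied after disintegration, upgraded using the finite entropy hypothesis. First I would record the additive decomposition of the entropy over the initial marginal: since $H(P|R)<\infty$, the disintegrations $(P^x)_{x\in M}$ and $(R^x)_{x\in M}$ satisfy $H(P|R)=H(P_0|vol)+\int_M H(P^x|R^x)\,\mathrm dP_0(x)$, so that $H(P^x|R^x)<\infty$ for $P_0$-almost every $x$. Fix such an $x$. Since $R^x$ solves the martingale problem $\MM\PP(B,A)$ started at $x$ (the reflected Brownian motion from $x$, whose semi-martingale description via the Skorokhod problem $\mathrm d\beta_t=\sigma(\beta_t)\,\mathrm dW_t+\nu_{\beta_t}\,\mathrm dL_t$ was recalled above), and $P^x\ll R^x$, classical Girsanov theory already gives that $P^x$ is the law of a semi-martingale and that its drift differs from that of $R^x$ by an absolutely continuous term with respect to $\mathrm d[X,X]_t=\mathrm dA_t$; that is, there is an adapted $1$-form valued process $\zeta^x$ with $P^x\in\MM\PP(B+\hat B^x,A)$ and $\hat B^x=A(\zeta^x\otimes\cdot)$. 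This is the content of writing $\mathrm dX_t=\mathrm d^{P^x}_mX_t+\mathrm dB_t+\mathrm d\hat B^x_t$ under $P^x$.

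The remaining and genuinely substantive point is the integrability $\zeta^x\in\HH(P^x)$, i.e.\ $\E_{P^x}[A_1(\zeta^x\otimes\zeta^x)]<\infty$. Here I would follow the variational strategy of \cite{leo12b}: express the relative entropy $H(P^x|R^x)$ via the Girsanov/Cameron–Martin density. Formally, $\frac{\mathrm dP^x}{\mathrm dR^x}=\exp\big(\int_0^1\langle\zeta^x_s,\mathrm d^{R^x}_mX_s\rangle-\tfrac12\int_0^1 A_s'(\zeta^x\otimes\zeta^x)\big)$ along the $P^x$-semi-martingale, and taking $\log$ and then $\E_{P^x}$ one gets, after showing the stochastic integral has zero $P^x$-expectation (a localization argument using a sequence of stopping times, exactly as in \cite{leo12b,Hug2}),
\[
H(P^x|R^x)=\frac12\,\E_{P^x}\!\left[\int_0^1\|\zeta^x_s\|_{T^*M}^2\,\mathrm ds\right]=\frac12\|\zeta^x\|_{P^x}^2.
\]
Finiteness of the left side then forces $\zeta^x\in\HH(P^x)$. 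The delicate step is justifying that this identity (an inequality $\tfrac12\|\zeta^x\|_{P^x}^2\le H(P^x|R^x)$ would already suffice) holds rigorously: one truncates $\zeta^x$, applies Girsanov on the truncated process where everything is legitimate, uses Fatou/monotone convergence on the quadratic term, and controls the martingale term via the Burkholder–Davis–Gundy inequality together with the entropy bound; the boundary local time $L$ contributes only to the non-random drift $B$ and the quadratic variation $A$ (which is everywhere well-defined via the Riemannian contraction $\langle\mathrm{Tr}(\cdot)\rangle$), so it causes no new difficulty beyond keeping track of the null sets on which $B$ is defined.

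Finally I would assemble: the set of $x$ for which all of the above holds has full $P_0$-measure (it is the intersection of the full-measure set where $H(P^x|R^x)<\infty$ with the full-measure sets on which the disintegrations and the drift $B$ are well-defined), and on this set the pair $(\zeta^x,\hat B^x)$ is the asserted object, with adaptedness of $\zeta^x$ inherited from the Girsanov construction. The measurability in $x$ — needed for the statement to be meaningful and for later use — follows from choosing a jointly measurable version of the disintegration $x\mapsto P^x$ and of the Radon–Nikodym densities, so that $x\mapsto\zeta^x$ is measurable as a map into $\GG$. I expect the main obstacle to be precisely the rigorous proof of the entropy–energy identity (or inequality) on a manifold with boundary, i.e.\ controlling the exponential local martingale and the reflection term simultaneously; everything else is bookkeeping and an appeal to \cite{leo12b,Hug2}.
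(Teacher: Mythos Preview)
Your argument is essentially sound, but it is \emph{not} the route the paper (and \cite{leo12b}) actually takes, despite your labeling it ``the variational strategy of \cite{leo12b}''. You first invoke classical Girsanov to produce a drift $\zeta^x$ that is a priori only locally square-integrable, and then upgrade to $\zeta^x\in\HH(P^x)$ by establishing the entropy--energy inequality $\tfrac12\|\zeta^x\|_{P^x}^2\le H(P^x|R^x)$ through the density formula plus stopping and monotone convergence. The paper instead bypasses classical Girsanov and the density formula entirely at this stage: it applies the Donsker--Varadhan variational inequality $\E_{P^x}[u]\le H(P^x|R^x)+\log\E_{R^x}[e^{u}]$ to the test random variables $u=N^h_1-\tfrac12[N^h,N^h]_1$ for $h\in\HH(P^x)\cap\HH(R^x)$, obtains the bound $|\E_{P^x}[N^h_1]|\le\sqrt{2H(P^x|R^x)}\,\|h\|_{P^x}$, and then produces $\zeta^x\in\HH(P^x)$ directly as the Riesz representative of the continuous linear form $h\mapsto\E_{P^x}[N^h_1]$ on $\HH(P^x)$. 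The martingale-problem statement $P^x\in\MM\PP(B+\hat B^x,A)$ is read off by testing against stair processes.

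The trade-off is this: the paper's duality-and-Riesz argument constructs $\zeta^x$ \emph{ab initio} in the correct Hilbert space and never needs the exponential density formula, which is deferred to the separate Theorem~\ref{theo:GirsanovDensity} (where localisation, a uniqueness property of the reflected Brownian motion, and a regularisation $P^x_n=(1-\tfrac1n)P^x+\tfrac1n R^x$ are all required). Your route front-loads exactly that machinery: to turn the ``formal'' density into the rigorous inequality you need, you will in effect be proving the stopped part of Theorem~\ref{theo:GirsanovDensity} first. This is legitimate, and the BDG step you mention is not really needed (zero expectation of the stopped $P^x$-martingale suffices, then monotone convergence on the quadratic term), but be aware that the structural claim ``classical Girsanov gives $\hat B^x=A(\zeta^x\otimes\cdot)$'' already relies on the fact that the additional finite-variation part is absolutely continuous with respect to $\mathrm dA_t=\mathrm dt$; this is standard but should be said. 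In short: your proof is a correct alternative that interleaves Theorems~\ref{theo:Girsanov} and~\ref{theo:GirsanovDensity}, whereas the paper separates them cleanly via the variational inequality.
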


\begin{remark}
In other words, 
$P^x$-almost surely,
 $\mathrm dX_t = 
\mathrm d_m^{P^x}X_t + \mathrm dA_t(\zeta^x\otimes\cdot) 
+ \nu_{X_t} \mathrm dL_t(X)$,  where the 
$P^x$-martingale part is equal 
to ~${\mathrm d_m^{P^x}X_t = \mathrm d_m^{R^x}X_t - 
\mathrm dA_t(\zeta^x\otimes\cdot)}$.
\end{remark}

\begin{proof}

Due to the chain rule for the entropy
\cite[Theorem C.3.1]{Dupuis}
\[H(P|R) = H(P_0|R_0)
+ \int_M H(P^x|R^x)
\mathrm d P_0(x),\]
we have that
$H(P^x|R^x)< \infty$
for $P_0$-almost
every $x \in M$.

For $h\in \HH(P^x)$, we would like
to define the processes $N^h$ by 
\begin{equation}
\label{eq:IntegralN}
N^h_t=\int_0^t\langle h_s, \mathrm d^{R^x}_m
X_s\rangle,\, 0\leq t\leq 1. 
\end{equation}
If $h\in\HH(P^x)\cap\HH(R^x)$, the process $N^h$ 
can be defined by \eqref{eq:IntegralN}. Its stochastic exponential~$\EE(N^h)= \exp(N^h - 
\frac{1}{2}[N^h,N^h])$ is a positive local martingale, so that it is a super-martingale and 
\begin{equation}\label{G.E5}
0\leq\E_{R^x}[\EE(N^h)_1]\leq 1 .
\end{equation} 
For $h\in\HH(P^x)\cap\HH(R^x)$, let $u$ be the function 
$u : \omega\in\Omega\mapsto 
N^h_1-\frac{1}{2}[N^h,N^h]_1$. The variational definition of the entropy, known as the
Donsker-Varadhan variational formula, implies that
\begin{equation*}
\E_{P^x}[u]-\log\E_{R^x}[e^u]\leq H(P^x|R^x)<+\infty.
\end{equation*}
Using \eqref{G.E5}, we have that
$$\E_{P^x}[u]\leq H(P^x|R^x).$$
Then, since 
$\E_{P^x}\left[[N^h,N^h]_1\right]=\|h\|_{\GG(P^x)}^2$ is finite, we have
$$\E_{P^x}[N^h_1]\leq H(P^x|R^x) +\frac{1}{2}\|h\|_{\GG(P^x)}^2.$$
Repeating the 
same calculation with $-h$ and $\lambda h$ for
$\lambda>0$, 
 for all $h\in\HH(P^x)\cap\HH(R^x)$
\begin{equation}\label{G.E6}
\lambda\left|\E_{P^x}[N^h_1]\right|\leq H(P^x|R^x)+\frac{\lambda^2}{2}\|h\|_{\GG(P^x)}^2.
\end{equation}	
If $\|h\|_{\GG(P^x)}\neq 0$, we can take $\lambda = \sqrt{2H(P^x|R^x)}\|h\|_{\GG(P^x)}^{-1}$ and obtain
\begin{equation*}
\left|\E_{P^x}[N_1^h]\right|\leq\sqrt{2H(P^x|R^x)}
 \|h\|_{\GG(P^x)}.
\end{equation*}
Letting $\lambda \to \infty$ 
in \eqref{G.E6}, this inequality remains valid if 
$\|h\|_{\GG(P^x)}=  0$. So the linear form~$h\mapsto \E_{P^x}[N^h_1]$ is continuous on $\HH(P^x)\cap\HH(R^x)$. This set is dense in $\HH(P^x)$ since it contains the dense set of stair 
processes
\begin{equation*}
h :(t,\omega)\in[0,1]\times \Omega\mapsto \sum_{i=1}^k \sigma(X_t)h_i\ind_{]S_i,T_i]},
\end{equation*}
with $k\in\N$, $(h_i)_{1\leq i\leq k}\in\R^n$ and $S_i<T_i\leq S_{i+1}$ stopping times.
So,
$h\mapsto \E_{P^x}[N^h_1]$ 
extends linearly in a unique continuous way
to
$\HH(P^x)$. By Riesz representation theorem, there exists a process $\zeta^x\in\HH(P^x)$ dual to 
this linear form, i.e
\begin{equation*}
\E_{P^x}\left[\int_0^1{\langle 
\mathrm df, \mathrm d^{R^x}_mX_t\rangle}\right] = \E_{P^x}\left[\int_0^1{\langle 
\mathrm df, \mathrm d
A_t(\zeta^x_t\otimes \cdot)\rangle}\right].
\end{equation*}
In conclusion, under $P$, $X$ is a semi-martingale with quadratic variation $A$ and drift $B+\hat{B}$.

\end{proof}

We remark that using 
the classical Girsanov theory we could only 
have proved that,~$P^x$-almost surely,
$A_t(\zeta^x\otimes\cdot)<+\infty$.
From now on, $\zeta^x_t$ will be identified, as a vector field, with the drift $\hat{B}_t = A_t(\zeta^x\otimes\cdot)$. We show in Section \ref{sec:kin} that it is the Nelson forward stochastic velocity of $P^x$.

Léonard's approach to Girsanov theory also gives us an expression of the density of
 $P^x$ with respect to $R^x$ in terms of $\zeta^x$.
This will be essential
for the proof of
Theorem \ref{theo:NavierStokes}.

\begin{theo}[Density in terms of velocity]
\label{theo:GirsanovDensity}
With the notation of Theorem \ref{theo:Girsanov}, 
for $P_0$-almost every
$x \in M$, the density of $P^x$ is given by
\[
\frac{\mathrm dP^x}{\mathrm dR^x}= 
\ind_{\left\{\frac{\mathrm dP^x}{\mathrm dR^x}>0\right\}}\exp\left(\int_0^1\langle\zeta^x_t, \mathrm d^{P^x}_mX_t\rangle -\frac{1}{2}\int_0^1
\|\zeta^x_s\|^2\, \mathrm ds\right).
\]
\end{theo}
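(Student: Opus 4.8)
The plan is to adapt Léonard's argument \cite{leo12b} to the reflected Brownian motion on $M$. Fix $x$ in the full $P_0$-measure set where Theorem \ref{theo:Girsanov} applies; then $H(P^x|R^x)<\infty$, hence $P^x\ll R^x$ and $Z^x:=\mathrm dP^x/\mathrm dR^x$ exists. On the event $\{Z^x=0\}$ the left-hand side vanishes, and so does the right-hand side because of the indicator, so the identity there is trivial; it therefore suffices to prove it $R^x$-almost surely on $\{Z^x>0\}$. On that event $R^x$ and $P^x$ are mutually absolutely continuous (if $A\subseteq\{Z^x>0\}$ and $P^x(A)=0$ then $\int_A Z^x\,\mathrm dR^x=0$ with $Z^x>0$ on $A$, forcing $R^x(A)=0$), so it is equivalent to prove the formula $P^x$-almost surely.

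First I would realise $Z^x$ as a stochastic exponential. Since the martingale problem $\MM\PP(B,A)$ characterising $R$ is well posed, $R^x$ enjoys the martingale representation property with respect to $(\mathcal F_t)$: every $R^x$-local martingale is a stochastic integral against the martingale part $\mathrm d^{R^x}_mX$. Applying this to the positive (hence uniformly integrable) $R^x$-martingale $Z^x_t=\E_{R^x}[Z^x\,|\,\mathcal F_t]$ yields, on $\{Z^x>0\}$ where $Z^x_t>0$ for all $t$, an adapted $\gamma^x$ with $Z^x_t=\EE\big(\int_0^{\cdot}\langle\gamma^x_s,\mathrm d^{R^x}_mX_s\rangle\big)_t$ and $\int_0^1\|\gamma^x_s\|^2\,\mathrm ds<\infty$ $R^x$-almost surely (the exponential staying finite and positive forces the quadratic variation to be finite; note this is only almost-sure finiteness, not an $\HH(R^x)$ bound). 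All the martingales involved are continuous because $R$ is a diffusion measure, so there are no jump corrections.

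Next I would identify $\gamma^x$ with $\zeta^x$. Set $T_n=\inf\{t:\int_0^t\|\gamma^x_s\|^2\,\mathrm ds\ge n\}\wedge 1$, so $T_n\uparrow 1$ on $\{Z^x>0\}$ and the stopped exponential $\EE\big(\int\langle\gamma^x,\mathrm d^{R^x}_mX\rangle\big)^{T_n}$ is a genuine $R^x$-martingale (its quadratic variation is bounded by $n$, so Novikov's criterion applies). Classical Girsanov on $\mathcal F_{T_n}$ then shows that, under $P^x$, the stopped process has quadratic variation $A_{\cdot\wedge T_n}$ and finite-variation part $B_{\cdot\wedge T_n}+A_{\cdot\wedge T_n}(\gamma^x\otimes\cdot)$; letting $n\to\infty$ gives $P^x\in\MM\PP(B+A(\gamma^x\otimes\cdot),A)$. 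Comparing with $P^x\in\MM\PP(B+A(\zeta^x\otimes\cdot),A)$ from Theorem \ref{theo:Girsanov} and using uniqueness of the finite-variation part of the $P^x$-semi-martingale $X$ (tested against $f\in\CC^\infty(M)$), we get $\gamma^x=\zeta^x$ for $\mathrm dt\otimes P^x$-almost every $(t,\omega)$; the common boundary drift $B$ cancels in this comparison, so no separate argument at $\partial M$ is needed. Finally, using $\mathrm d^{R^x}_mX_t=\mathrm d^{P^x}_mX_t+\mathrm dA_t(\zeta^x\otimes\cdot)$ from the remark after Theorem \ref{theo:Girsanov}, I would rewrite the exponent of $\EE(\cdot)_1=Z^x$ in terms of the $P^x$-martingale part of $X$, which produces the stated formula on $\{Z^x>0\}$.

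The main obstacle is exactly what makes this more than a direct application of classical Girsanov: a priori $\zeta^x$ lies only in $\HH(P^x)$ and need not lie in $\HH(R^x)$, so $\EE\big(\int\langle\zeta^x,\mathrm d^{R^x}_mX\rangle\big)$ is not known to be a true $R^x$-martingale and the density cannot simply be read off as an exponential martingale under $R^x$. Working on $\{Z^x>0\}$ (where the two measures are equivalent and the exponential is automatically finite and positive), obtaining the integrand from martingale representation rather than from an a priori Girsanov drift computation, and pinning it to $\zeta^x$ by uniqueness of the semi-martingale decomposition, is what resolves this; the finite entropy hypothesis enters only through Theorem \ref{theo:Girsanov}, which supplies the adapted $\zeta^x\in\HH(P^x)$ against which the comparison is made.
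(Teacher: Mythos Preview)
Your argument is correct, but it takes a different route from the paper's sketch. The paper works in the ``forward'' direction: starting from the drift $\zeta^x$ supplied by Theorem~\ref{theo:Girsanov}, it forms the stochastic exponential $\EE(N)$ with $N_t=\int_0^t\langle\zeta^x_s,\mathrm d_m^{R^x}X_s\rangle$, localises at the stopping times $\sigma_k=\inf\{t:A_t(\zeta^x\otimes\zeta^x)\ge k\}$, and uses that the resulting probability $\EE(N)_{\sigma_k}R^{\sigma_k}$ solves the stopped martingale problem $\MM\PP\big((B+\hat B)_{\cdot\wedge\sigma_k},A_{\cdot\wedge\sigma_k}\big)$. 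Uniqueness of the solution to this martingale problem then identifies it with $P^{x,\sigma_k}$; but passing to the limit $\sigma_k\uparrow 1$ requires that the limit hold $R^x$-almost surely, which is only guaranteed when $P^x\sim R^x$. The paper therefore adds a third step: a regularisation $P^x_n=(1-\tfrac1n)P^x+\tfrac1n R^x$, which is equivalent to $R^x$, and a limiting argument as $n\to\infty$.

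You instead work in the ``backward'' direction: starting from the density $Z^x=\mathrm dP^x/\mathrm dR^x$, you realise $Z^x_t$ on $\{Z^x>0\}$ as a stochastic exponential via the martingale representation property for $R^x$ (which, as you note, follows from well-posedness of $\MM\PP(B,A)$), obtain an integrand $\gamma^x$, and then pin $\gamma^x$ to $\zeta^x$ by comparing the two $P^x$-semimartingale decompositions of $X$. This neatly sidesteps the regularisation step, since you work on $\{Z^x>0\}$ (of full $P^x$-measure) from the outset and never need $\sigma_k\uparrow 1$ under $R^x$. The price is that you invoke martingale representation for the reflected Brownian motion on $M$, which the paper's argument does not need explicitly; both approaches ultimately rest on the same well-posedness of the martingale problem, but they exploit it at different points. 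Your stopping at $T_n$ together with Novikov is a little redundant (Girsanov for an absolutely continuous change of measure applies directly once $\int_0^1\|\gamma^x_s\|^2\,\mathrm ds<\infty$ $P^x$-a.s.), though it does no harm.
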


\begin{proof}[Sketch of the proof]
The proof is divided in three parts. Firstly, we prove a change of measure formula for stopped processes. This is the following well-known argument. 
We define the sequence 
$(\sigma_k)_{k \geq 1}$ of stopping times
 by
\begin{equation*}
\sigma_k =\inf\left\{t\in[0,1] : A_t(\zeta^x\otimes\zeta^x)\geq k\right\},
\end{equation*}
where, since we are thinking
on subsets of $[0,1]$, 
we use the convention that
the infimum of the empty set is $1$.
These stopping times localise the 
semi-martingale 
\begin{equation*}
N_t =\int_0^t\langle \zeta^x_s, \mathrm d^{R^x}_mX_s\rangle,\, 0\leq t\leq 1.
\end{equation*}
Let $R^{\sigma_k}$ denote
the law of $X_{\cdot \wedge \sigma_k}$ when
$X$ follows the law $R^x$
and let 
$\EE(N)_{\sigma_k}$
denote the stochastic exponential
of $N$ at the time $\sigma_k$.
Hence, the measure $Q_k = \EE(N)_{\sigma_k}R^{\sigma_k}$ is a probability measure satisfying the martingale problem $\MM\PP\big((B+\hat{B})_{\cdot\wedge\sigma_k}, A_{\cdot\wedge\sigma_k}\big)$.
As a second step, using the additional assumption 
that $P$ is equivalent to $R$, 
we prove the theorem. Here, the key argument is a uniqueness property satisfied by the reflected Brownian motion: $R^x$ is the unique measure in $\MM\PP(B,A)$ absolutely continuous with respect to $R^x$ starting from $R_0$. Property gives us the density of $P^{\sigma_k}$. The equivalence assumption is used to have $\sigma_k\to+\infty$ $R^x$-a.s and obtain the density of $P^x$.  We finish with a regularisation argument. The measure $P^x_n = (1-\frac{1}{n})P^x+\frac{1}{n}R^x$ is equivalent to $R^x$ and converge to $P^x$ in a sufficiently strong sense to obtain the result at the limit.
\end{proof}

\begin{remark}[Entropy and kinetic energy]
\label{rem:EntropyKineticEnergy}
The proof of Theorem 
\ref{theo:GirsanovDensity}
also
works for
$P$ and $R$ instead of $P^x$
and $R^x$.
As a consequence,
we would obtain that 
if  $H(P|R)<\infty$,
\begin{equation*}
H(P|R) = H(P_0|R_0) +\frac{1}{2}\E_P\left[\int_0^1
\|\zeta_t\|^2\, \mathrm dt\right],
\end{equation*}
where $\zeta$
can be seen as a forward
stochastic velocity
as in \eqref{eq:ForwardStocVelocity}
by using $P$ instead of $P^x$.
This formula for the entropy
 makes a parallel between the Brenier problem, 
as the minimisation of a 
\emph{classical} kinetic energy, and Brenier-Schrödinger problem, as the minimisation of a \emph{stochastic} kinetic energy in Nelson's sense. The advantage of an entropy formulation of the problem is the convex optimisation tools. 

\end{remark}

\section{Proof of the Navier-Stokes equations and the continuity equation}\label{sec:kin}
\setcounter{equation}0


This section will be devoted
to the proof of Theorem \ref{theo:NavierStokes}
and Theorem \ref{theo:ContinuityEquation}.
To prove
Theorem \ref{theo:NavierStokes}
we will first prove its 
`forward velocity' counterpart
in Corollary 
\ref{cor:ForwardNavierStokes}.
Following~\cite{ACLZ}, the idea
is to compare the density obtained
from Theorem \ref{theo:GirsanovDensity}
and the density
from the definition of a regular solution 
\eqref{eq:RegularSolutionForm}.
This is done in the following lemma.
We recall that~$\zeta^x$ is the one obtained
in Theorem \ref{theo:Girsanov}.
%

\begin{lemme}[Comparison of densities]
\label{lem:ZetaPsiRelation}
Suppose
that $P$ is a regular
solution of \eqref{BS}. 
Then, for $P_0$-almost every $x\in M$ and
for every $t\in [0,1]$,
\[
\langle \zeta^x_t,
 \mathrm d_m^{R^x}X_t
\rangle -
\frac{1}{2}\left\|\zeta_t^x
\right\|^2 \mathrm d t = \ind_\SS(t)\theta_t + 
p_t \mathrm dt + \mathrm d\psi^{x}_t(X_t),\, P^x\text{-almost surely}.
\]
\end{lemme}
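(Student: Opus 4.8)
The strategy is to write down two different expressions for the density $\mathrm dP^x/\mathrm dR^x$ and equate them. On one side, Theorem \ref{theo:GirsanovDensity} gives
\[
\frac{\mathrm dP^x}{\mathrm dR^x} = \exp\left(\int_0^1 \langle \zeta^x_t, \mathrm d^{P^x}_m X_t\rangle - \tfrac12\int_0^1 \|\zeta^x_s\|^2\,\mathrm ds\right)
\]
on the event where the density is positive — which has full $P^x$-measure. Rewriting the $P^x$-martingale differential via the remark after Theorem \ref{theo:Girsanov}, namely $\mathrm d^{P^x}_m X_t = \mathrm d^{R^x}_m X_t - \mathrm dA_t(\zeta^x\otimes\cdot)$, the exponent becomes $\int_0^1 \langle\zeta^x_t, \mathrm d^{R^x}_m X_t\rangle - \tfrac12\int_0^1\|\zeta^x_s\|^2\,\mathrm ds$ (using that $A_1(\zeta^x\otimes\zeta^x) = \int_0^1\|\zeta^x_s\|^2\,\mathrm ds$). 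On the other side, from the defining form \eqref{eq:RegularSolutionForm} of a regular solution, conditioning $R$ on $X_0 = x$ gives, for $R^x$-a.e.\ (hence $P^x$-a.e.) path,
\[
\frac{\mathrm dP^x}{\mathrm dR^x}(X) = \frac{1}{c(x)}\exp\left(\eta(x,X_1) + \sum_{s\in\SS}\theta_s(X_s) + \int_\TT p_r(X_r)\,\mathrm dr\right),
\]
where $c(x)$ is the normalising constant making this a probability density; $\log c(x)$ is exactly $\psi^x_0(x)$ in the notation of \eqref{eq:psi} (with the convention that the leftmost value is taken at $t=0$). Taking logarithms of both expressions and differentiating in $t$ is what will produce the claimed identity.

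The key observation that converts the endpoint data into the process $\psi^x$ is that $\psi^x_t(z)$ defined by \eqref{eq:psi} is, up to the normalisation, the logarithm of the conditional expectation under $R^x$ of the ``remaining'' exponential weight given $X_t = z$; consequently $M_t := \exp(\psi^x_t(X_t))$ — or rather the suitably normalised object — is, off the shock times, the $R^x$-martingale obtained by conditioning the total density on $\sigma((X_s)_{s\le t})$. More precisely, if one sets $Z_t := \E_{R^x}[\,\tfrac{\mathrm dP^x}{\mathrm dR^x}\mid X_{[0,t]}]$, then by the Markov property $Z_t = \exp(\int_{\TT\cap[0,t]}p_r(X_r)\,\mathrm dr + \sum_{s\in\SS\cap[0,t]}\theta_s(X_s) + \psi^x_t(X_t) - \psi^x_0(x))$, and $Z_t$ is a true $R^x$-martingale with terminal value $\mathrm dP^x/\mathrm dR^x$. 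On the other hand, the Girsanov side tells us $Z_t = \EE(N^{\zeta^x})_t = \exp(\int_0^t\langle\zeta^x_s,\mathrm d^{R^x}_m X_s\rangle - \tfrac12\int_0^t\|\zeta^x_s\|^2\,\mathrm ds)$ on $[0,1]$ (after checking $\EE(N^{\zeta^x})$ is a genuine martingale, which follows from $H(P^x|R^x)<\infty$ together with the density formula of Theorem \ref{theo:GirsanovDensity}). Equating the two martingales and taking $\log$ gives, for all $t$,
\[
\int_0^t\langle\zeta^x_s,\mathrm d^{R^x}_m X_s\rangle - \tfrac12\int_0^t\|\zeta^x_s\|^2\,\mathrm ds = \int_{\TT\cap[0,t]}p_r(X_r)\,\mathrm dr + \sum_{s\in\SS\cap[0,t]}\theta_s(X_s) + \psi^x_t(X_t) - \psi^x_0(x),
\]
and differentiating in $t$ yields the stated identity (the $-\psi^x_0(x)$ term and any normalising constant are absorbed since they are $t$-independent, and $\mathrm d\psi^x_t(X_t)$ on the right-hand side is understood as the Itô differential of the semimartingale $t\mapsto\psi^x_t(X_t)$, picking up both the continuous part on $\TT$ and the jumps $\theta_t$ on $\SS$ — here one must be careful that the shock-time jump of $\psi^x$ is $-\theta_t$, so that it cancels against $\ind_\SS(t)\theta_t$ correctly, consistent with the càdlàg conventions set up before the theorems).

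\textbf{Main obstacle.} The delicate point is the martingale property and the justification of taking logarithms pathwise: one must argue that $\EE(N^{\zeta^x})$ is a true martingale (not merely a supermartingale) on all of $[0,1]$, which is where the finite-entropy hypothesis is genuinely used — via Theorem \ref{theo:GirsanovDensity}, the supermartingale $\EE(N^{\zeta^x})$ has the same terminal expectation $1$ as a martingale, forcing equality. A second subtlety is bookkeeping at the shock times: the process $\psi^x_t(X_t)$ is only càdlàg, and one needs the decomposition of its differential into the absolutely continuous part (contributing $-p_t\,\mathrm dt$ via the backward Kolmogorov equation for $\psi^x$ — though here we only need the raw identity, not the PDE), the martingale part, and the jump part $\sum_s \theta_s$; matching these three pieces with the left-hand side is what pins down the formula. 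Everything else — the Markov property computation for $Z_t$, the identification $A_1(\zeta^x\otimes\zeta^x) = \int_0^1\|\zeta^x_s\|^2\,\mathrm ds$, and conditioning $R$ to get $R^x$ — is routine.
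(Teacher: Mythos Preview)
Your approach is essentially the same as the paper's: both proofs compute $\mathrm dP^x_{[0,t]}/\mathrm dR^x_{[0,t]}$ two ways---once from Theorem~\ref{theo:GirsanovDensity} (restricted to $\FF_t$, giving the stochastic exponential of $N^{\zeta^x}$) and once from the regular-solution form~\eqref{eq:RegularSolutionForm} via the Markov property (producing $\psi^x_t(X_t)$)---then equate and pass to differentials. You are in fact more explicit than the paper about the normalising constant $c(x)=e^{\psi^x_0(x)}$ and about why $\EE(N^{\zeta^x})$ is a true martingale; just recheck the coefficient when converting the exponent from $\mathrm d_m^{P^x}X$ to $\mathrm d_m^{R^x}X$, since substituting $\mathrm d_m^{P^x}X=\mathrm d_m^{R^x}X-\zeta^x\,\mathrm dt$ into $\int\langle\zeta^x,\mathrm d_m^{P^x}X\rangle-\tfrac12\int\|\zeta^x\|^2$ does not literally give $-\tfrac12$ (the correct identity $\log Z_t=\int_0^t\langle\zeta^x,\mathrm d_m^{R^x}X\rangle-\tfrac12\int_0^t\|\zeta^x\|^2$ is the stochastic-exponential formula itself, and your final integrated equation is right).
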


\begin{proof}
The idea of the proof is to compare two expressions of the density of $P^x$ with respect to $R^x$, 
the first given by 
Theorem \ref{theo:GirsanovDensity} and the second by the definition of a regular solution. 
On the one hand,
from Theorem \ref{theo:GirsanovDensity}, 
the density is
\begin{equation*}
\frac{\mathrm dP^x}{\mathrm dR^x} = 
\exp\left(\int_{[0,1]}\langle\zeta^x_t,
\mathrm d_m^{P^x}X_t
\rangle -\frac{1}{2}\int_{[0,1]}\left\|\zeta^x_t\right\|^2\, \mathrm dt\right),\, P^x\text{-a.s.}
\end{equation*}
Then we restrict the density to 
$\FF_t = 
\sigma((X_u)_{u \in [0,t]})$ by using that
\begin{equation*}
\frac{\mathrm dP^x_{[0,t]}}{\mathrm dR^x_{[0,t]}} = \E_{R^x}\left[\left.
\frac{\mathrm dP^x}{\mathrm dR^x}\right|\FF_t\right].
\end{equation*}
For all 
$t \in [0,1]$
 we have 
\begin{equation*}
\frac{\mathrm dP^x_{[0,t]}}{\mathrm dR^x_{[0,t]}}
 = \exp\left(\int_0^t\langle\zeta^x_s,
  \mathrm d_m^{P^x}X_s\rangle -\frac{1}{2}\int_0^t\left\|\zeta^x_s\right\|^2\,\mathrm ds\right),\, P^x\text{-a.s.}
\end{equation*} 
On the other hand, from the definition of 
a regular solution, we know that $P$ has the form~\eqref{eq:RegularSolutionForm}. By disintegration, for $R_0$-almost every $x\in M$, 
\begin{equation*}
\frac{\mathrm dP^x}{dR^x} = 
\exp\left(\eta(x,X_1)+ \sum_{s\in\SS}
\theta_s(X_s)+ \int_\TT p_t(X_t)\, \mathrm dt\right),
\, R^x\text{-a.s.}
\end{equation*}
Then, conditioning with respect to $\FF_t$ and using the Markov property of $R^x$, 
\begin{align*}
\frac{\mathrm dP^x_{[0,t]}}{\mathrm dR^x_{[0,t]}} 
&= \E_{R^x}\left[\left.\exp\left(\eta(x,X_1)+ \sum_{s\in\SS}\theta_s(X_s)+ \int_\TT p_r(X_r)\, 
\mathrm dr\right)\right|\FF_t\right]\\
&= \exp\left(\sum_{s\leq t}\theta_s(X_s)+ \int_{\TT\cap[0,t]} p_r(X_r)\, \mathrm dr\right)\\
&\qquad\times\E_{R^x}\left[\left.\exp\left(\eta(x,X_1)+ \sum_{s>t}\theta_s(X_s)+ \int_{\TT\cap]t,1]} p_r(X_r)\, \mathrm dr\right)\right|\FF_t\right]\\
&=\exp\left(\sum_{s\in\SS, s\leq t}\theta_s(X_s)+ \int_{\TT\cap[0,t]} p_r(X_r)\, 
\mathrm dr + \psi^x_t(X_t)\right),\, R^x\text{-a.s.}\\
\end{align*}
We confront both expressions for
$\frac{\mathrm dP^x_{[0,t]}}{\mathrm dR^x_{[0,t]}} $
and conclude.

\end{proof}

\begin{theo}[Hamiltonian equation for the potential]
\label{theo:Hamilton}
Assume that $P$ is 
a forward regular solution
of \eqref{BS}.
Then, for $P_0$-almost 
every $x\in M$, the function $\psi^x$ defined by 
\eqref{eq:psi}
 is a classical solution of 
\begin{equation}\label{eq:HamiltonJacobiForward}
\left\{\begin{aligned}
&\partial_t\psi^x_t -\frac{1}{2}\Delta\psi^x_t +
\|\nabla\psi^x_t\|^2+\ind_\TT(t)p_t=0, && t \in [0,1) \setminus \SS,\\
&\psi^x_t-\psi^x_{t^-} = -\theta_t, && t\in \SS,\\
&\langle\nabla\psi^x_t(z),\nu_z\rangle = 0, && z\in\partial M,\\
&\psi^x_1=\eta(x,\cdot), && t=1,\\
\end{aligned}\right.
\end{equation}
which is a second-order Hamiltonian equation.

Moreover, 
for
$P_0$-almost every
$x \in M$,
\[
\zeta_t^x
= \nabla\psi^{x}_t(X_t),\, 
\mathrm{d t} \otimes P^x\text{-almost surely.}
\]
\end{theo}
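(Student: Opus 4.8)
The plan is to obtain the canonical semimartingale decomposition of the real càdlàg process $t\mapsto\psi^x_t(X_t)$ under $R^x$ in two different ways — once from It\^o's formula, once from Lemma~\ref{lem:ZetaPsiRelation} — and to extract the four assertions (the identity $\zeta^x=\nabla\psi^x(X)$, the boundary condition, the bulk equation, the terminal condition) by matching the three pieces. By the entropy chain rule $H(P^x|R^x)<\infty$ for $P_0$-almost every $x$, and for such an $x$ Theorem~\ref{theo:Girsanov} provides the adapted field $\zeta^x$ together with $\mathrm d_m^{R^x}X_t=\mathrm d_m^{P^x}X_t+\zeta^x_t\,\mathrm dt$. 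Since on the compact manifold the density $\mathrm dP^x/\mathrm dR^x=\exp\!\big(\eta(x,X_1)+\sum_{s\in\SS}\theta_s(X_s)+\int_\TT p_r(X_r)\,\mathrm dr\big)$ is bounded above and below by positive constants, $P^x\sim R^x$, so Lemma~\ref{lem:ZetaPsiRelation} holds $R^x$-almost surely as well.

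First, apply It\^o's formula to $\psi^x_t(X_t)$ under $R^x$. On each connected component of $[0,1]\setminus\SS$ the function $\psi^x$ is $C^2$ in $z$ and $C^1$ in $t$, and under $R^x$ one has $\mathrm dX_t=\mathrm d_m^{R^x}X_t+\nu_{X_t}\,\mathrm dL_t$ with $\mathrm d[X,X]_t$ equal to the Riemannian metric, so
\[
\mathrm d\psi^x_t(X_t)=\big(\partial_t\psi^x_t+\tfrac12\Delta\psi^x_t\big)(X_t)\,\mathrm dt+\langle\nabla\psi^x_t(X_t),\mathrm d_m^{R^x}X_t\rangle+\langle\nabla\psi^x_t(X_t),\nu_{X_t}\rangle\,\mathrm dL_t,
\]
whereas at a shock time $s\in\SS$ the jump equals $\psi^x_s(X_s)-\psi^x_{s^-}(X_s)=-\theta_s(X_s)$, which follows directly from~\eqref{eq:psi} (conditionally on $X_s=z$ the time-$s$ integrand differs from its left limit by the factor $e^{\theta_s(z)}$). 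On the other hand, Lemma~\ref{lem:ZetaPsiRelation} reads $\mathrm d\psi^x_t(X_t)=\langle\zeta^x_t,\mathrm d_m^{R^x}X_t\rangle-\tfrac12\|\zeta^x_t\|^2\,\mathrm dt-\ind_\SS(t)\theta_t-\ind_\TT(t)p_t\,\mathrm dt$.

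Matching the two decompositions, using uniqueness of the canonical decomposition of a semimartingale, finishes the proof. The continuous local-martingale parts give $\langle\nabla\psi^x_t(X_t)-\zeta^x_t,\mathrm d_m^{R^x}X_t\rangle\equiv0$; since $\mathrm d_m^{R^x}X$ has quadratic variation the (non-degenerate) metric, passing to quadratic variation forces $\zeta^x_t=\nabla\psi^x_t(X_t)$ for $\mathrm dt\otimes R^x$- (hence $\mathrm dt\otimes P^x$-) almost every $(t,X)$. The local-time parts give $\int_0^\cdot\langle\nabla\psi^x_t(X_t),\nu_{X_t}\rangle\,\mathrm dL_t\equiv0$; because reflected Brownian motion accumulates positive local time on every nonempty relatively open subset of $\partial M$ and $\nabla\psi^x$ is continuous, this forces $\langle\nabla\psi^x_t(z),\nu_z\rangle=0$ for all $z\in\partial M$. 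The bounded-variation parts give, at $z=X_t$, the scalar identity $\partial_t\psi^x_t+\tfrac12\Delta\psi^x_t+\tfrac12\|\zeta^x_t\|^2+\ind_\TT(t)p_t=0$, which upon substituting $\zeta^x=\nabla\psi^x(X)$ is the second-order Hamilton--Jacobi equation of~\eqref{eq:HamiltonJacobiForward} evaluated along the trajectory; since for $t\in(0,1)$ the law $R^x_t$ has a strictly positive smooth density on $M$ and $\psi^x$ is continuous in $(t,z)$ off $\SS$, the equation holds at every $z\in M$ for every $t\in(0,1)\setminus\SS$. The terminal condition $\psi^x_1=\eta(x,\cdot)$ is immediate from~\eqref{eq:psi}. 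Together with the jump relation already noted, this is exactly~\eqref{eq:HamiltonJacobiForward} and the displayed identity for $\zeta^x$.

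The difficulties are technical rather than conceptual. One must justify It\^o's formula under the minimal regularity built into the definition of a regular solution and, in particular, treat the finitely many shock times so that $t\mapsto\psi^x_t(X_t)$ carries no continuous contribution there beyond the prescribed jump. One must also make rigorous the passage from identities valid $\mathrm dt$-almost everywhere along $R^x$-trajectories to pointwise identities on $M$, which relies on strict positivity of the reflected heat kernel on $(0,1)$ together with spatial continuity of $\psi^x$; alternatively this step can be replaced by identifying $\Phi^x_t=e^{\psi^x_t}$ with a Feynman--Kac solution of the associated linear parabolic problem (Neumann condition, pointwise potential $\ind_\TT p$, jump data from the $\theta_s$), for which classical regularity is standard. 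Finally, everything above holds only outside a $P_0$-null set of initial points $x$, which has to be carried through all the arguments.
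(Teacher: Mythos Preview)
Your proof is correct and follows essentially the same route as the paper: compute the semimartingale decomposition of $\psi^x_t(X_t)$ once by It\^o's formula and once via Lemma~\ref{lem:ZetaPsiRelation}, then match martingale, bounded-variation, local-time and jump parts. The only cosmetic difference is that you carry out the comparison under $R^x$ (using $P^x\sim R^x$ from boundedness of the density), whereas the paper converts Lemma~\ref{lem:ZetaPsiRelation} to the $P^x$-martingale and matches under $P^x$; this leads to the same identifications.
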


\begin{proof}
According to Theorem \ref{theo:Girsanov}, we have 
$\mathrm d_m^{P^x} X_t = \mathrm d_m^{R^x}X_t - 
\zeta_t \mathrm dt$, $P^x$-almost surely. 
By Lemma \ref{lem:ZetaPsiRelation},
for $P_0$-almost every 
  $x\in M$ and for all $t \in [0,1]$,
\begin{equation*}
\mathrm d\psi^x_t(X_t) = 
\langle \zeta^x_t, \mathrm d_m^{P^x}X_t\rangle 
+\left(\frac{1}{2}
\|\zeta^x_t\|^2 -p_t(X_t)\right)
\mathrm dt - \ind_\SS(t)\theta_t(X_t),\, 
P^x\text{-almost surely.}
\end{equation*}
On the other hand, since
$\psi^x$ is regular enough, 
the semi-martingale $(\psi^x(X_t))_{t\in [0,1]}$ satisfies the Itô formula. For all $t \in [0,1]$, 
we have
\begin{align*}
\mathrm d\psi^x_t(X_t) = 
&[\psi^x_t-\psi^x_{t^-}](X_t) + \left\langle\nabla\psi^x_t(X_t), \mathrm d_m^{P^x}X_t\right\rangle +
\left\langle\nabla\psi^x_t(X_t), \zeta^x_t\right\rangle \mathrm dt\\  
&+\left\langle\nabla\psi^x_t(X_t),\nu_{X_t}\right\rangle \mathrm dL_t + 
\left(\frac{1}{2}\Delta +\partial_t\right)
\psi^x_t(X_t) \mathrm dt ,\, P^x\text{-a.s.}
\end{align*}
The Doob-Meyer decomposition of 
a semi-martingale allows the following
identifications using the  previous equations.
\begin{equation*}
\left\{\begin{aligned}
&\zeta^x_t = \nabla\psi^x_t (X_t),\, && 
\mathrm dt \otimes \mathrm dP^x(X)\text{-a.s.}\\
& - \ind_\SS(t)\theta_t(X_t) = [\psi^x_t-\psi^x_{t^-}](X_t),\, 
&& \mathrm dP^x(X)\text{-a.s.}\\
& \frac{1}{2}\|\zeta^x_t\|^2 -p_t(X_t) = \langle\nabla\psi^x_t(X_t), \zeta^x_t\rangle + \left(\frac{1}{2}\Delta +\partial_t\right)\psi^x_t(X_t),\, 
&& 
\mathrm dt \otimes \mathrm d P^x\text{-a.s.}\\
& \langle \nabla \psi^x_t(X_t),\nu_{X_t}\rangle\ind_{\partial M}(X_t) = 0,\, && 
\mathrm dL_t(X) \otimes
\mathrm dP^x(X)\text{-a.s.}\\
\end{aligned}\right.
\end{equation*}
We complete the proof
by using the covering property of $X$ under $P^x$ so that
\begin{equation*}
\left\{\begin{aligned}
&\zeta_t^x = \nabla\psi^x_t (X_t),\, && \mathrm dt 
\otimes \mathrm dP^x(X)\text{-a.s.},\\
& -\theta_t(z) = [\psi^x_t-\psi^x_{t^-}](z),\, && t\in\SS,  z\in M,\\
&\left(\frac{1}{2}\Delta +\partial_t\right)\psi^x_t(z) +\frac{1}{2}\|\nabla\psi^x_t(z)\|^2 +\ind_\TT(t)p_t(z)=0,\, && t\in[0,1)\setminus\SS, z\in M,\\
& \langle \nabla \psi^x(z),\nu_{z}\rangle = 0,\, &&  z\in\partial M.\\
\end{aligned}\right. 
\end{equation*}
\end{proof}

The function $\psi^x$ plays the role of a scalar potential of $\zeta^x$. 
The previous theorem tells us that,  in fact,  the randomness for $\zeta^x$ can be thought of
as coming  only
from the position $X_t$.
Recall that $\nabla$ denotes
the covariant
derivative and 
$\square$
denotes the 
de Rham-Hodge-Laplace operator
$-(\mathrm d\delta + \delta \mathrm d)$
by identifying
vector fields with
one-forms.  
Using the notation of
Section \ref{sec:ResultsKinematic},
define the random times
\[
\tau_t =\frac{1}{2}\sup
\left\{h\geq0 : 
X_{t+s}\in U(X_t)\mbox{ 
for every } s \in [0,h] \right\}
\]
which are strictly positive.

\begin{corol}[Forward stochastic velocity]
\label{cor:ForwardNavierStokes}

Suppose that $P$
is a forward regular solution
of \eqref{BS}.
 Then, 
for $P_0$-almost every
$x \in M$, 
there exists
a measurable function
\[\vf{x}_{\hspace{-1mm}}
\hspace{1mm}:[0,1]\times \Omega
\to TM\]
such that
$t \mapsto
\vf{x}_{\hspace{-1mm}t}
\hspace{-1mm}
(\omega)$ is 
right-continuous and has left
limits for every
$\omega \in \Omega$ and
such that
for every $t \in [0,1]$
we have
that, for
$P^x$-almost every $X$,
\begin{equation}\label{eq:ForwardStocVelocity}
\lim_{h\to 0^+}\frac{1}{h}\E_{P^x}
\left[\left. 
\overrightarrow{
X_{t}X_{t+h 
\wedge \tau_t}}
\right|X_{[0,t]}\right]
=\vf{x}_{\hspace{-1mm}t}
\hspace{-1mm}(X).
\end{equation}
Let $V_t^x = \nabla 
\psi_t^x$.
Then, for $P_0$-almost every
$x \in M$,
 \[
 \vf{x}_{\hspace{-1mm}t}
\hspace{1mm}=
V_t^x(X_t),
 \quad 
 P^x\text{-almost surely}.\]
Moreover,
for $P_0$ almost all $x\in M$,
the time-dependent
vector field
$V^x$ satisfies 
\[
\left\{\begin{aligned}
&\left(\partial_t  +\nabla_{V^x}\right) V^x =
-\frac{1}{2}\square V^x-\ind_\TT(t)\nabla p, && 
t \in [0,1)\setminus 
\SS, \\
&V^x_{t}- V^x_{t^-} = 
-\nabla \theta_t, && t\in S,\\
&\langle V^x(z),\nu_z\rangle = 0, && z\in\partial M,\\
&V^x_1=\nabla
\big(\eta(\cdot,x)\big), && 
t=1.\\
\end{aligned}\right.
\]

\end{corol}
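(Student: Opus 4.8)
The plan is to establish Corollary~\ref{cor:ForwardNavierStokes} in two movements: first the probabilistic identification of the forward stochastic velocity $\vf{x}$ with the drift $\zeta^x = \nabla\psi^x_t(X_t)$ from Theorem~\ref{theo:Hamilton}, and then the PDE computation deriving the Navier-Stokes-type system for $V^x = \nabla\psi^x$ from the scalar Hamilton--Jacobi equation~\eqref{eq:HamiltonJacobiForward}. For the first part, I would start from the semi-martingale decomposition supplied by Theorem~\ref{theo:Girsanov}: $P^x$-almost surely, $\mathrm dX_t = \mathrm d^{P^x}_m X_t + \zeta^x_t\,\mathrm dt + \nu_{X_t}\,\mathrm dL_t$. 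One then must relate the Itô-type increment $\mathrm dX_t$ to the geometric increment $\overrightarrow{X_t X_{t+h}} = \log_{X_t}(X_{t+h})$ appearing in~\eqref{eq:ForwardStocVelocity}. The key fact is that, on the (strictly positive, by the $U(X_t)$-containment property) random time scale $\tau_t$, a second-order Taylor expansion of $\log_{X_t}$ along the semi-martingale gives $\mathbb E_{P^x}[\overrightarrow{X_t X_{t+h\wedge\tau_t}} \mid X_{[0,t]}] = h\,\zeta^x_t + o(h)$ — the martingale part and the local-time part contribute nothing to first order in conditional expectation (the local time does not accumulate in the interior where $X_t \notin \partial M$ almost surely, and the quadratic-variation correction from the exponential chart cancels at order $h$ by the standard Nelson-type argument, since $\Gamma$-symbols vanish at the base point in normal coordinates). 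This yields $\vf{x}_t = \zeta^x_t = V^x_t(X_t) = \nabla\psi^x_t(X_t)$, $P^x$-a.s.; measurability and the càdlàg path regularity in $t$ follow from the corresponding regularity of $\psi^x$ assumed in the definition of a forward regular solution and the càdlàg assumption there.

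For the second movement, I would take the gradient (covariant derivative) of the scalar Hamilton--Jacobi equation in~\eqref{eq:HamiltonJacobiForward}. Applying $\nabla$ to $\partial_t\psi^x_t - \tfrac12\Delta\psi^x_t + \|\nabla\psi^x_t\|^2 + \ind_\TT(t)p_t = 0$ and using $V^x = \nabla\psi^x$: the term $\nabla\partial_t\psi = \partial_t V^x$; the term $\nabla(\|\nabla\psi\|^2) = \nabla(\langle V^x, V^x\rangle)$ which by the standard identity equals $2\nabla_{V^x}V^x$ (here one uses that $V^x$ is a gradient field, so its covariant derivative is symmetric, i.e. $\langle\nabla_u V^x, w\rangle = \langle\nabla_w V^x, u\rangle = \Hess\psi(u,w)$); the term $\nabla(\tfrac12\Delta\psi^x)$ must be converted via the Bochner--Weitzenböck formula, $\nabla\Delta\psi = \Delta_{\mathrm{Hodge}}\nabla\psi + \Ric(\nabla\psi)$, equivalently $\nabla\Delta\psi = \square(\nabla\psi)$ when we use $\square = -(\mathrm d\delta+\delta\mathrm d)$ on the exact one-form $\mathrm d\psi$ — indeed $\square\,\mathrm d\psi = -\mathrm d\delta\,\mathrm d\psi = \mathrm d(\Delta\psi)$ since $\delta\mathrm d\psi = -\Delta\psi$ and $\delta\circ\delta=0$. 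This gives exactly $\partial_t V^x + \nabla_{V^x}V^x = \tfrac12\square V^x \cdot(-1)\cdot(-1)$ — I need to track the sign carefully: $\nabla(-\tfrac12\Delta\psi)$ produces $-\tfrac12\square V^x$, matching the stated equation. The jump relation is obtained by taking $\nabla$ of $\psi^x_t - \psi^x_{t^-} = -\theta_t$, giving $V^x_t - V^x_{t^-} = -\nabla\theta_t$ on $\SS$; the terminal condition from $\psi^x_1 = \eta(x,\cdot)$; and the boundary condition $\langle V^x(z),\nu_z\rangle = 0$ is literally $\langle\nabla\psi^x(z),\nu_z\rangle = 0$, the third line of~\eqref{eq:HamiltonJacobiForward}.

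I expect the main obstacle to be the first movement: rigorously justifying that the conditional expectation of the geometric (exponential-chart) increment $\overrightarrow{X_t X_{t+h\wedge\tau_t}}$ extracts precisely the Girsanov drift to first order, with no spurious contribution. The subtlety is threefold — controlling the stochastic stopping at $\tau_t$ (which is why the containment-in-$U(X_t)$ hypothesis and strict positivity of $\tau_t$ are built into the setup), verifying the second-order correction from the curvature of the exponential map vanishes in conditional expectation at order $h$ (this is the manifold analogue of Nelson's forward derivative being the drift for a diffusion, and relies on the martingale part having conditional expectation zero plus the normal-coordinate vanishing of Christoffel symbols at the base), and confirming the reflecting local time $L$ contributes nothing since $X_t \in \mathring M$ for $P^x$-a.e.\ path at each fixed $t$ (as $P^x \ll R^x$ and this holds $R^x$-a.s., a point already noted in Section~\ref{sec:ResultsKinematic}). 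Once this identification is in hand, the PDE part is a routine differentiation of the scalar equation already proved in Theorem~\ref{theo:Hamilton}, modulo careful bookkeeping of the Bochner identity and signs. The final statement then follows by time-reversal as indicated, since $\vb{y}$ corresponds to applying the forward result to the time-reversed measure, which is why Theorem~\ref{theo:NavierStokes} carries the opposite sign $+\tfrac12\square$ in front of the Laplacian term.
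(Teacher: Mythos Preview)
Your two-movement strategy—Itô's formula applied to $\log_{X_t}$ to identify the forward stochastic velocity with the Girsanov drift $\zeta^x=\nabla\psi^x_t(X_t)$, then taking the gradient of the scalar Hamilton--Jacobi equation~\eqref{eq:HamiltonJacobiForward}—is precisely the paper's own argument. One minor remark: the Bochner--Weitzenböck formula you invoke is slightly misstated and in any case unnecessary; all that is needed, as you yourself write in the next clause and as the paper uses, is the commutation of $\mathrm d$ with the Hodge Laplacian $-(\mathrm d\delta+\delta\mathrm d)$, giving $\mathrm d(\Delta\psi)=\square(\mathrm d\psi)$ directly.
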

 
\begin{proof}
Theorem \ref{theo:Girsanov}
tells us that,
$P^x$-almost surely,
$\mathrm dX_t = 
\mathrm d_m^{P^x}X_t + \zeta_t^x
\mathrm dt + \nu_{X_t}
\mathrm dL_t$. 
Moreover, we know by
Theorem \ref{theo:Hamilton} that we
may choose a version of $\zeta^x$ that
is càdlàg in $t$.
Applying Itô's formula to the 
logarithm 
$\log$, 
and taking the limit
by using the right-continuity 
of $\zeta^x$
we show that
the limit defining
$\vf{x}_{\hspace{-1mm}t}\hspace{1mm}$
exists and that
$\vf{x}\hspace{-1mm}(X)= \zeta^x= \nabla\psi^{x}_t(X_t)$. 
The rest is a consequence of Theorem 
\ref{theo:Hamilton} by taking the 
gradient of
\eqref{eq:HamiltonJacobiForward}
and using that
$(\mathrm d\delta + \delta \mathrm d)$
commutes with the exterior derivative $\mathrm d$.
\end{proof}

%

Now we are ready to give the proof of Theorem 
\ref{theo:NavierStokes}.

\begin{proof}[Proof of Theorem  \ref{theo:NavierStokes}]
There is a strong link between
$ \vf{x}_{\hspace{-1mm}t}$, 
the forward stochastic velocity obtained
in Corollary \ref{cor:ForwardNavierStokes}, and 
the backward stochastic velocity 
to be obtained here. This is
achieved through the time reversal transformation. 
Let 
$\mathrm{rev}:\Omega \to \Omega$
be the time reversal
transformation
defined by
$\mathrm{rev}(\omega)_t = 
\omega_{1-t}$. 
Let $P^*=\mathrm{rev}_*P$
be the pushforward of $P$
by the map $\mathrm{rev}$.
Then, 
the limit defining 
the forward stochastic velocity
is related to the sought limit for
the backward stochastic velocity by
\begin{equation}
\label{eq:ForwardVsBackward}
\vb{y}_{\hspace{-1mm}t}
\hspace{1mm} = -\vf{y}^{P^*}_{1-t}\circ\, 
\mathrm{rev},
\end{equation}
where $\vf{y}^{P^*}$
denotes the forward stochastic
velocity associated to $P^*$.
We just need to notice that $P^*$
satisfy the requirements
of Corollary \ref{cor:ForwardNavierStokes}.
Since the reference measure $R$ is reversible, we have 
$\mathrm{rev}_* R=R$ so that
\eqref{eq:RegularSolutionForm} becomes
\begin{equation*}
\mathrm d P^*(X) = 
\exp\left(\eta^*(X_0,X_1) + \sum_{s\in\SS*}\theta^*_s(X_s)+ \int_{\TT^*} p^*_t(X_t)\, 
\mathrm dt\right) \mathrm d R(X), 
\end{equation*}
where $\eta^*(x,y) = \eta(y,x)$, $\SS^*=\{1-s :s\in\SS\}$, $\theta^*_s = \theta_{1-s}$, 
$\TT^* = \{1-t : t\in\TT\}$ and~$p^*_t = p_{1-t}$. 
The function $\psi$ from \eqref{eq:psi} for $P^*$ equals the function
$\varphi$ from 
\eqref{eq:phi} for $P$ or, more precisely,
\[
\varphi^y_t(z) = \log\E_{R^y}\left[\left.\exp\left(\eta^*(y,X_1) + \sum_{s\in\SS^*, s>t}\theta^*_s(X_s) +\int_{\TT^*\cap]t,1]}p^*_r(X_r)\, 
\mathrm dr\right)\right| X_t=z\right].
\]
We may conclude
by Corollary \ref{cor:ForwardNavierStokes}
and \eqref{eq:ForwardVsBackward}.

\end{proof}

Now we give the proof of Theorem 
\ref{theo:ContinuityEquation}.
Notice that we will not use the regularity
of the solution.
In particular, there are no
$\eta$, $p$ and $\theta$ involved.

\begin{proof}[Proof of Theorem \ref{theo:ContinuityEquation}]

Let $f\in\CC^\infty(M)$, and $0\leq t\leq1$. On one hand, we have that
\begin{align*}
P_t(f)
&= \E_P[f(X_t)]\\
&= \int_M\E_{P^x}[f(X_t)]\, \mathrm d P_0(x) \\
&=\int_M \E_{P^x}\left[f(x) + \int_0^t\langle 
\mathrm df, \vf{x}_s\rangle\, \mathrm ds  + \frac{1}{2}\int_0^t\Delta f(X_s)\, \mathrm ds +\int_0^t\langle \mathrm df, \nu_{X_s}\rangle\, \mathrm dL_s\right]\, \mathrm d P_0(x) \\
&=P_0(f) + \int_0^t P_s\left(\langle 
\mathrm df, \stackrel{\hookrightarrow}{v}_s\rangle +\frac{1}{2}\Delta f\right)\, \mathrm ds 
+\E_P\left[\int_0^t\langle \mathrm df, \nu_{X_s}\rangle\,
\mathrm dL_s\right].
\end{align*}
On the other hand, for $P_1$-almost every $y\in M$, under the reversed law ${P^y}^*$, $X$ is a semi-martingale with drift~${\vf{y}^{P^*} 
\hspace{-3mm}\mathrm dt +\nu \mathrm dL^*}$ where the relation between~$\vb{y}$ and $\vf{y}^{P^*}
\hspace{-2mm}$ is given by~\eqref{eq:ForwardVsBackward} and $L_t(X) = L^*_{1-t}(X^*)$ for all $t \in [0,1]$, where
$X^* = \mathrm{rev}(X)$. We have 
\begin{align*}
&\E_{{P^y}^*}[f(X_{1-t})]\\
&\quad= \E_{{P^y}^*}\left[f(y) + \int_0^{1-t}\langle \mathrm df, \vf{y}_s^{P^*}
\hspace{-3mm}(X)\rangle\, \mathrm ds  + \frac{1}{2}\int_0^{1-t}\Delta f(X_s)\, \mathrm ds +\int_0^{1-t}\langle \mathrm df, \nu_{X_s}\rangle\, \mathrm dL^*_s(X)\right]\\
&\quad= \E_{{P^y}^*}\left[f(y) + \int_t^1\langle \mathrm df, \vf{y}_{1-s}^{P^*}\hspace{-1mm}(X)\rangle\, \mathrm ds  + \frac{1}{2}\int_t^1\Delta f(X^*_s)\, \mathrm ds +\int_t^1\langle \mathrm df, \nu_{X_s}\rangle\, \mathrm dL^*_{1-s}(X)\right]\\
&\quad= \E_{{P^y}^*}\left[f(y) - \int_t^1\langle \mathrm df, \vb{y}_s \hspace{-1mm}
(X^*)\rangle\, \mathrm ds  
+ \frac{1}{2}\int_t^1\Delta f(X^*_s)\, \mathrm ds +\int_t^1\langle \mathrm df, \nu_{X_s}\rangle\, \mathrm dL_{s}(X^*)\right].
\end{align*}
Hence, by disintegration along $P_1$, we have that
\begin{align*}
P_t(f)
&= \E_{P^*}[f(X_{1-t})]\\
&=\int_M \E_{{P^y}^*}[f(X_{1-t})]\, \mathrm d P_1(y) \\
&= P_1(f) + \int_t^1 P_s\left(\langle \mathrm df, -\stackrel{\hookleftarrow}{v}_s(X)\rangle +\frac{1}{2}\Delta f\right)\, \mathrm ds +
\E_P\left[\int_t^1\langle \mathrm df, \nu_{X_s}\rangle\, \mathrm dL_s\right].
\end{align*}
Before differentiating, we need to show that the terms with local time are regular enough. For $\varepsilon>0$, we denote $\partial^\varepsilon M$ the $\varepsilon$-tubular neighbourhood of $\partial M$. We have 
\begin{equation*}
\int_0^t\langle \mathrm df, \nu_{X_s}\rangle\, 
\mathrm dL_s = \lim_{\varepsilon\to 0}\frac{1}{2\varepsilon}\int_0^t\langle \mathrm df, 
\nu_{X_s}\rangle\ind_{X_s\in\partial^\varepsilon M}\, \mathrm ds.
\end{equation*}
Then, we obtain
\begin{align*}
\E_P\left[\int_0^t\langle \mathrm df, 
\nu_{X_s}\rangle\, \mathrm dL_s\right]
&=\frac{1}{2}\int_0^t \lim_{\varepsilon\to 0}\frac{1}{\varepsilon}\E_P\left[\langle \mathrm df,
 \nu_{X_s}\rangle\ind_{X_s\in\partial^\varepsilon M}\right]\, \mathrm ds\\
&=\frac{1}{2} \int_0^t\lim_{\varepsilon\to 0}\frac{1}{\varepsilon} P_s(\langle \mathrm df, \nu\rangle\ind_{\partial^\varepsilon M})\, \mathrm ds\\
&=\frac{1}{2} \int_0^t \text{\underbar{$P$}}_s(\langle \mathrm df, \nu\rangle),
\end{align*}
where $\text{\underbar{$P$}}_s$ denotes 
the normalised surface measure associated to $P_s$. 
It follows that, for all~${t\in[0,1]}$,
\begin{equation}
\partial_t P_t(f) = P_t(\langle\stackrel{\hookrightarrow}{v}_t, \mathrm df\rangle + \frac{1}{2}\Delta f) + \text{\underbar{$P$}}_t(\langle \nu, \mathrm df\rangle) = P_t(\langle\stackrel{\hookleftarrow}{v}_t, \mathrm df\rangle - \frac{1}{2}\Delta f) - \text{\underbar{$P$}}_t(\langle \nu, \mathrm df\rangle).
\end{equation}
Since $P$ is a 
satisfies 
$P_t = \mu_t$ for every $t\in\TT$,
the proof is complete.
\end{proof}

\section{Proof of the existence on homogeneous spaces}\label{ex:sec.2}
\setcounter{equation}0
In this section we prove
 Theorem \ref{theo:Homogeneous}.
The proof
 is inspired by 
\cite{ACLZ} which is, in turn, inspired
by~\cite{Bre1}.
The idea is to find a path measure $Q$ of finite relative entropy and satisfying the marginal conditions. The candidate for such a measure is 
\begin{equation}\label{ex.E1}
Q=\int_{M^3}{R(\cdot |X_0=x, X_{1/2}=z, X_1=y)\, \sigma(\mathrm dx \mathrm dz \mathrm dy)},
\end{equation}
with $\sigma(\mathrm dx \mathrm dz \mathrm dy)= \pi(\mathrm dx \mathrm dy) vol(\mathrm dz)$ 
belongs to $\mathcal P(M^3)$. It extends the result in \cite{ACLZ} of existence on the torus, using the same property of invariance of the Brownian motion and the Riemannian volume, under isometries.

\begin{prop}[Constraints and entropy]
\label{prop:Q}
The path measure $Q$ satisfies the marginal and endpoint constraints
\[P_t=vol, \forall 
t \in [0,1]\quad
\mbox{ and } \quad P_{01}=\pi.\] In addition, 
if $H(\pi|vol \otimes vol)<\infty$ then $H(Q|R)<\infty$.
\end{prop}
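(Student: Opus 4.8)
The plan is to treat the constraints and the entropy bound separately, exploiting one structural fact throughout: $Q$ and the reference $R$ are both obtained by mixing \emph{the same} Markov bridge kernel $(x,z,y)\mapsto R(\cdot\mid X_0=x,X_{1/2}=z,X_1=y)$, differing only in the mixing law on $(X_0,X_{1/2},X_1)$ --- this law is $\sigma(\mathrm dx\,\mathrm dz\,\mathrm dy)=\pi(\mathrm dx\,\mathrm dy)\,vol(\mathrm dz)$ for $Q$ and, writing $p_s$ for the heat kernel of $M$ with respect to $vol$, it is $R_{0,1/2,1}(\mathrm dx\,\mathrm dz\,\mathrm dy)=p_{1/2}(x,z)\,p_{1/2}(z,y)\,vol(\mathrm dx)\,vol(\mathrm dz)\,vol(\mathrm dy)$ for $R$. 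Since $M$ is compact (and connected), $p_{1/2}$ is continuous and bounded above and below by positive constants, so these bridge laws make sense for every triple $(x,z,y)$ and $\log p_{1/2}$ is bounded; these two facts will make all the computations below finite and legitimate.

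For the constraints I would argue as follows. The endpoint identity is immediate: under the bridge $(X_0,X_1)=(x,y)$ a.s.\ and is independent of $z$, so $Q_{01}=\int_{M^3}\delta_{(x,y)}\,\sigma=\pi$; and $Q_{1/2}=vol$ because $X_{1/2}=z$ a.s.\ under the bridge, so $Q_{1/2}$ is just the middle marginal of $\sigma$. For $t\in(0,1/2)$ I would use the Markov property of $R$ to drop the conditioning on $X_1$, so that $R(X_t\in\cdot\mid X_0=x,X_{1/2}=z,X_1=y)=R(X_t\in\cdot\mid X_0=x,X_{1/2}=z)$, a quantity independent of $y$; integrating in $y$ against $\pi$ (whose first marginal is $vol$) and then in $z$ against $vol$ gives $Q_t(\mathrm dw)=u(w)\,vol(\mathrm dw)$ with
\[
u(w)=\int_{M\times M}\frac{p_t(x,w)\,p_{1/2-t}(w,z)}{p_{1/2}(x,z)}\,vol(\mathrm dx)\,vol(\mathrm dz).
\]
Homogeneity enters here and only here: for any isometry $g$, substituting $x\mapsto gx$, $z\mapsto gz$ (which preserves $vol$) and using $p_s(ga,gb)=p_s(a,b)$ shows $u\circ g=u$, hence $u$ is constant because the isometry group is transitive; and integrating $u$ over $M$ and applying Chapman--Kolmogorov $\int_M p_t(x,w)\,p_{1/2-t}(w,z)\,vol(\mathrm dw)=p_{1/2}(x,z)$ shows $\int_M u(w)\,vol(\mathrm dw)=1$, so $u\equiv1$ and $Q_t=vol$. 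The range $t\in(1/2,1)$ is symmetric and uses the second marginal of $\pi$.

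For the entropy, if $H(\pi|vol\otimes vol)=\infty$ there is nothing to prove, so assume $\pi\ll vol\otimes vol$ with density $\rho$, which forces $\sigma\ll R_{0,1/2,1}$. Because $Q$ and $R$ have the \emph{same} conditional law given $(X_0,X_{1/2},X_1)$, the density $\mathrm dQ/\mathrm dR$ is measurable with respect to $(X_0,X_{1/2},X_1)$ and equals $\frac{\mathrm d\sigma}{\mathrm dR_{0,1/2,1}}(X_0,X_{1/2},X_1)=\frac{\rho(X_0,X_1)}{p_{1/2}(X_0,X_{1/2})\,p_{1/2}(X_{1/2},X_1)}$ (equivalently, the chain rule for relative entropy along this disintegration, cf.\ \cite[Theorem C.3.1]{Dupuis}, has a vanishing conditional term). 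Hence
\[
H(Q|R)=H(\sigma\,|\,R_{0,1/2,1})=\int_{M^2}\log\rho(x,y)\,\pi(\mathrm dx\,\mathrm dy)\;-\;\int_{M^3}\log\!\big(p_{1/2}(x,z)\,p_{1/2}(z,y)\big)\,\pi(\mathrm dx\,\mathrm dy)\,vol(\mathrm dz),
\]
where the first term is exactly $H(\pi|vol\otimes vol)<\infty$ and the second, since $\log p_{1/2}$ is bounded and both marginals of $\pi$ equal $vol$, is the finite constant $-2\int_{M^2}\log p_{1/2}\,\mathrm d(vol\otimes vol)$. Thus $H(Q|R)<\infty$.

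I expect the only non-routine point to be the use of homogeneity to conclude that $u$ is constant --- on a general manifold $u$ need not be, which is precisely why the theorem restricts to homogeneous spaces; the remaining ingredient, the two-sided positivity of the heat kernel at a fixed positive time on a compact manifold, is standard and is what keeps every term above finite.
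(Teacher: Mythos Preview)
Your proof is correct and follows essentially the same approach as the paper's: use the Markov property to reduce the time-$t$ marginal to a two-point bridge, exploit isometry invariance coming from homogeneity, and reduce $H(Q|R)$ to $H(\sigma|R_{0,1/2,1})$ via the shared bridge disintegration together with the boundedness of $\log p_{1/2}$ on a compact manifold. The only cosmetic differences are that the paper concludes $Q_t=vol$ by showing the \emph{measure} $Q_t$ is isometry-invariant and invoking uniqueness of the invariant probability, whereas you show the \emph{density} $u$ is isometry-invariant (hence constant) and pin down the constant by Chapman--Kolmogorov; and for the entropy the paper applies the chain rule a second time to isolate $H(vol|R^{xy}_{1/2})$, while you expand the density of $\sigma$ with respect to $R_{0,1/2,1}$ directly---both routes land on the same finite correction term.
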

	
\begin{proof}
First, remark that, since $R$ is a Markov measure, we have
\begin{align*}
R(\cdot|X_0=x, X_{1/2}=z, X_1=y)=R(X_{[0,1/2]}&
\cdot |X_0=x, X_{1/2}=z)\\
&\times R(X_{[1/2,1]}\in \cdot| X_{1/2}=z, X_1=y).
\end{align*}
Now, let us check the endpoint constraints. 
For measurable
subsets $A$ and $B$ of $M$,
\begin{align*}
Q_{01}(A\times B)
&=Q(X_0\in A, X_1\in B)\\
&=\int_{M^3}R(X_0\in A|X_0=x, X_{1/2}=z)R(X_1\in B| X_{1/2}=z, X_1=y)\, 
\sigma(\mathrm dx \mathrm dz \mathrm dy)\\
&=\int_{M^3}\ind_A(x)\ind_B(y)\, 
\sigma(\mathrm dx \mathrm dz \mathrm dy)\\
&=\sigma(A\times M\times B)\\
&=\pi(A\times B).
\end{align*}
So $Q_{01}=\pi$.
Then, we prove that $Q_t$ is 
invariant 
under isometries
for all $t$. Let $t
\in [0,1/2]$ and
let $f$ be a bounded measurable function on $M$. We have 
\begin{align*}
\int_M f\, \mathrm dQ_t 
&=\int_{M^3}\E_R\left[
f(X_t)\,
\left|\,X_0=x, X_{1/2}=z \right.\right]\, \sigma(\mathrm dx \mathrm dz \mathrm dy)\\
&=\int_{M^2}\E_R\left[f(X_t)
\, \left|\,X_0=x, X_{1/2}=z
\right.\right] vol(\mathrm dx) vol(\mathrm dz).
\end{align*}
For every isometry $g$
of $M$, using the invariance in law of the Brownian motion and the invariance of the Riemannian volume
measure under isometry, we have 
that
\begin{align*}
\int_M f\circ g\, \mathrm dQ_t
&=\int_{M^2}\E_R\left[f\circ g(X_t)
\,\left|\,X_0=x, X_{1/2}=z
\right.\right]\, vol(\mathrm dx) vol(\mathrm dz)\\
&=\int_{M^2}\E_R\left[f(X_t)
\,\left|\,X_0=g(x), X_{1/2}=g(z)
\right.\right]\, vol(\mathrm dx) vol(\mathrm dz)\\
&=\int_{M^2}\E_R\left[f(X_t)
\,\left|\,X_0=x, X_{1/2}=z
\right.\right] vol(\mathrm dx) vol(\mathrm dz)\\
&=\int_M f\, \mathrm dQ_t. 
\end{align*} 
Since $vol$ is the unique 
probability
measure on $M$ that
is invariant
under isometries
(see, for instance, 
\cite[Proposition 476C]{FREM}),
we obtain $Q_t=vol$. 
The result is obtained, mutatis mutandis, for 
$t \in [1/2,1]$.
Then, $Q$ satisfies the marginal constraint.
		
We have now to prove
that $H(Q|R)<\infty$. Denote 
\begin{equation*}
Q_{0,1/2,1}=Q(X_0\in
\cdot, X_{1/2}\in \cdot, X_1\in \cdot)
\end{equation*} 
and $Q^{xzy}=Q(\cdot|X_0=x,X_{1/2}=z,X_1=y)$,
and similarly for $R$. We have,
by using 
the chain rule for the entropy
\cite[Theorem C.3.1]{Dupuis},
\begin{align*}
H(Q|R)
&= H(Q_{0,1/2,1}|R_{0,1/2,1}) + \int_{M^3}H(Q^{xzy}|R^{xzy})\, Q_{0,1/2,1}(\mathrm dx 
\mathrm dz \mathrm dy)\\
&= H(\sigma |R_{0,1/2,1})\\
&= H(\sigma_{01} |R_{01}) + \int_{M^2}H(\sigma^{xy}|R^{xy}_{1/2})\, \sigma_{01}(\mathrm dx \mathrm dy)\\
&= H(\pi |R_{01}) + \int_{M^2}H(vol|R^{xy}_{1/2})\, \pi(\mathrm dx \mathrm dy),
\end{align*}
where $R_{1/2}^{xy}$
is
the law at time $t=1/2$ 
of the Brownian bridge
between $x$ at time $t=0$
and~$y$ at time $t=1$.
By definition of the relative entropy, we have 
\begin{equation*}
H(vol|R^{xy}_{1/2})=\int_M \log
\left(\frac{\mathrm d vol}{\mathrm d 
R^{xy}_{1/2}}\right)\, vol(\mathrm dz) .
\end{equation*}
We denote by $p$ the heat kernel on $M$. We have 
\begin{equation*}
\frac{\mathrm d R^{xy}_{1/2}}{\mathrm d vol}(z)=\frac{p_{1/2}(x,z)p_{1/2}(z,y)}{p_1(x,y)}.
\end{equation*}
This quantity is continuous in $x$, $y$ and $z$. As $M$ is compact, the density can be bounded uniformly in the three variables. So the relative entropy~$H(Q|R)$ is finite if and only if~$H(\pi |R_{01})$ is finite
which, since the density
of $R_{01}$ with respect to $vol \otimes vol$
is continuous and strictly positive,
is equivalent to 
$H(\pi |vol \otimes vol)<\infty$.
\end{proof}

The homogeneity of $M$ 
seems to be important
to show that, at a fixed time, 
the law of the Brownian bridge between two independent uniformly distributed random variables is the uniform measure $vol$. It 
is not clear and it
would be interesting to understand if this holds or not on a non-homogeneous space. 

\begin{proof}[Proof of Theorem \ref{theo:Homogeneous}]
By Proposition \ref{prop:Q},
if the entropy of $\pi$ is finite,
there exists
a measure $Q$
that satisfies the constraints
of the problem \eqref{BS}
and has finite entropy with respect to $H$.
We conclude by the strict convexity
of the entropy and the convex constraints.

On the other hand, if $Q$ is the unique solution
then, in particular,
$Q_{01}$ has finite entropy with respect to
$R_{01}$. Since $R_{01}$
has a continuous and strictly positive density
with respect to $vol \otimes vol$, we also have
that 
$H(Q_{01}|vol \otimes vol)< \infty$.
\end{proof}

\section{Proof of the existence for quotient spaces}
\label{ex:sec.3}
\setcounter{equation}0
The goal of this section is to
prove
Theorem  \ref{th:ToQuotient}
and to give some examples of 
the existence of solutions to the Brenier-Schrödinger problem.
Theorem  \ref{th:ToQuotient}
describes
a relation between
the Brenier-Schrödinger problem
on compact Riemannian manifolds
and on some quotients of these.
For instance, 
we want to see the 
\textit{$n$-hypercube}
as a quotient of a
\textit{flat $n$-dimensional torus} (see Figure
\ref{fig:PathTorus} and~\ref{fig:PathRectangle}
where the reflections are along
the dotted lines) or
a positively curved \textit{$n$-ball}
as a quotient of the 
\textit{$n$-sphere}.

\begin{figure}[h]
\centering
\begin{minipage}{.5\textwidth}
  \centering
  \includegraphics[width=.9\linewidth]{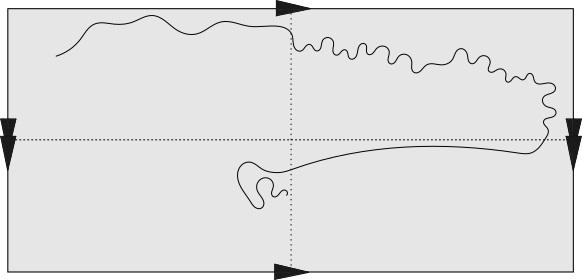}
  \captionof{figure}{A path in the torus.}
  \label{fig:PathTorus}
\end{minipage}%
\begin{minipage}{.5\textwidth}
  \centering
  \includegraphics[width=.9\linewidth]{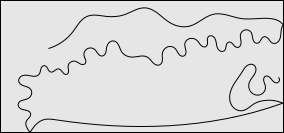}
  \captionof{figure}{The projected path in the rectangle.}
  \label{fig:PathRectangle}
\end{minipage}
\end{figure}

%

We begin by giving a proof
of the existence
of the Riemannian structure
on a quotient by a
reflection group. This
is similar to what happens
on $\mathbb R^n$
where the
theory of reflection
groups is well-known
(see, for instance, \cite{Hum}).

\begin{proof}[Proof of Lemma
\ref{lem:RiemannianMwithCorners}]

The topological structure of
$N =M/G$
is
induced by the quotient map
$q:M \to N$.
Given a reflection group $G$, we shall make of $N$ a manifold with
corners in the following way. 
Let $y \in N$ and take any
$x \in M$ with $q(x) = y$.

If $G_x = \{e\}$ then there exists
an open neighborhood
$V$ of $x$ such that 
${g  V \cap h V = \emptyset}$
for every $g \neq h$ in $G$. 
Since $q$ is open and $q|_V$ is injective and continuous we have that~${q|_V: V \to q(V)}$
is an homeomorphism and we 
can assume that (by taking
a smaller~$V$ if necessary) 
$V$ is diffeomorphic to an
open subset of $(0,\infty)^n$.
This gives an atlas to the open
set of points that can be written
as $q(x)$ with
$G_x = \{e\}$. We
can even define a metric
on this open set with the help
of these
$q|_V$.

If $G_x \neq \{e\}$ we consider the
exponential map
\[\exp_x:W \subset T_x M \to 
V \subset M\]
on an open neighborhood $W$
of $0 \in T_xM$ 
invariant under $\mathbb G_x$
such that $\exp_x|_W$
is a diffeomorphism onto its image
$V$.
Moreover, by choosing 
$V$ small enough we assume that~$g V \cap V = \emptyset$
for every $g \notin G_x$. 
Since 
\begin{equation}
\label{eq:isom}
g\exp_x(w) = 
\exp_x(\mathrm d g_x w)
\end{equation}
for $g \in G_x$ and $w \in T_xM$,
the open set $V$ is invariant under
$G_x$. Equation \eqref{eq:isom} tells
us that the action of
$G_x$ on $W$ (as $\mathbb G_x$)
is isomorphic to the action of 
$G_x$ on $V$. Then, we only need
to understand
\[W/\mathbb G_x.\]
But, since $\mathbb G_x$
is a reflection group, we know that
$T_x M /\mathbb G_x$ can be
identified
with a particular 
fundamental
domain of the action
of $\mathbb G_x$ on $T_x M$,
called closed chamber 
(see~\cite[Section~1.12]{Hum}),
and, in particular,
it has a structure
of a manifold with corners
so that~$W/\mathbb G_x$ inherits this
structure. Using $\exp_x$
we have given to the open set
$V/G_x \simeq 
q(V)$ the structure of a manifold
with corners. In fact, if
$C \subset T_xM$ is a closed chamber,
we have identified~${\exp_x(C \cap W)}$ with $q(V)$.
The latter identification
gives a Riemannian metric
to~$q(V)$
which is completely
characterised
by the isometric properties
required for
$q$.
 
\end{proof}

We will also need the following standard lemma 
whose proof we recall.

\begin{lemme}[Fundamental domain]
\label{lem:Distinguished}
Let $G$ be a finite group of
isometries of $M$.
Then, there exists an open subset $V$ of $M$
such that 
\begin{itemize}
\item $g V \cap h V = \emptyset$
for every $g \neq h$ in $G$ and
\item $vol\left(M\, 
\displaystyle \Bigg\backslash 
\bigcup_{g \in G} g V
\right) = 0 $ .
\end{itemize}
\begin{proof}
Let $x \in M$ such $G_x = \{e\}$
and define the set
\[V = \left\{y \in M: \,
\forall g \in G \setminus \{e\}, 
\, d(x,y)<
d(g x,y) \right\},\]
where $d$ is
the distance function on
the Riemannian manifold $M$.
Since $G$ is a group of isometries
we have that
\[h V = \left\{y \in M: \,
\forall g \in G\setminus \{h\}, \, d(h x,y)<
d(g x,y) \right\}.\]
We only need to see that,
for $\xi \neq \zeta$ in $M$, 
\[vol\left\{y \in M:\,
d(\xi,y) = d(\zeta,y)
\right\}=0.\]
This is
true since the map
$y \mapsto d(\xi,y) - d(\zeta,y)$
is smooth and regular
outside the cutlocus of $\xi$ 
and $\zeta$ and since every cutlocus
has $vol$-measure zero.

\end{proof}
\end{lemme}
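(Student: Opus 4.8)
I would realise $V$ as a Dirichlet (Voronoi) cell centred at a point with trivial stabiliser. The first step is therefore to produce a point $x \in M$ with $G_x = \{e\}$: for $g \in G \setminus \{e\}$ the fixed-point set $\mathrm{Fix}(g) = \{z \in M : g(z) = z\}$ is a closed totally geodesic submanifold of dimension $< n$ (it cannot be all of $M$, since an isometry of a connected Riemannian manifold that agrees with $\id$ on a nonempty open set is $\id$), hence it is $vol$-null, and the finitely many such sets cannot cover $M$. With such an $x$ fixed, I would set
\[
V = \bigl\{\, y \in M : d(x,y) < d(gx, y) \text{ for every } g \in G \setminus \{e\} \,\bigr\},
\]
where $d$ is the Riemannian distance on $M$.

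\textbf{Openness, disjointness and the description of $hV$.} Continuity of $d$ and finiteness of $G$ make $V$ open. Because $G$ acts by isometries, $y \in hV \iff h^{-1}y \in V \iff d(hx,y) < d(hgx,y)$ for all $g \ne e$, so, after the substitution $g' = hg$ (which ranges over $G \setminus \{h\}$),
\[
hV = \bigl\{\, y \in M : d(hx,y) < d(g'x,y) \text{ for every } g' \in G \setminus \{h\} \,\bigr\}.
\]
Note $gx \ne hx$ whenever $g \ne h$, since $gx = hx$ would give $g^{-1}h \in G_x = \{e\}$. If $y$ belonged to $gV \cap hV$ with $g \ne h$, then $d(gx,y) < d(hx,y)$ and $d(hx,y) < d(gx,y)$, a contradiction; this gives the first required property.

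\textbf{The full-measure property.} A point $y$ lies in no $gV$ exactly when the minimum of $g \mapsto d(gx,y)$ over $G$ is attained at two distinct elements, so
\[
M \setminus \bigcup_{g \in G} g V \subseteq \bigcup_{\substack{g,h \in G \\ g \ne h}} \bigl\{\, y \in M : d(gx,y) = d(hx,y) \,\bigr\}.
\]
It thus suffices to show that, for distinct points $\xi \ne \zeta$ of $M$, the bisector $B = \{\, y \in M : d(\xi,y) = d(\zeta,y) \,\}$ is $vol$-null. I would argue this on the complement of the $vol$-null closed set $\mathrm{Cut}(\xi) \cup \mathrm{Cut}(\zeta) \cup \{\xi,\zeta\}$ — using that the cut locus of a point of a compact Riemannian manifold is $vol$-null — where $f := d(\xi,\cdot) - d(\zeta,\cdot)$ is smooth. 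On $B$ the differential $df$ cannot vanish: $-\nabla d(\xi,\cdot)(y)$ and $-\nabla d(\zeta,\cdot)(y)$ are the unit initial velocities of the minimising geodesics from $y$ to $\xi$ and from $y$ to $\zeta$, and if they agreed at some $y \in B$ those two geodesics would coincide and reach $\xi$, respectively $\zeta$, at the same time $d(\xi,y) = d(\zeta,y)$, forcing $\xi = \zeta$. Hence $B$ meets this full-measure open set in a smooth hypersurface, which is $vol$-null, so $B$ itself is $vol$-null, and the proof is complete.

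\textbf{Expected main difficulty.} The only non-routine point is the last paragraph, the nullity of the bisector. I would base it on two classical facts of Riemannian geometry — that the cut locus of a point has measure zero, and that the distance function is smooth with unit gradient off the cut locus — together with the elementary geometric remark that equality of these gradients along the bisector forces the two base points to coincide. Everything else (existence of a point with trivial stabiliser, openness of $V$, the formula for $hV$, disjointness) is bookkeeping.
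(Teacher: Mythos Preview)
Your proof is correct and follows essentially the same approach as the paper: both construct $V$ as the Dirichlet (Voronoi) domain centred at a point $x$ with trivial stabiliser, obtain the description of $hV$ via the isometry property, and reduce the full-measure statement to the $vol$-nullity of bisectors $\{d(\xi,\cdot)=d(\zeta,\cdot)\}$, which is deduced from smoothness and regularity of $d(\xi,\cdot)-d(\zeta,\cdot)$ off the cut loci. You have simply filled in several details the paper leaves implicit --- the existence of $x$ with $G_x=\{e\}$, the explicit disjointness check, and the reason the differential does not vanish on the bisector.
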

Notice that, in particular,
for every $g \in G$ and 
$x \in gV$ the group
$G_x$ contains only the identity so that
$q|_{gV}$ is an isometry
onto its image.
There is an intuitive relation
between a Brownian motion
on $M$ and on its quotient by
a reflection group.

\begin{lemme}[Brownian motion under quotients]
\label{lem:QuotientBrownian}
Suppose that $G$ is a reflection
group of isometries of $M$.
Let $\{B^{x}_t\}_{t \geq 0}$
be a Brownian motion on $M$
starting at $x \in M$. Then
$\{q(B^{x}_t)\}_{t \geq 0}$
does not touch
the corner points almost surely and
\[\{q(B^{x}_t)\}_{t \geq 0}
\mbox{ is a reflected
Brownian motion on } 
 M/G.\]
 \begin{proof}
 The fact that
 $q(B_t^x)$ does not touch the corner points
 is a result of the following facts.
 The set $q^{-1}(\mathcal CN)$ is
 a finite union of submanifolds of
 dimension less or equal than~$n-2$ and
 the Brownian motion $B_t^x$ 
 almost surely
 does not touch submanifolds
 of dimension less or equal than $n-2$.

Now, for 
every $\varepsilon>0$, we consider
the $\varepsilon$-neighborhood
of the corner points,
 \[\mathcal N_\varepsilon =\left\{ x \in N : \, 
d(x,y)< \varepsilon \mbox{ for some } 
y \in \mathcal CN \right\}.\] 
Let $f:N \to \mathbb R$ be 
a smooth map such
that $\mathrm d f_x \nu_x = 0$
at every regular boundary
point $x$ and consider 
\[F = f \circ q\]
which can be seen
to be $C^2$ on $M\setminus q^{-1}(\mathcal CN)$. 
Let $\varepsilon>0$ and let
$F^\varepsilon:M \to \mathbb R$
be a $C^2$ function on $M$ that coincide
with $F$ outside of $\mathcal N_\varepsilon$.
 Then,
\[F^{\varepsilon}(B^x_t) - \int_0^t 
\Delta F^{\varepsilon}(B^x_s) \mathrm d s
\, \mbox{ is a martingale}\]
with
respect to the filtration
 $\left(B_s^x\right)_{s \in[0,t]}$ so that, if 
\[T_\varepsilon
= \inf \left\{t \geq 0:\, B^x_t \in 
q^{-1}\left(\mathcal N_\varepsilon\right)
 \right\},
\]
we have that
\[
F(B^x_{t \wedge T_\varepsilon} ) - 
\int_0^{t \wedge T_\varepsilon} 
\Delta F(B^x_s) \mathrm d s
\, \mbox{ is also a martingale}.
\]
By using that
\[\Delta F = 
\left(\Delta f \right) \circ q\]
we have proved that
\[f(q(B^x_{t\wedge T_\varepsilon})) - 
\int_0^{t\wedge T_{\varepsilon}} 
\Delta f(q(B^x_{s})) 
\mathrm d s 
\, \mbox{ is a martingale}\]
with
respect to the filtration
given by $\mathcal G_t=
\sigma(\left(B^x_s\right)_{s\in[0,t]})$. In particular,
since it is adapted
to the filtration given by
$\mathcal F_t =
\sigma(\left(q(B^x_s)\right)_{s\in[0,t]})$ and since
$\mathcal F_t \subset \mathcal G_t$,
it is also a martingale with
respect to this filtration.
Finally, since $q(B^x_t)$ does
not touch $\mathcal CN$, we can see 
that~${T_\varepsilon \uparrow \infty}$
as $\varepsilon \downarrow 0$
which completes the proof.

 \end{proof}

\end{lemme}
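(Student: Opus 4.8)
The plan is to verify directly that $\{q(B^x_t)\}$ solves the martingale problem that defines the reflected Brownian motion on $N$, and to establish the corner-avoidance statement first, since it is what justifies the localization in the main argument. Throughout I use the convention (as in the target statement) that the relevant generator is $\Delta$, so that a martingale problem means that $f(q(B^x_t)) - \int_0^t \Delta f(q(B^x_s))\,\mathrm ds$ should be a martingale for every admissible test function $f$.

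For the corner-avoidance, I would identify $q^{-1}(\mathcal CN)$ with the set of points $x \in M$ whose stabilizer $\mathbb G_x$ contains reflections in at least two linearly independent hyperplanes; the fixed locus of such a stabilizer has codimension at least $2$ in $M$. Since $G$ is finite, $q^{-1}(\mathcal CN)$ is a finite union of closed submanifolds of codimension $\geq 2$, and it is classical that Brownian motion on an $n$-dimensional manifold almost surely never meets a submanifold of codimension $\geq 2$ (such sets are \emph{polar}). Hence $q(B^x_t)$ avoids $\mathcal CN$ for all $t>0$ almost surely.

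For the martingale problem, fix a $C^2$ test function $f: N \to \mathbb R$ satisfying the Neumann condition $\mathrm d f_x \cdot \nu_x = 0$ at every regular boundary point, and set $F = f \circ q$. The heart of the argument is to show that $F$ is $C^2$ on $M \setminus q^{-1}(\mathcal CN)$ and that there $\Delta F = (\Delta f)\circ q$. Away from the fixed loci $q$ is a local isometry, so $F$ is $C^2$ and the Laplacian identity is immediate. Across a reflection hyperplane, I would use the local model furnished by Lemma~\ref{lem:RiemannianMwithCorners}: in suitable coordinates $q$ is the folding $x_1 \mapsto |x_1|$ onto $\{x_1 \geq 0\}$, so $F(x_1,x') = f(|x_1|,x')$. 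The first normal derivative of this even extension has a jump at $x_1=0$ proportional to $\partial_1 f(0,x')$, which vanishes \emph{exactly} by the Neumann condition; differentiating once more shows $F$ is $C^2$. This reflection principle is the main obstacle, and it is the only place where the boundary condition on $f$ enters. By continuity, $\Delta F = (\Delta f)\circ q$ persists across the hyperplanes.

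With $F$ understood, I would localize away from the corners. For $\varepsilon>0$ choose a $C^2$ function $F^\varepsilon$ on all of $M$ agreeing with $F$ outside $q^{-1}(\mathcal N_\varepsilon)$, where $\mathcal N_\varepsilon$ is the $\varepsilon$-neighborhood of $\mathcal CN$. Since $B^x$ is Brownian motion on the closed manifold $M$, the process $F^\varepsilon(B^x_t) - \int_0^t \Delta F^\varepsilon(B^x_s)\,\mathrm ds$ is a martingale; stopping at the hitting time $T_\varepsilon$ of $q^{-1}(\mathcal N_\varepsilon)$ and substituting $F$ and $(\Delta f)\circ q$ for $F^\varepsilon$ and $\Delta F^\varepsilon$ on $[0,T_\varepsilon]$, I obtain that $f(q(B^x_{t\wedge T_\varepsilon})) - \int_0^{t\wedge T_\varepsilon}(\Delta f)(q(B^x_s))\,\mathrm ds$ is a martingale for the Brownian filtration, hence also for the coarser filtration generated by $q(B^x)$, to which it is adapted. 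Finally, by the corner-avoidance already established, $T_\varepsilon \uparrow \infty$ as $\varepsilon \downarrow 0$, and passing to the limit (using continuity together with the uniform bound on $\Delta f$ over the compact $N$) recovers the unstopped martingale property. This is precisely the defining property of the reflected Brownian motion on $N$, which completes the proof.
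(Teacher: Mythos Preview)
Your proof is correct and follows essentially the same route as the paper: corner-avoidance via polarity of codimension-$\geq 2$ sets, then the martingale problem via $F = f\circ q$, localization by a $C^2$ modification $F^\varepsilon$, stopping at $T_\varepsilon$, the identity $\Delta F = (\Delta f)\circ q$, passage to the coarser filtration of $q(B^x)$, and finally $T_\varepsilon \uparrow \infty$. The only difference is that you supply an explicit justification (via the local folding model and the Neumann condition) for the claim that $F$ is $C^2$ across the reflection hyperplanes, which the paper simply asserts.
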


In the rest of this section and
for notational simplicity we
denote by $\sigma$, instead of~$vol$, the normalised volume measure
on $M$, and by $\tilde \sigma$,
the normalised volume measure
on~$N = M/G$.
Let $R$ be the law
of the Brownian motion on $M$
whose initial position
has law~$\sigma$
and let~$\tilde R$ be the law
of the reflected 
Brownian motion on $N$
whose initial position
has law $\tilde \sigma$.
We have the following
result.

\begin{lemme}[Image of the reversible
Wiener measure]
Denote by $q(R)$ the image
measure of $R$ by the map  induced
by $q$ from $C([0,1],M)$ to $C([0,1],N)$.
Then,
\[q(R) = \tilde R.\] 

\begin{proof}
By Lemma \ref{lem:QuotientBrownian},
$q(R)$ is the law of the Brownian
motion on $N$ whose
initial position
is distributed according to
$q_*\sigma$,
the image measure of $\sigma$
by $q$. It is enough,
then, to notice that
 $q_* \sigma = \tilde \sigma$.
 By Lemma \ref{lem:Distinguished}, 
 $\sigma
= 
\sum_{g\in G} \sigma|_{g V}$, so that
\[ q_* \sigma
= q_* \left( 
\sum_{g\in G} \sigma|_{g V}\right) 
=
\sum_{g\in G} q_*  
\left(\sigma|_{g V} \right).\]
We have that
\begin{equation}
\label{eq:MeasureComplement}
\tilde \sigma
\left(q(M \setminus \cup_{g \in G} 
\, \,g V)
\right)
=0.
\end{equation}
since the measure of $\partial N$ 
is zero and, on the
complement of
$q^{-1}(\partial N)$, 
the map $q$ is smooth
so that the image
of a set of measure zero
has also measure zero.
Since $q|_{g V}$ is an isometry
onto its image we have that
\[q_*  
\left(\sigma|_{g V} \right)
=
\mbox{ volume measure on }
N,\]
where we have used 
\eqref{eq:MeasureComplement}
which says that
$\tilde \sigma \left(
N \setminus q(gV) \right)
=0$. We obtain
\[q_* \sigma 
= \mbox{card}(G)
\left(\mbox{volume measure on }
N\right)\]
which, after normalising, concludes
the proof.
\end{proof}
\end{lemme}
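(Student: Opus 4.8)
The plan is to characterise $q(R)$ as the law of a reflected Brownian motion on $N$ and thereby reduce everything to the equality of initial distributions $q_*\sigma=\tilde\sigma$. First I would write $R=\int_M R^x\,\mathrm d\sigma(x)$ as the $\sigma$-mixture of the single-start laws $R^x$. Since the pushforward by the map induced by $q$ on path space commutes with this mixing, and since Lemma~\ref{lem:QuotientBrownian} shows that each $q(R^x)$ is the law of the reflected Brownian motion on $N=M/G$ started at $q(x)$, it follows that $q(R)$ is the law of the reflected Brownian motion on $N$ whose initial position has law $q_*\sigma$. By definition $\tilde R$ is the reflected Brownian motion on $N$ started from $\tilde\sigma$, and the martingale problem defining the reflected Brownian motion determines the full path law once the initial law is fixed. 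Hence it suffices to prove $q_*\sigma=\tilde\sigma$.

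For the latter I would test against an arbitrary bounded measurable $f:N\to\R$, writing $\mu_M$ and $\mu_N$ for the unnormalised Riemannian volumes on $M$ and $N$, so that $\sigma=\mu_M/\mu_M(M)$ and $\tilde\sigma=\mu_N/\mu_N(N)$. Using the fundamental domain $V$ from Lemma~\ref{lem:Distinguished} and the remark following it that $q|_{gV}$ is a volume-preserving isometry onto its image, I would decompose $M$ over the disjoint translates $gV$, which cover $M$ up to a null set, to obtain $\int_M (f\circ q)\,\mathrm d\mu_M=\sum_{g\in G}\int_{gV}(f\circ q)\,\mathrm d\mu_M$. Because $q$ collapses each orbit to a point, every image $q(gV)$ equals the single open set $q(V)$, and the isometry $q|_{gV}$ turns each summand into $\int_{q(V)} f\,\mathrm d\mu_N$; as $q(V)$ exhausts $N$ up to a null set this is $\int_N f\,\mathrm d\mu_N$. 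Hence $\int_M(f\circ q)\,\mathrm d\mu_M=\mathrm{card}(G)\int_N f\,\mathrm d\mu_N$, so that $\int_N f\,\mathrm d(q_*\sigma)=\frac{\mathrm{card}(G)}{\mu_M(M)}\int_N f\,\mathrm d\mu_N$. This exhibits $q_*\sigma$ as a constant multiple of $\mu_N$, and since $q_*\sigma$ is a probability measure it must be the normalised volume $\tilde\sigma$.

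The substance of the argument lies in the null-set bookkeeping, which I expect to be the only delicate point. I would first record that $\partial N$ is $\tilde\sigma$-negligible: its preimage $q^{-1}(\partial N)$ is a finite union of submanifolds of positive codimension, hence $\sigma$-null, and on the complement of $q^{-1}(\partial N)$ the map $q$ is a smooth local diffeomorphism, so it sends $\sigma$-null sets to $\tilde\sigma$-null sets. This justifies both that the decomposition of $\sigma$ over $\bigcup_{g}gV$ loses nothing and that replacing $q(V)$ by $N$ in the final integral is harmless. With these facts in hand the remaining steps — linearity of the pushforward and the change of variables under the isometries $q|_{gV}$ — are routine, and the normalisation constant can be left implicit since both $q_*\sigma$ and $\tilde\sigma$ are probability measures.
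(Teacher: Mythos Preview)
Your proof is correct and follows essentially the same route as the paper: reduce to the equality of initial laws via Lemma~\ref{lem:QuotientBrownian}, then use the fundamental domain from Lemma~\ref{lem:Distinguished} and the fact that $q|_{gV}$ is an isometry to identify $q_*\sigma$ with $\tilde\sigma$, with the same boundary null-set argument. The only cosmetic difference is that you phrase the measure identification via test functions while the paper works directly with the pushforward of $\sigma|_{gV}$.
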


We are ready to give the proof of
Theorem \ref{th:ToQuotient}.

\begin{proof}[Proof of Theorem
\ref{th:ToQuotient}]
As in the previous lemma,
we use the notation $\sigma$, instead of
$vol$, for the normalised volume measure
on $M$, and denote by $\tilde \sigma$,
the normalised volume measure
on $N = M/G$.

Let us prove
the first assertion
of Theorem \ref{th:ToQuotient}. 
Let $Q$ be a probability
measure on~$C([0,1],M)$
such that 
$Q_{01}=\pi$, $Q_t = \sigma$ for
every~${t \in [0,1]}$
and $H(Q|R) < \infty$.
We need to find 
a probability measure
$\tilde Q$ on $C([0,1],N)$
such that
$\tilde Q_{01}=(q\times q)_*\pi$, $\tilde Q_t 
= \tilde \sigma$ for
every~$t \in [0,1]$
and $H(\tilde Q|\tilde R) < \infty$.
Notice that
\[H(q(Q)| q(R))
\leq H(Q|R) < \infty.\]
Since $q(Q)$ satisfies
the marginal assumptions and since
$q(R) = \tilde R$, the proof
is completed by taking
$\tilde Q = q(Q)$.

Now, to prove
the second assertion we need to
write
$\tilde \pi$ as $q_* \pi$
for some nice $\pi$.
For this, we shall use Lemma 
\ref{lem:Distinguished}.
Since $H(\tilde \pi|\tilde R_{01}) 
< \infty$
we have that 
$H(\tilde \pi|\tilde \sigma \otimes 
\tilde \sigma) 
< \infty$. In particular,~$\tilde \pi$ gives
measure zero to
$N \times N
\setminus q(U) \times q(U)$.
For every $(g,h) \in G\times G$,
consider the map
\[
\left(q|_{gU} \times q|_{hU} 
\right)^{-1}:q(U) \times q(U)
\to gU \times hU\]
and consider the measure
\[\pi_{g,h} = 
\left(q|_{gU} \times q|_{hU} 
\right)^{-1}_* \tilde \pi
\]
which satisfies
\[(q \times q)_* \pi_{g,h}=  
\tilde \pi,\]
Nevertheless,
it does not satisfy
the marginal conditions.
Notice that, if
\[\alpha_{g,h} = 
\left(q|_{gU} \times q|_{hU} 
\right)^{-1}_* 
\left(\tilde \sigma \times
 \tilde \sigma \right)
 \quad
 \mbox{ and }
 \quad
 \sigma_g = \left(q|_{gU}\right)^{-1}_*
 \tilde \sigma
\]
then
\[\alpha_{g,h} = |G|^2
\left(\sigma \times \sigma \right)
|_{gU \times hU} \quad
\mbox{ and }
\quad
\sigma_g = |G|\, \sigma|_{gU}.
\]
Moreover, the first marginal
of $ \pi_{g,h}$
is $\sigma_g$ and its
second marginal is 
$\sigma_h$. Then, if
we define
\[\pi =\frac{1}{|G|^2}
\sum_{(g,h) \in G \times G}
\pi_{g,h}
\]
we may notice that
the first and second
marginals of $\pi$
are $\sigma$ and that
\[(q \times q)_* \pi
= \tilde \pi.
\]
We can also find its entropy
by integrating and obtain
that
\[H(\pi|\sigma \otimes \sigma)=
H(\tilde \pi|\tilde \sigma
\otimes \tilde \sigma).\]
Since $H( \pi|\sigma
\otimes \sigma)< \infty$
if and only if
$H( \pi| R_{01})< \infty$
and
$H(\tilde \pi|\tilde \sigma
\otimes \tilde \sigma)< \infty$
if and only if~$H(\tilde \pi| \tilde R_{01})< \infty$
we may conclude.

\end{proof}

We consider now some
simple examples of
quotient spaces where
Theorem \ref{th:ToQuotient} holds.
Almost all of these
will be quotients of
the flat two-dimensional torus
which we define now.
Let
$u$ and $v$ be two
linearly independent vectors of $\mathbb R^2$.
We will denote
by $\mathbb T_{u,v}$
the manifold
\[\mathbb T_{u,v} = 
\mathbb R^2 / \{au+bv:
a,b \in \mathbb Z\}\]
endowed
with the Riemannian metric
induced by $\mathbb R^2$.
We begin by describing
two examples that are actual
two-dimensional 
manifolds with boundary
(without corners).

\begin{example}[Cylinder]
Suppose that $u$ and $v$ are 
orthogonal. The map
\begin{align*}
\{\alpha u + \beta v:
(\alpha,\beta) \in [0,1]^2\}
&\to \{\alpha u + \beta v:
(\alpha,\beta) \in [0,1]^2\}\\
xu + yv &\mapsto xu + (1-y)v
\end{align*}
induces an isometry
of $\mathbb T_{u,v}$ and the quotient
space is isometric to the cylinder
\[\left\{z \in \mathbb C:
\, 2\pi |z| =|u|\right\} \times 
[0,|v|/2].\]
\end{example}

\begin{example}[Flat Möbius strip]

Suppose
that $|u| = |v|$. The map
\begin{align*}
\{\alpha u + \beta v:
(\alpha,\beta) \in [0,1]^2\}
&\to \{\alpha u + \beta v:
(\alpha,\beta) \in [0,1]^2\}\\
xu + yv &\mapsto yu + xv
\end{align*}
induces an isometry
of $\mathbb T_{u,v}$ and
the quotient space is isometric
to the flat Möbius strip
\[[0,\|u+v\|/2] \times [0,\|u-v\|/2]
/\sim\]
where $\sim$ is the identification
of the vertical sides in opposite
directions.
Figure \ref{fig:TorusAndMobius}
shows a representation
of the torus and the 
considered isometry
is the reflection along
the dotted diagonal.
Figure \ref{fig:Mobius}
shows the canonical
representation of the flat Möbius
strip as part
of (four times) the 
representation of the torus.

\end{example}

\begin{figure}[h]
\centering
\begin{minipage}{.5\textwidth}
  \centering
  \includegraphics[width=1\linewidth]{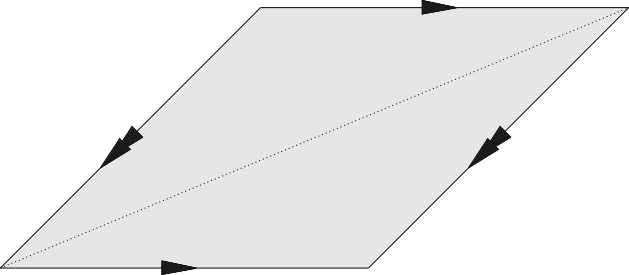}
  \captionof{figure}{The Möbius strip as a quotient.}
  \label{fig:TorusAndMobius}
\end{minipage}%
\begin{minipage}{.5\textwidth}
  \centering
  \includegraphics[width=1\linewidth]{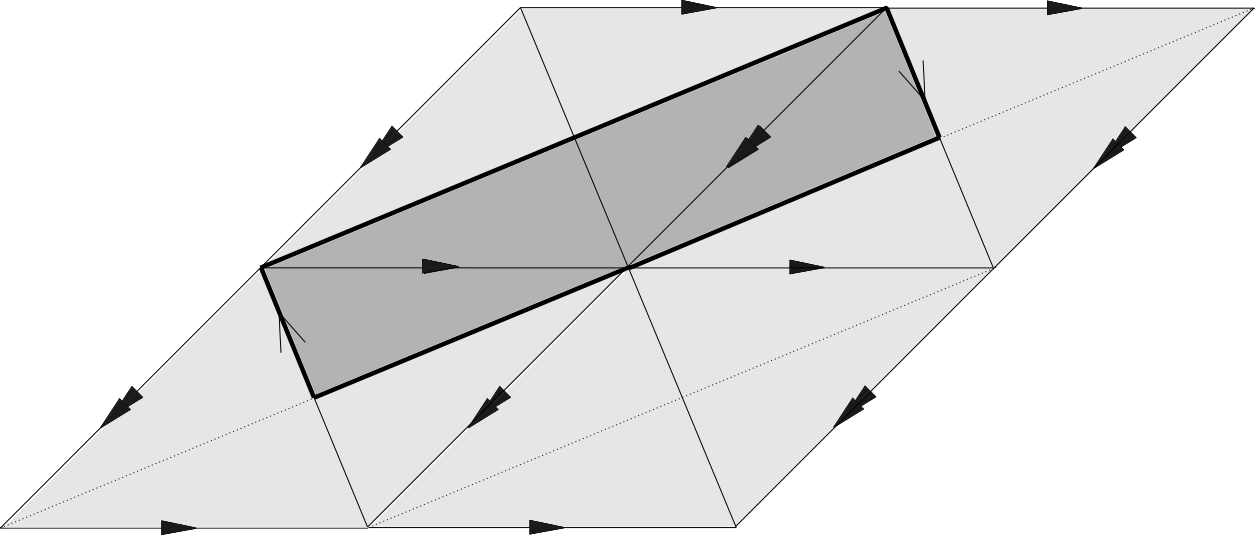}
  \captionof{figure}{The Möbius strip.}
  \label{fig:Mobius}
\end{minipage}
\end{figure}

The next four examples are 
two-dimensional manifolds 
with corners.

\begin{example}[Rectangle]
Suppose that $u$ and $v$ are 
orthogonal. The maps
\begin{align*}
\{\alpha u + \beta v:
(\alpha,\beta) \in [0,1]^2\}
&\to \{\alpha u + \beta v:
(\alpha,\beta) \in [0,1]^2\}\\
xu + yv &\mapsto xu + (1-y)v
\end{align*}
and
\begin{align*}
\{\alpha u + \beta v:
(\alpha,\beta) \in [0,1]^2\}
&\to \{\alpha u + \beta v:
(\alpha,\beta) \in [0,1]^2\}\\
xu + yv &\mapsto (1-x)u + yv
\end{align*}
generate a reflection group 
of isometries of 
$\mathbb T_{u,v}$ and the quotient
space is isometric to
\[[0,|u|/2]\times 
[0,|v|/2].\]
\end{example}

\begin{figure}[h]
\centering
  \includegraphics[width=.6\linewidth]{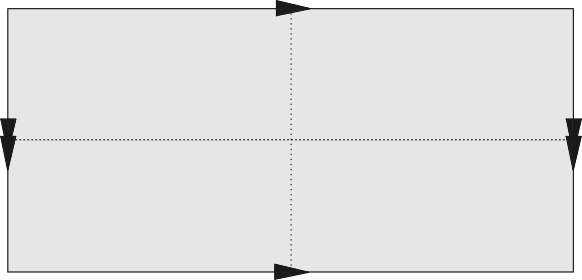}
  \captionof{figure}{A
  rectangle
	 as a quotient
  of the torus.}
  \label{fig:TorusAndRectangle}
\end{figure}

\begin{example}[Isosceles right triangle]

A $45^\circ$ right triangle can be seen
as a quotient of a square
by a reflection along
its diagonal. Using the previous
example, we can also 
see it as a quotient of a torus
(see Figure \ref{fig:TorusAndTriangleRectangle}).

\end{example}

\begin{figure}[h]
\centering
  \includegraphics[width=.4\linewidth]{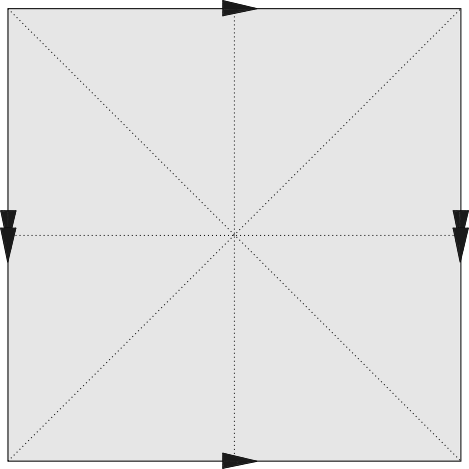}
  \captionof{figure}{A $45^\circ$
  triangle rectangle as a quotient.}
  \label{fig:TorusAndTriangleRectangle}
\end{figure}

\newpage

\begin{example}[Equilateral
triangle]

If $2 u\cdot v = \|u\| \|v\|$, the
torus $\mathbb T_{u,v}$
can be seen as a quotient of
an hexagon identifying
opposite sides as in Figure
\ref{fig:TorusAndTriangle}. 
Then, if we consider
the group generated by the
reflections along
the dotted lines in Figure
\ref{fig:TorusAndTriangle}
we can obtain
an equilateral triangle
as a quotient space.

\begin{figure}[h]
\centering
  \includegraphics[width=.4\linewidth]{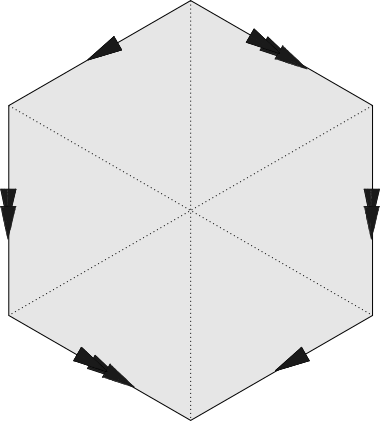}
  \captionof{figure}{An equilateral
  triangle as a quotient
  of the torus.}
  \label{fig:TorusAndTriangle}
\end{figure}

\end{example}

\begin{example}[$60^\circ$ right
triangle]
A $60^\circ$ right triangle
can be seen as a quotient
of the equilateral triangle
by a reflection. Using the previous
example we can see it also
as a quotient
of a torus.

\end{example}

Finally, as $n$-dimensional cases
we consider the following examples.

\begin{example}[$n$-hyperrectangle]
Let $a_1,\dots,a_n > 0$
and let 
$u_1,\dots,u_n$ be orthogonal
vectors in $\mathbb R^n$ such that
$\|u_i\| = a_i$ for any
$i \in \{1,\dots,n\}$.
 We may
consider the flat $n$-dimensional
torus
\[\mathbb T^n = 
\mathbb R^n / \{m_1 u_1 +
\dots + m_n u_n:
m_1,\dots,m_n \in \mathbb Z\}\]
and the group generated by
the reflections
induced by the family 
(indexed by $i \in \{1,\dots,n\}$)
of maps
\begin{align*}
\{\alpha_1 u_1 + \dots
+ \alpha_n u_n:
\alpha_i \in [0,1]\}
&\to 
\{\alpha_1 u_1 + \dots
+ \alpha_n u_n:
\alpha_i \in [0,1]\}\\
\sum_{k=1}^n x_k u_k
&\mapsto 
\sum_{k \neq i} x_k u_k
+ (1-x_i) u_i.
\end{align*}
The quotient of $\mathbb T^n$
by this group is a $n-hyperrectangle$
with lengths
$a_1/2,\dots,a_n/2$.

\end{example}

\begin{example}[Curved n-ball]

Consider the $n$-dimensional
sphere
\[\mathbb S^n =
\{(x_1,\dots,x_{n+1})
\in \mathbb R^n:\,
|x_1|^2 + \dots + |x_{n+1}|^2 = 1\}.\]
The quotient of $\mathbb S^n$
by the map
\begin{align*}
\mathbb S^n
&\to 
\mathbb S^n\\
(x_1,\dots,x_n,x_{n+1})
&\mapsto 
(x_1,\dots,x_n,-x_{n+1})
\end{align*}
is a curved $n$-ball.
\end{example}

\section{Proof of the existence
for the Gaussian case}\label{ex:sec.4}
\setcounter{equation}0
 We consider the following path measure 
\begin{equation*}
Q=\int_{M^3}{R(\cdot|X_0=x, X_{1/2}=z, X_1=y)\, \pi(\mathrm dx \mathrm dy)\gamma_{1/4}(\mathrm dz)},
\end{equation*} 
where $\gamma_{\sigma^2}$ denotes the density of $\NN(0,\sigma^2 \id)$. This measure is 
the analogue of \eqref{ex.E1}. 
\begin{prop}[Constraints and entropy: Gaussian case]
\label{prop:QGaussian}
The measure $Q$ satisfies the endpoints and marginal constraints~${Q_{01}=\pi}$ and $Q_t=\NN(0,1/4 \id)$ for every~${t\in [0,1]}$. If $H(\pi|R_{01})<\infty$ then $H(Q|R)<\infty$.
\end{prop}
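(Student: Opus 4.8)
The plan is to mimic the proof of Proposition~\ref{prop:Q}, with the homogeneity argument replaced by an explicit Gaussian bridge computation. Note first that, for the problem \eqref{E.gamma} to be feasible at all, $\pi$ must have both marginals equal to $\NN(0,1/4\,\id)$, and this is assumed throughout. As in Proposition~\ref{prop:Q}, the Markov property of $R$ factors the conditional bridge law at time $1/2$,
\[R(\cdot\,|\,X_0=x,X_{1/2}=z,X_1=y)=R(X_{[0,1/2]}\in\cdot\,|\,X_0=x,X_{1/2}=z)\times R(X_{[1/2,1]}\in\cdot\,|\,X_{1/2}=z,X_1=y),\]
and integrating out $z$ against the probability measure $\gamma_{1/4}$ gives $Q_{01}=\pi$, exactly as before.

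For the marginal constraint, take $t\in[0,1/2]$ and a bounded measurable $f$. By the Markov property, $\E_R[f(X_t)\,|\,X_0=x,X_{1/2}=z,X_1=y]$ does not depend on $y$, so $Q_t(f)=\int_{M^2}\E_R[f(X_t)\,|\,X_0=x,X_{1/2}=z]\,\gamma_{1/4}(\mathrm dx)\,\gamma_{1/4}(\mathrm dz)$ since the first marginal of $\pi$ is $\gamma_{1/4}$. Under $R$, conditioned on $X_0=x$ and $X_{1/2}=z$, the variable $X_t$ has the Brownian bridge law $\NN\big((1-2t)x+2tz,\,(t-2t^2)\,\id\big)$; averaging over independent $x,z\sim\gamma_{1/4}$ and adding variances yields an isotropic Gaussian of variance $\frac{(1-2t)^2+(2t)^2}{4}+(t-2t^2)=1/4$, whence $Q_t=\NN(0,1/4\,\id)$. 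The case $t\in[1/2,1]$ is handled symmetrically, conditioning on $(X_{1/2},X_1)$ and using that the second marginal of $\pi$ is $\gamma_{1/4}$.

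For the entropy, I would apply the chain rule \cite[Theorem C.3.1]{Dupuis} along the disintegration by $(X_0,X_{1/2},X_1)$: the conditional law of $Q$ given these three points is by construction the bridge law $R(\cdot\,|\,X_0=x,X_{1/2}=z,X_1=y)$, so the conditional term vanishes and $H(Q|R)=H(Q_{0,1/2,1}\,|\,R_{0,1/2,1})$. A second chain rule along $(X_0,X_1)$, together with $Q_{0,1/2,1}(\mathrm dx\,\mathrm dz\,\mathrm dy)=\pi(\mathrm dx\,\mathrm dy)\,\gamma_{1/4}(\mathrm dz)$ and $R(X_{1/2}\in\cdot\,|\,X_0=x,X_1=y)=\NN\big((x+y)/2,\,1/4\,\id\big)$, gives
\[H(Q|R)=H(\pi|R_{01})+\int_{M^2}H\big(\NN(0,1/4\,\id)\,\big|\,\NN((x+y)/2,\,1/4\,\id)\big)\,\pi(\mathrm dx\,\mathrm dy).\]
The integrand is the relative entropy of two isotropic Gaussians with common covariance $\frac14\id$ and means $0$ and $(x+y)/2$, hence equals $\|x+y\|^2/2$; integrating against $\pi$ and using $\|x+y\|^2\le 2\|x\|^2+2\|y\|^2$ together with the fact that both marginals of $\pi$ are $\NN(0,1/4\,\id)$ bounds the last term by $\int(\|x\|^2+\|y\|^2)\,\pi(\mathrm dx\,\mathrm dy)=n/2<\infty$. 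Thus $H(\pi|R_{01})<\infty$ implies $H(Q|R)<\infty$.

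The only substantive new ingredient compared with Proposition~\ref{prop:Q} is the Gaussian bridge identity $\frac{(1-2t)^2+(2t)^2}{4}+(t-2t^2)=1/4$, which plays exactly the role of the statement that a Brownian bridge between two independent uniform endpoints on a homogeneous space has uniform time-$t$ marginal; I do not expect any genuine obstacle, the rest being a routine transcription (including the elementary formula for the mean and variance of a Brownian bridge over $[0,1]$ evaluated at $t=1/2$).
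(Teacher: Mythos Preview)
Your proof is correct and follows essentially the same route as the paper: Markov factorisation for the endpoint constraint, the explicit Gaussian bridge variance identity for the marginal constraint, and the two-step chain rule for the entropy bound. The only visible discrepancy is that you obtain $H(\gamma_{1/4}\,|\,R^{xy}_{1/2})=\tfrac12\|x+y\|^2$ whereas the paper writes $\tfrac12|x-y|^2$; your formula is actually the correct one (the paper's density $\exp\big(2\langle z,x+y\rangle-\tfrac12|x-y|^2\big)$ should read $-\tfrac12|x+y|^2$ in the exponent), and in any case both expressions are bounded by $\|x\|^2+\|y\|^2$ and give the same final estimate $n/2$.
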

			
\begin{proof}
The steps and arguments of the proof are the same as in Section \ref{ex:sec.2}. Firstly, as in the proof of Proposition \ref{prop:Q}, the endpoint condition 
$Q_{01}=\pi$ is obviously satisfied. Then, for 
$t \in [0,1/2]$, we have 
\begin{equation*}
Q_t =\int_{M^2}{R_t(\cdot|X_0=x, X_{1/2}=z)\, \gamma_{1/4}(dx)\gamma_{1/4}(dz)}
\end{equation*}
where $R_t(\cdot|X_0=x, X_{1/2}=z)$ is the law, at time $t$ of a Brownian bridge on $[0,1/2]$ between~$x$ and $z$. It is a normal distribution $\NN\left((1-2t)x+2tz, t(1-2t)\right)$. So $Q_t$ is a normal distribution and we have that
\begin{equation*}
(1-2t)Y+2tZ+\sqrt{t(1-2t)}W \sim Q_t,
\end{equation*} 
where $Y$,$Z\sim\NN(0,1/4 \id)$ and $W\sim\NN(0,\id)$ are independent random variables. It follows that $Q_t=\NN(0,1/4\id)$ for all $t \in [0,1/2]$ and for all $t \in [0,1]$ with the same argument.
				
It remain to verify the entropy condition. As in the symmetric space case, we have
\begin{equation*}
H(Q|R) = H(\pi|R_{01}) + \int_{M}{H(\gamma_{1/4}|R^{xy}_{1/2})\, \pi(dxdy)}.
\end{equation*}
Using the heat kernel in $M$, we have :
\begin{equation*}
\frac{d R^{xy}_{1/2}}{d\gamma_{1/4}}(z)=e^{2\langle z,x+y\rangle-\frac{1}{2}|x-y|^2}.
\end{equation*}
And then, the entropy is 
\begin{equation*}
H(\gamma_{1/4}|R^{xy}_{1/2})= \frac{1}{2}|x-y|^2
\end{equation*}
So we have that
\begin{align*}
H(Q|R) 
&\leq H(\pi|R_{01})  +\int_{M^2}\frac{1}{2}|x-y|^2\,\pi(dxdy)\\
&\leq H(\pi|R_{01})  +\int_{M^2}(x^2+y^2)\,\pi(dxdy)\\
&\leq H(\pi|R_{01})  +2\int_{M}x^2\,\gamma_{1/4}(dx)\\
&\leq H(\pi|R_{01}) + \frac{n}{2}
\end{align*}
which completes the proof
\end{proof}
					

\begin{proof}[Proof of Theorem \ref{theo:Gaussian}]
It follows the proof of
Theorem \ref{theo:Homogeneous} but now using
Proposition \ref{prop:QGaussian} instead
of Proposition \ref{prop:Q}.
\end{proof}

 \section*{Acknowledgements}

This work
has been benefited
by conversations
with Marc Arnaudon
and Michel Bonnefont.
DGZ was supported by the French
  ANR-16-CE40-0024 SAMARA project
  and also would like to 
  thank the hospitality of Université de Bordeaux. 
  

\bibliographystyle{plain}
\bibliography{biblio2}

\noindent
\small{
Institut de Mathématiques de Marseille; CNRS; Aix-Marseille Université, Marseille, France.\\ 
URL : \url{https://davidgarciaz.wixsite.com/math}\\
\textit{E-mail address}: \texttt{david.garcia-zelada@univ-amu.fr}}

\bigskip

\noindent
\small{
Institut de Mathématiques de Bordeaux, UMR CNRS 5251, Université de Bordeaux, France\\
URL : \url{https://www.math.u-bordeaux.fr/~bhuguet/}\\
\textit{E-mail address}: \texttt{baptiste.huguet@math.u-bordeaux.fr}}	
\end{document}